\newcommand{\filtbk}{{\boldsymbol{\eta}}}
\newcommand{{\fmt}}{{F$\mfd$T}}
\def\ps@pprintTitle{%
 \let\@oddhead\@empty
 \let\@evenhead\@empty
 \def\@oddfoot{}%
 \let\@evenfoot\@oddfoot}
\begin{document}

\begin{frontmatter}

\title{Tight framelets and fast  framelet filter bank transforms on manifolds}

\author[addressCityU]{Yu Guang Wang\fnref{fn0}}
\ead{yuguang.wang@unsw.edu.au}
\author[addressCityU]{Xiaosheng Zhuang\fnref{fn0}\corref{corau}}
\ead{xzhuang7@cityu.edu.hk}

\address[addressCityU]{Department of Mathematics, City University of Hong Kong, Tat Chee Avenue, Kowloon Tong, Hong Kong}

\cortext[corau]{Corresponding authors.}

\fntext[fn0]{The research  of X. Z. and Y. G. W., and the work described in this paper was partially supported by a grant from the Research Grants Council of the Hong Kong Special Administrative Region, China (Project No.  CityU 11300717) and grants from City University of Hong Kong (Project No.: 7200462 and 7004445).}

\begin{abstract}
Tight framelets on a smooth and compact Riemannian manifold $\mfd$ provide a tool of multiresolution analysis for data from  geosciences, astrophysics, medical sciences, etc. This work investigates the construction, characterizations, and applications of tight framelets on such a manifold $\mfd$.  Characterizations of the tightness of a sequence of framelet systems for $\Lpm{2}$ in both the continuous and semi-discrete settings are provided. Tight framelets associated with framelet filter banks on $\mfd$ can then be easily designed and fast framelet filter bank transforms on $\mfd$ are shown to be realizable with nearly linear  computational complexity. Explicit construction  of tight framelets on the sphere $\sph{2}$ as well as numerical examples are given.
\end{abstract}

\begin{keyword}
tight framelets \sep affine system \sep compact Riemannian manifold \sep quadrature rule \sep filter bank \sep FFT \sep fast spherical harmonic transform \sep Laplace-Beltrami operator \sep unitary extension principle
\MSC[2010]{42C15, 42C40, 42B05, 41A55, 57N99, 58C35, 94A12, 94C15, 93C55, 93C95}
\end{keyword}

\end{frontmatter}

\section{Introduction and motivation}
\label{sec:intro}

In the era of information technologies, the rapid development of modern high-tech devices, for example, a super computer, PC, smart phone, wearable and VR/AR device, is driven internally by  Moore's Law \cite{wiki_Moore} which contributes to the exponential growth of the computational power, while externally stimulated by the tremendous need of both the public and individual parties in processing massive data from finance, economy, geology, bio-information, cosmology, medical sciences and so on. It has been noticed that Moore's Law is slowing down due to the constrains of the physical law \cite{wiki_IntelTickTock} but the volume of data is dramatically increasing. Dealing with Big Data is becoming a crucial part of an individual person, party, government and country.

Real-world data often inherit high-dimensionality such as data from a surveillance system, seismology, climatology. High-dimensional data are typically concentrated on a low-dimensional manifold \cite{RoSa2000,TeDeLa2000}, for instance, the sphere in remote sensing and CMB data \cite{Bennett_etal2013}, more complex surfaces in brain imaging \cite{ThTo1996}, and discrete graph data from social and traffic networks \cite{ShWiHoVa2015}.
Analysis and learning tools on manifolds hence play an increasingly important role in machine learning and statistics.

The key to successful manifold learning lies in that data on a manifold may exhibit high complexity on one hand while they are highly sparse at a certain domain via an appropriate multiscale representation system on the other hand. Sparsity within such representations, stemming from computational harmonic analysis, enables efficient analysis and processing of high-dimensional and massive data.


Multiresolution  analysis  in general are designed for data in the Euclidean space $\Rd$, $d\ge1$, for example, a signal in $\Rone$, an image in $\Rd[2]$ and a video in $\Rd[3]$. Multiscale representation systems in $\Rd$ including wavelets, framelets, curvelets, shearlets, etc., which are capable of capturing the sparsity of data, have been well-developed and widely used, see e.g. \cite{BoKuZh2015,CaDeDoYi2006,Chui1992,CoDaVi1993,Daubechies1992,Mallat2009, Meyer1990}.  The core of the classical framelet (and wavelet) construction relies on the extension principles such as unitary extension principle (UEP) \cite{RoSh1997}, oblique extension principle (OEP) and mixed extension principle (MEP) \cite{DaHaRoSh2003}. The extension principles associate framelet systems with filter banks, which enables fast algorithmic realizations for the framelet transforms and applications, see e.g. \cite{DaHaRoSh2003,HaZhZh2016,Mallat2009}. The fast algorithms that include the filter bank decomposition and reconstruction of a representation system which uses convolution and FFT achieve computational complexity in proportion to the size of the input data (up to a log factor).

Different from on Euclidean domains, multiscale representation systems and their corresponding fast algorithmic realizations on a general compact manifold are less studied. One of the reasons is that the operators of translation and dilation for classical wavelet and framelet systems in $\R^d$ can not be in parallel extended to general manifolds. We have to look for alternative approaches. One possible approach is based on the central idea  behind wavelet analysis on $\Rd$: the time domain operators have  their equivalences in the Fourier domain. The tight framelet construction on a manifold of this paper, which uses orthogonal polynomials and localized kernels, is closely related to this approach. The main idea is that a sequence of orthogonal polynomials plays the role of a Fourier basis and can be used to define a  localized kernel from which ``translation'' and ``dilation'' can be obtained. Such an approach can be seen in Fischer, Mhaskar and Prestin in \cite{FiPr1997, MhPr2004}, where they show that wavelets or polynomial frames can be extended to general domains including intervals and  spheres. Coifman, Maggioni, Mhaskar and Dong \cite{CoMa2006,Dong2017,MaMh2008,Mhaskar2010} consider more general cases, for which diffusion wavelets, diffusion polynomial frames and wavelet tight frames on manifolds and graphs are constructed.

Besides orthogonal polynomials and localized kernels on $\mfd$, our characterization and construction of tight framelets on $\mfd$ also rely on (nonhomogeneous) affine systems
\[
\mathsf{AS}_J(\{\varphi; \psi^1,\ldots,\psi^r\}) =\{\varphi_{J,k} \setsep k\in\mathcal{I}_J\}\cup\{\psi^n_{j,k} \setsep k\in\mathcal{J}_j, n = 1,\ldots,r, j\ge J \}, \quad J\in\Z,
\]
where $\{\varphi;\psi^1,\ldots,\psi^n\}$ is a set of generators and the subscripts $j$ and $k$ encode certain ``dilation'' and ``translation'' information with $\mathcal{I}_j,\mathcal{J}_j$ being the index sets at scale $j$.
In the classical wavelet analysis, the wavelets or framelets $\varphi_{j,k}:=2^{j/2}\varphi(2^j\cdot-k)$ and $\psi^n_{j,k}:=2^{j/2}\psi^n(2^j\cdot-k)$, $k\in\Z$ in $\Rone$ are defined by dilation and translation associated with a set $\{\varphi;\psi^1,\ldots,\psi^r\}$ of generators in $L_2(\R)$.
One of the fundamental problems in classical wavelet analysis is to  construct an affine system $\mathsf{AS}_J(\{\varphi; \psi^1,\ldots,\psi^r\})$ that will form an orthonormal basis, a Riesz basis or a frame for $\Lpm{2}$.
In the frame theory, such a system is called a \emph{framelet system}, the elements of which are called \emph{framelets}. \emph{Tight framelets} refer to elements of a framelet system with equal lower and upper frame bounds. See \cite{Daubechies1992,DaHaRoSh2003}.

The construction of affine systems of wavelets in $\Rd$ has been studied in \cite{RoSh1997}. Sequences of affine systems are studied in \cite{Han2010, Han2012} and later extended to affine shear systems in \cite{HaZh2015,Zhuang2015,Zhuang2016}. Han \cite{Han2010, Han2012} shows that the sequences of affine systems are of fundamental importance in the analysis and construction of framelet systems, for example, in the MRA, the filter bank structure and the extension principles \cite{ChHeSt2002,DaHaRoSh2003,RoSh1997}. More discussions refer to \cite{ChHeSt2002, DaHaRoSh2003,Han2010,Han2012,HaZh2015, RoSh1997,Zhuang2016} and references therein.
Adopting the framework of sequences of affine systems \cite{Han2010,Han2012} and the approach of orthogonal polynomials in \cite{Dong2017,FiPr1997,MhPr2004}, we show that a sequence   of tight frames  for $L_2(\mfd)$, called \emph{continuous tight framelet (system)} $\cfrsys(\Psi;\mfd)$, can be constructed based on a framelet generating set $\Psi=\{\scala;\scalb^1,\ldots,\scalb^r\}$ on $\Rone$ and an orthonormal eigen-pair  $\{(\eigvm,\eigfm)\}_{\ell=0}^\infty$ on $\mfd$. See Section~\ref{subsec:cont_framelets} for details.

For computation and application, we discretize the continuous tight framelets by using a sequence of polynomial-exact quadrature rules $\mathcal{Q}:=\{\QN\}_{j\ge J}$ on $\mfd$. This leads to a simple approach of constructing \emph{(semi-discrete) tight framelets} $\frsys(\Psi,\QQ;\mfd)$ for $\Lpm{2}$. We show that if the framelet generating set $\Psi=\{\scala;\scalb^1,\ldots,\scalb^r\}$ is associated with a filter bank $\filtbk=\{\maska;\maskb[1],\ldots,\maskb[r]\}$, see \eqref{eq:refinement}, the characterization conditions of $\filtbk$ for the tightness of semi-discrete framelets on $\mfd$ are greatly simplified, which  facilitates the design and application of the tight framelets.

By exploiting the refinement structure for the filters in \eqref{eq:refinement} and the properties of the tight frame $\frsys(\Psi,\QQ;\mfd)$, we can design the \emph{framelet filter bank decomposition algorithm}
and the \emph{framelet filter bank reconstruction algorithm}, where
the decomposition uses (discrete) convolutions with filters in the filter bank $\filtbk$ and downsampling operations, and the reconstruction uses convolutions and upsampling operations.
Figure~\ref{fig:algo:fb} depicts one-level decomposition and reconstruction at scale $j$. Since convolution is equivalent with discrete Fourier transforms on $\mfd$, the decomposition and reconstruction can be implemented \emph{fast} using fast discretet Fourier transforms (FFTs) on $\mfd$. We then call the decomposition and reconstruction \emph{fast framelet filter bank transforms} ({\fmt}s). The (multi-level) {{\fmt}} algorithms are recursive one-level framelet filter bank transforms, see Section~\ref{sec:fast.algo} for details. The {\fmt}s provide a tool for efficient multiscale data analysis on $\mfd$.

\begin{figure}[htb]
\begin{minipage}{\textwidth}
\centering
\begin{center}
\begin{tikzpicture}[nonterminal/.style={rectangle, minimum size=6mm, very thick, draw=red!50!black!50,top color=white, bottom color=red!50!black!20,font=\itshape},
terminal/.style={rectangle,minimum size=6mm,rounded corners=1mm,very thick,draw=black!50,top color=white,bottom color=black!20,font=\ttfamily},
sum/.style={circle,minimum size=1mm,very thick,draw=black!50,top color=white,bottom color=black!20,font=\ttfamily},
skip loop/.style={to path={-- ++(0,#1) -| (\tikztotarget)}},
hv path/.style={to path={-| (\tikztotarget)}},
vh path/.style={to path={|- (\tikztotarget)}},
,>=stealth',thick,black!50,text=black,
every new ->/.style={shorten >=1pt},
graphs/every graph/.style={edges=rounded corners}]
\matrix[row sep=1mm,column sep=6mm] {
& & \node (ma1) [terminal] {$\dconv\maska^\star$}; & \node (ds1) [terminal] {$\downsmp \hspace{-0.5mm}$}; & \node (pr1) [nonterminal] {processing}; & \node (us1) [terminal] {$\upsmp \hspace{-0.5mm} $}; & \node (ma2) [terminal] {$\dconv\maska$}; & & \\
\node (in) [nonterminal] {input}; & \node (p1) [coordinate] {}; & & & & & &\node (plus) [sum] {$+_{r}$}; & \node (out) [nonterminal] {output};\\
& & \node (mb1) [terminal] {$\dconv(\maskb[n])^\star$}; & & \node (pr2) [nonterminal] {processing}; & & \node (mb2) [terminal] {$\dconv\maskb[n]$}; & & \\
};

\graph [use existing nodes] {
ma1 -> ds1 -> pr1 -> us1 -> ma2;
mb1 -> pr2 -> mb2;
in -- p1;
p1 ->[vh path]{ ma1, mb1 };
{ma2, mb2 } -> [hv path] plus -> out;
};
\end{tikzpicture}
\end{center}
\vspace{-1mm}
\begin{minipage}{0.8\textwidth}
\caption{One-level framelet filter bank decomposition and reconstruction based on a filter bank $\{\maska;\maskb[1],\ldots,\maskb[r] \}$ at scale $j$. Here the filters $\maskb[n]$ range over $n=1,\ldots,r$ and the node $+_r$ sums over the low-pass filtered coefficient sequence and all $r$ high-pass filtered coefficient sequences.}
\label{fig:algo:fb}
\end{minipage}
\end{minipage}
\end{figure}


Before we proceed to detail the construction of continuous and semi-discrete tight framelets on $\mfd$ and their discretization in Section~\ref{sec:tight.framelets}, we state the major contributions of the paper in the following aspects.
\begin{enumerate}[(1)]
\item \emph{Sequences of framelet systems on a manifold.} Most of literature on frames and tight frames on manifolds only consider a fixed system $\frsys[0](\Psi,\QQ;\mfd)$ with two framelet generators, i.e. $\Psi=\{\scala;\scalb\}$, see e.g. \cite{MaMh2008, Mh2005, NaPeWa2006-2}. As far as we are concerned, there is no literature on the investigation of a sequence $\{\frsys(\Psi,\QQ;\mfd)\setsep J\ge {\ord[0]}\}$ of framelet systems on a compact Riemannian manifold $\mfd$ for some ${\ord[0]}\in\Z$ and for $\Psi=\{\scala;\scalb^1,\ldots,\scalb^r\}$ with multiple framelet generators.
    In this paper, we introduce sequences of framelet systems on a manifold and provide a complete characterization (equivalence conditions) of a sequence of framelet systems to be a sequence of tight frames in $\Lpm{2}$, which greatly simplifies the construction of tight framelets on $\mfd$. Moreover, with the flexible number of framelet generators, one can  separate the ``frequency domain'' in a more careful way that enables more sophisticated data analysis on different ``frequency'' ranges  (see Examples~\ref{ex:1} and \ref{ex:2}), which  are important in application, such as denoinsing or inpainting on a manifold.

\item \emph{MRA structure and filter banks association.} From the equivalence relations in Theorem~\ref{thm:framelet.tightness}, a sequence of tight frames for $\Lpm{2}$ has a multiresolution (MRA) structure for $\Lpm{2}$. The MRA structure is then naturally associated with a filter bank, which helps to design a fast realization of the framelet transforms on $\mfd$. We should point out that the papers \cite{CoMa2006, LeMh2008,MaMh2008,Mhaskar2010} focus on the characterization with respect to $\Psi=\{\scala;\scalb\}$ and with no filter bank associated. Dong \cite{Dong2017} considers $\cfrsys[0](\Psi;\mfd)$ with FIR (finite impulse response) filter banks whose masks have fully supported  Fourier series, which makes it impossible to involve polynomial-exact quadrature rules on $\mfd$ for discretization. In the paper, we provide a complete characterization of a sequence of tight framelets for $\Lpm{2}$ in terms of the associated filter bank in both the FIR and IIR (infinite impulse response or band-limited) cases, and also demonstrate that using band-limited filter banks enables the discretization of the continuous framelets via polynomial-exact quadrature rules and the efficient implementation of the framelet filter bank transforms.

\item \emph{Unitary extension principle and quadrature rules on a manifold.} The equivalence conditions of (iv) and (v) in Theorem~\ref{thm:framelet.tightness} for a sequence of tight framelets in $\Lpm{2}$ in terms of the associated framelet generators $\Psi=\{\scala;\scalb^1,\ldots,\scalb^r\}$ and the associated filter bank $\filtbk=\{\maska;\maskb[1],\ldots,\maskb[r]\}$ provide a \emph{new} unitary extension principle (UEP) for $\Lpm{2}$, which is a non-trivial generalization of classical unitary extension principle \cite{DaHaRoSh2003, RoSh1997} for $\Lpm[\R]{2}$. The conditions \eqref{thmeq:fr.scal.j.j1} and \eqref{thmeq:fr.tightness.a.mask.cond} are new as far as we are concerned. These two equivalence conditions not only simplify the construction of tight framelets for $\Lpm{2}$, but also give the connection of tight framelets with quadrature rules for numerical integration on a compact Riemannian manifold.

\item \emph{Fast framelet filter bank transforms on manifolds.} The fast algorithmic realization for framelet filter bank transforms on a general compact Riemannian manifold is new as far as we are concerned. Assuming FFT on $\mfd$, which holds for many important manifolds including torus, sphere and Grassmannian, we demonstrate that the fast framelet transforms on a manifold proposed in this paper have (up to a log factor) the linear computational complexity and the low redundancy rate (or the low data complexity). The computational complexity and the redundancy rate are both in proportion to the size of the input data, and are independent of the decomposition level. We remark that we focus on fast algorithms on smooth manifolds rather than on graphs, which is another important problem to explore. A smooth manifold and a graph have a fundamental difference although the latter can be embedded into a smooth Riemannian manifold: a smooth manifold has nice geometric properties with explicitly known form of orthonormal systems which can be exploited for the design of fast discrete Fourier transforms; a graph only has the topological structure (see e.g. \cite{Si2006}) and the analysis heavily relies on the spectral graph theory \cite{Ch1997}. Dong \cite{Dong2017} and Hammond et al. \cite{HaVaGr2011} studied the algorithms of wavelet transforms (WFTG and SGWT) for graph data based on spectral graph theory. However, as their transforms have no downsampling process, the redundancy rate and the computational complexity increase exponentially with respect to the decomposition level.

\end{enumerate}

The remaining of the paper is organized as follows. In Section~\ref{sec:tight.framelets}, we provide a complete characterization for a sequence of framelet systems  to be a sequence of tight frames in $\Lpm{2}$ in both the continuous and semi-discrete scenarios. We show that polynomial-exact quadrature rules on $\mfd$ give a simple way of constructing semi-discrete tight framelets in $\Lpm{2}$. In Section~\ref{sec:fast.algo}, for tight framelets associated with a filter bank and a sequence of polynomial-exact quadrature rules on $\mfd$, we describe the multi-level framelet filter bank decomposition and reconstruction algorithms. We give fast framelet filter bank transforms ({\fmt}s) with nearly linear computational  complexity and low redundancy rate based on the fast algorithms for discrete Fourier transforms (FFTs) on $\mfd$. In Section~\ref{sec:fmtS2} we construct framelets on the sphere $\sph{2}$ with two high passes ($\maskb[1]$ and $\maskb[2]$).
Section~\ref{sec:numer} gives numerical examples for the {\fmt} algorithms on $\sph{2}$ using the nonequispaced fast spherical Fourier transforms (NFSFTs) of Keiner, Kunis and Potts \cite{KeKuPo2007}. Final remarks  are given in the last section.

\section{Tight framelets on manifolds}
\label{sec:tight.framelets}
In this section, we give a complete characterization for a sequence of  framelet systems to be a sequence of continuous tight framelets for $\Lpm{2}$ and show that the discretization of continuous tight framelets using quadrature rules can achieve semi-discrete tight framelets for $\Lpm{2}$.

Throughout the paper, we assume that the manifold $\mfd$ has the following properties.
\begin{enumerate}[(1)]
\item The manifold $\mfd$ is a $d$-dimensional compact, connected, and smooth Riemannian manifold with smooth boundary (possibly empty) for $d\ge2$  equipped with a probability measure $\memf$ $(\memf(\mfd)=1)$.  The space $\Lpm{2}:=\Lpm[\mfd,\memf]{2}$ is the space of complex-valued square integrable functions on $\mfd$ with respect to $\memf$ endowed with the $L_{2}$-norm $\norm{f}{\Lpm{2}}:=\left(\int_{\mfd}|f(\PT{x})|^{2}\dmf{x}\right)^{1/2}$ for $f\in \Lpm{2}$. Note that $\Lpm{2}$ is a Hilbert space with inner product $\InnerL{f,g}:=\InnerL[\Lpm{2}]{f,g}:=\int_{\mfd}f(\PT{x})\conj{g(\PT{x})}\dmf{x}$, $f,g\in\Lpm{2}$, where $\conj{g}$ is the complex conjugate to $g$.

\item $\{\eigfm\}_{\ell=0}^\infty$ and $\{\eigvm\}_{\ell=0}^\infty$ are two seqeunces. The  sequence $\{\eigfm\}_{\ell=0}^\infty\subset \Lpm{2}$ is an orthonormal basis for $\Lpm{2}$ with $\eigfm[0]\equiv1$; i.e. $\InnerL{\eigfm[\ell],\eigfm[\ell']} = \delta_{\ell,\ell'}$, where $\delta_{\ell,\ell'}$ is the \emph{Kronecker delta} with $\delta_{\ell,\ell'}=1$ if $\ell=\ell'$ and $0$ otherwise, and the sequence $\{\eigvm\}_{\ell=0}^\infty\subset\R$ is a nondecreasing sequence of nonnegative numbers satisfying $0=\eigvm[0]\le \eigvm[1]\le \cdots$ and $\lim_{\ell\rightarrow\infty} \eigvm = \infty$. The sequence  $\{(\eigfm,\eigvm)\}_{\ell=0}^\infty$ is said to be an \emph{orthonormal eigen-pair} for $\Lpm{2}$. A typical example of $\{(\eigfm,\eigvm)\}_{\ell=0}^\infty$ is the set of pairs of the eigenfunctions and eigenvalues of the \emph{Laplace-Beltrami operator} $\LBm$ on $\mfd$ satisfying $\LBm\eigfm = -\eigvm^{2}\eigfm$ for $\ell\in\N_0:=\N\cup\{0\}$.
\end{enumerate}

Since $\{(\eigfm,\eigvm)\}_{\ell=0}^\infty$ is an orthonormal eigen-pair for $\Lpm{2}$, the (generalized) \emph{Fourier coefficients} $\Fcoem{f}, \ell\in\N_0$ of a function $f\in\Lpm{2}$ can be defined to be  $\Fcoem{f} := \InnerL{f,\eigfm}$, $\ell\in\N_0$.
Then any function  $f\in \Lpm{2}$  has the Fourier expansion $f=\sum_{\ell=0}^\infty\Fcoem{f}\eigfm$ in $\Lpm{2}$ and Parsevel's identity  $\|f\|_{\Lpm{2}}^2 = \sum_{\ell=0}^{\infty} |\Fcoem{f}|^2$ holds.

To construct framelets on $\mfd$, we let
\[
\Psi:=\{\scala; \scalb^1,\ldots,\scalb^r\}\subset L_1(\R),
\]
a set of generating functions, or \emph{(framelet) generators},
where $L_1(\R)$ is the space of absolutely integrable functions  on $\Rone$ with respect to the Lebesgure measure. The \emph{Fourier transform} $\FT{\gamma}$ of a function $\gamma\in \Lpm[\R]{1}$ is $\FT{\gamma}(\xi):=\int_{\R}\gamma(t)e^{-2\pi i t\xi} \IntD{t}$, $\xi\in\Rone$ (with abuse of notation). The Fourier transform on $\Lpm[\R]{1}$ can be naturally extended to the $L_{2}$ space $\Lpm[\R]{2}$ of square integrable functions on $\Rone$. As wavelets and framelets in $\Rd$, the set of generators $\Psi$ is associated with  a \emph{(framelet) filter bank}
\[
\filtbk:=\{\maska; \maskb[1],\ldots,\maskb[r]\}\subset l_1(\Z):=\{h=\{h_k\}_{k\in\Z}\subset \C \setsep \sum_{k\in\Z} |h_k|<\infty \}
\]
by the following relation:
\begin{equation}
\label{eq:refinement}
    \FT{\scala}(2\xi) = \FS{\maska}(\xi)\FT{\scala}(\xi),\quad
    \FT{\scalb^n}(2\xi) = \FS{\maskb[n]}(\xi)\FT{\scala}(\xi),\quad n=1,\ldots,r, \; \xi\in\Rone,
\end{equation}
where for a \emph{filter (or mask)} $\mask=\{\mask_k\}_{k\in\Z}\subset\C$, the \emph{Fourier series} $\FT{h}$ is defined to be the $1$-periodic function $\FT{\mask}(\xi):=\sum_{k\in\Z}\mask_k e^{-2\pi i k\xi}$, $\xi\in\R$. Again, we abuse the ``hat'' notation, but one can easily tell the difference of Fourier coefficients $\Fcoem{f}$, Fourier transform $\FT{\gamma}$ and Fourier series $\FT{h}$ from the context.  The first equation in \eqref{eq:refinement} is said to be the \emph{refinement equation} with $\scala$ being the \emph{refinable function} associated with the \emph{refinement mask} $\maska$ (or \emph{low-pass filter} in electrical engineering). The functions $\scalb^n$ are framelet generators associated with \emph{framelet masks} (or \emph{high-pass filters}) $\maskb[n]$, $n=1,\ldots,r$, which can be derived via  extension principles \cite{DaHaRoSh2003,RoSh1997}.

In this paper, the symbols $f, g, u, q, \boldsymbol{\varphi},\boldsymbol{\psi}$ are reserved for functions defined on $\mfd$, the  symbols
$\alpha,\beta,\gamma$ are for functions on $\R$, the symbols $a,b,h$ are for filters (masks), and
$\fracoev[],\frbcoev[]{}$ in Seciton~\ref{sec:fast.algo} are for framelet coefficient sequences.
\subsection{Continuous framelets}\label{subsec:cont_framelets}

In this subsection, we define continuous framelet systems and give some equivalence conditions of a sequence of continuous framelet systems to be a sequence of tight frames in $L_2(\mfd)$.

Maggioni and Mhaskar \cite[Theorem 4.1]{MaMh2008} proved that when the associated filter function $\gamma$ has regularity depending on some constant $s_{1}$, $s_{2}>0$, the kernel
\begin{equation}\label{eq:kernel.K}
	K_{\gamma,N}(\PT{x},\PT{y}):=\sum_{\ell=0}^\infty \gamma\left(\frac{\eigvm}{N}\right) \conj{\eigfm(\PT{y})}\eigfm(\PT{x})
\end{equation}
is well-localized:
\begin{equation}\label{kernel:decay}
|K_{\gamma,N}(\PT{x},\PT{y})|\le \frac{c\: N^{s_1}}{\max\{1,(N\rho(\PT{x},\PT{y}))^{s_2}\}},
\end{equation}
where $s_1$ and $s_2$ satisfy $0<s_{1}<s_{2}$, the constant $c$ depends only on $\gamma$ and the manifold $\mfd$ itself, and $\rho:\mfd\times \mfd\rightarrow\R$ is a quasi-metric on $\mfd$. The inequality \eqref{kernel:decay} means that the kernel $K_{\gamma,N}(\cdot,\PT{y})$ is localized around a fixed $\PT{y}\in\mfd$ as a function of the first argument: the larger $N$, the more concentrated $K_{\gamma,N}(\cdot,\PT{y})$ around $\PT{y}$. This localized kernel in \eqref{eq:kernel.K} can then be used to define ``dilation''  and ``translation'' of a function on $\mfd$.

For $j\in\Z$ and $\PT{x},\PT{y}\in\mfd$, the \emph{continuous framelet elements} $\cfra(\PT{x})$ and $\cfrb{n}(\PT{x})$ on $\mfd$ at scale $j$ are the \emph{filtered Bessel kernels} (or summability kernels, reproducing kernels, Mercer kernels, see e.g. \cite{BrDiSaSlWaWo2014,MaMh2008,Zhou2003}), given
by
\begin{equation}\label{eq:intro.cfr}
\begin{aligned}
    \cfra(\PT{x}) := & K_{\FT{\scala},2^j}(\PT{x},\PT{y})=   \sum_{\ell=0}^{\infty} \FT{\scala}\left(\frac{\eigvm}{2^{j}}\right)\conj{\eigfm(\PT{y})}\eigfm(\PT{x}),&\\
    \cfrb{n}(\PT{x}) := &K_{\FT{\scalb^n},2^j}(\PT{x},\PT{y})=  \sum_{\ell=0}^{\infty} \FT{\scalb^n}\left(\frac{\eigvm}{2^{j}}\right)\conj{\eigfm(\PT{y})}\eigfm(\PT{x}),&\quad n = 1,\ldots,r.
\end{aligned}
\end{equation}
The framelet elements $\cfra(\PT{x})$ and $\cfrb{n}(\PT{x})$ correspond to the ``dilation'' operation at scale $j$ and the ``translation'' at a point $\PT{y}\in\mfd$ of wavelets in $\Rd$. The \emph{continuous framelet system} $\cfrsys(\Psi):=\cfrsys(\Psi;\mfd)$ on $\mfd$ (starting at a scale $J\in\Z$) is then a (nonhomogeneous) affine system \cite{Han2010,Han2012} given by
\begin{equation}\label{eq:wav.sys}
    \cfrsys(\Psi) = \mathsf{CFS}_J(\{\scala;\scalb^{1},\dots,\scalb^{r}\}) := \{\cfra[\ord,\PT{y}] \setsep \PT{y}\in\mfd\}\cup\{\cfrb{1},\dots,\cfrb{r} \setsep \PT{y}\in\mfd, \:j\ge \ord\}.
\end{equation}
The continuous framelet system $\cfrsys(\Psi)$ is said to be a \emph{(continuous) tight frame} for $\Lpm{2}$ if $\cfrsys(\Psi)\subset\Lpm{2}$ and if, in $L_2$ sense,
\begin{equation}\label{eq:intro.cfr.tight.f}
    f = \int_{\mfd} \InnerL{f,\cfra[\ord,\PT{y}]}\cfra[\ord,\PT{y}] \dmf{y} + \sum_{j=\ord}^{\infty}\sum_{n=1}^{r} \int_{\mfd}\InnerL{f,\cfrb{n}}\cfrb{n} \dmf{y}\quad \forall f\in \Lpm{2},
\end{equation}
or equivalently,
\begin{equation}\label{eq:intro.cfr.tight.coeff}
\norm{f}{\Lpm{2}}^{2} = \int_{\mfd} \bigl|\InnerL{f,\cfra[\ord,\PT{y}]}\bigr|^{2} \dmf{y} + \sum_{j=\ord}^{\infty}\sum_{n=1}^{r} \int_{\mfd}
\bigl|\InnerL{f,\cfrb{n}}\bigr|^{2}\dmf{y}\quad \forall f\in\Lpm{2}.
\end{equation}
The elements in $\cfrsys(\Psi)$ are said to be \emph{(continuous) tight framelets} for $L_2(\mfd)$. We also say $\cfrsys(\Psi)$ (continuous) tight framelets if no confusion arises, similar to the treatment for ``classical wavelets'', see \cite{Daubechies1992,DaHaRoSh2003}.

The following theorem gives equivalence conditions of a sequence $\{\cfrsys(\Psi)\}_{\ord=\ord[0]}^{\infty}$ of continuous framelet systems in \eqref{eq:wav.sys} to be a sequence of  tight frames for $\Lpm{2}$.

 \begin{theorem}\label{thm:cfr.tightness}
Let $J_0\in\Z$ be an integer and $\Psi:=\{\scala;\scalb^1,\ldots,\scalb^r\}\subset\Lpm[\R]{1}$ with $r\ge1$ be a set of framelet generators associated with a filter bank $\filtbk:=\{\maska; \maskb[1],\ldots, \maskb[r]\}\subset l_1(\Z)$ satisfying \eqref{eq:refinement}. Define continuous framelet system $\cfrsys(\Psi), J\ge J_0$ as in  \eqref{eq:wav.sys} with framelets $\cfra$ and $\cfrb{n}$ in \eqref{eq:intro.cfr}. Suppose $\cfra$ and $\cfrb{n}$ are functions in $\Lpm{2}$ for all $\PT{y}\in\mfd$,  $n=1,\ldots,r$, and $j\ge J_0$. Then, the following statements are equivalent.
\begin{enumerate}[{\rm(i)}]
\item The continuous framelet system $\cfrsys(\Psi)$ is a tight frame for $\Lpm{2}$ for all $\ord\ge\ord[0]$, i.e. \eqref{eq:intro.cfr.tight.f} holds for all $\ord\ge \ord[0]$.

\item  For all $f\in\Lpm{2}$, the following identities hold:
\begin{align}
          &   \lim_{j\to\infty}\normau{\int_{\mfd} \InnerL{f,\cfra}\cfra\: \dmf{y}-f}{\Lpm{2}}=0,\label{thmeq:cfr.prj.lim}\\
          &  \int_{\mfd} \InnerL{f,\cfra[j+1,\PT{y}]}\cfra[j+1,\PT{y}]\: \dmf{y}
             = \int_{\mfd} \InnerL{f,\cfra}\cfra\: \dmf{y}
             +\int_{\mfd} \sum_{n=1}^{r}\InnerL{f,\cfrb{n}}\cfrb{n}\: \dmf{y},\;\; j\ge\ord[0].\label{thmeq:cfr.pr.j.j1}
\end{align}

\item For  all $f\in\Lpm{2}$, the following identities hold:
\begin{align}
           & \lim_{j\to\infty}\int_{\mfd} \bigl|\InnerL{f,\cfra}\bigr|^{2}\: \dmf{y}   = \norm{f}{\Lpm{2}}^{2},\label{thmeq:cfr.coe.lim}\\
            & \int_{\mfd} \bigl|\InnerL{f,\cfra[j+1,\PT{y}]}\bigr|^{2}\: \dmf{y}
            = \int_{\mfd} \bigl|\InnerL{f,\cfra}\bigr|^{2}\:\dmf{y} + \int_{\mfd} \sum_{n=1}^{r}\bigl|\InnerL{f,\cfrb{n}}\bigr|^{2}\:\dmf{y},\quad j\ge\ord[0].&\label{thmeq:cfr.coe.j.j1}
\end{align}

\item The generators in $\Psi$ satisfy
\begin{align}
  & \lim_{j\to\infty}\Big|\FT{\scala}\left(\frac{\eigvm}{2^{j}}\right) \Big|= 1,\quad \ell\ge0.\label{thmeq:cfr.scala.lim}\\
            & \left|\FT{\scala}\left(\frac{\eigvm}{2^{j+1}}\right)\right|^{2}
                = \left|\FT{\scala}\left(\frac{\eigvm}{2^{j}}\right)\right|^{2} + \sum_{n=1}^{r}\left|\FT{\scalb^{n}}\left(\frac{\eigvm}{2^{j}}\right)\right|^{2},\quad \ell\ge0,\; j\ge\ord[0].\label{thmeq:cfr.scal.j.j1}
 \end{align}

\item The refinable function $\scala$ satisfies \eqref{thmeq:cfr.scala.lim}  and the filters in the filter bank $\filtbk$ satisfy
\begin{align}
&  \left|\FS{\maska}\left(\frac{\eigvm}{2^{j}}\right)\right|^{2} + \sum_{n=1}^{r} \left|\FS{\maskb[n]}\left(\frac{\eigvm}{2^{j}}\right)\right|^{2} = 1 \quad \forall \ell\in\sigma_\scala^j:=\left\{\ell\in\N_0 \setsep \FT{\scala}\left(\frac{\eigvm}{2^j}\right) \neq 0\right\}\mbox{~and~} \forall j\ge {\ord[0]}+1.
\label{thmeq:cfr.tightness.a.mask.cond}
 \end{align}
\end{enumerate}
\end{theorem}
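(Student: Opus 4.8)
The plan is to diagonalize every operator appearing in (i)--(iii) in the orthonormal eigenbasis $\{\eigfm\}$, turning each statement into an identity between Fourier multipliers, then to read off the pointwise-in-$\ell$ conditions (iv), and finally to pass to the filter bank through \eqref{eq:refinement} to obtain (v). The computational heart is the following. Expanding $\cfra$ from \eqref{eq:intro.cfr} and using $\InnerL{\eigfm,\eigfm[\ell']}=\delta_{\ell,\ell'}$, one checks that the analysis function $\PT{y}\mapsto\InnerL{f,\cfra}$ has $\ell$-th Fourier coefficient $\conj{\FT{\scala}(\eigvm/2^{j})}\,\Fcoem{f}$; Parseval in the $\PT{y}$-variable then gives
\begin{equation*}
\int_{\mfd}\bigl|\InnerL{f,\cfra}\bigr|^{2}\dmf{y}=\sum_{\ell=0}^{\infty}\Bigl|\FT{\scala}\Bigl(\frac{\eigvm}{2^{j}}\Bigr)\Bigr|^{2}\,|\Fcoem{f}|^{2},
\end{equation*}
and integrating the synthesis side shows that $f\mapsto\int_{\mfd}\InnerL{f,\cfra}\cfra\dmf{y}$ equals the self-adjoint Fourier multiplier $f\mapsto\sum_{\ell}|\FT{\scala}(\eigvm/2^{j})|^{2}\Fcoem{f}\eigfm$, whose associated quadratic form is exactly the integral above. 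The same computation with $\scalb^{n}$ in place of $\scala$ handles the high-pass pieces. The assumption $\cfra,\cfrb{n}\in\Lpm{2}$, together with Parseval, is precisely what is needed to interchange $\sum_{\ell}$ with the $\PT{y}$-integration here.

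Granting these formulas, the equivalences become essentially algebraic, and I would organize them as a cycle. For (iii)$\Rightarrow$(iv): testing \eqref{thmeq:cfr.coe.lim} and \eqref{thmeq:cfr.coe.j.j1} against $f=\eigfm$ collapses each $\ell$-sum to its single $\ell$-term and produces exactly \eqref{thmeq:cfr.scala.lim} and \eqref{thmeq:cfr.scal.j.j1}. For (iv)$\Rightarrow$(ii): condition \eqref{thmeq:cfr.scal.j.j1} makes $j\mapsto|\FT{\scala}(\eigvm/2^{j})|^{2}$ nondecreasing, hence by \eqref{thmeq:cfr.scala.lim} bounded above by $1$, so $\bigl\|\int_{\mfd}\InnerL{f,\cfra}\cfra\dmf{y}-f\bigr\|_{\Lpm{2}}^{2}=\sum_{\ell}\bigl(1-|\FT{\scala}(\eigvm/2^{j})|^{2}\bigr)^{2}|\Fcoem{f}|^{2}\to0$ by dominated convergence, giving \eqref{thmeq:cfr.prj.lim}, while multiplying \eqref{thmeq:cfr.scal.j.j1} by $\Fcoem{f}\eigfm$ and summing over $\ell$ gives \eqref{thmeq:cfr.pr.j.j1} as an operator identity. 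For (ii)$\Rightarrow$(iii): pairing \eqref{thmeq:cfr.prj.lim} and \eqref{thmeq:cfr.pr.j.j1} with $f$ and using that the quadratic form of each synthesis multiplier equals the corresponding coefficient integral turns them into \eqref{thmeq:cfr.coe.lim} and \eqref{thmeq:cfr.coe.j.j1}. This closes (ii)$\Leftrightarrow$(iii)$\Leftrightarrow$(iv). Finally (i)$\Leftrightarrow$(iii) is a telescoping: iterating \eqref{thmeq:cfr.coe.j.j1} from level $\ord$ to $M$ and letting $M\to\infty$ with \eqref{thmeq:cfr.coe.lim} yields \eqref{eq:intro.cfr.tight.coeff} at every $\ord\ge\ord[0]$, and conversely subtracting \eqref{eq:intro.cfr.tight.coeff} at consecutive levels recovers \eqref{thmeq:cfr.coe.j.j1} while sending $\ord\to\infty$ on the tail recovers \eqref{thmeq:cfr.coe.lim}.

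The filter bank equivalence (iv)$\Leftrightarrow$(v) is where \eqref{eq:refinement} enters. Putting $\xi=\eigvm/2^{j+1}$ into \eqref{eq:refinement} gives $\FT{\scala}(\eigvm/2^{j})=\FS{\maska}(\eigvm/2^{j+1})\FT{\scala}(\eigvm/2^{j+1})$ and $\FT{\scalb^{n}}(\eigvm/2^{j})=\FS{\maskb[n]}(\eigvm/2^{j+1})\FT{\scala}(\eigvm/2^{j+1})$. Substituting into \eqref{thmeq:cfr.scal.j.j1} factors out $|\FT{\scala}(\eigvm/2^{j+1})|^{2}$ on the right-hand side, so for $\ell\in\sigma_{\scala}^{j+1}$ one may divide to obtain the mask identity \eqref{thmeq:cfr.tightness.a.mask.cond} (at scale $j+1\ge\ord[0]+1$), while for $\ell\notin\sigma_{\scala}^{j+1}$ the same relations force $\FT{\scala}(\eigvm/2^{j})=\FT{\scalb^{n}}(\eigvm/2^{j})=0$, so \eqref{thmeq:cfr.scal.j.j1} holds trivially; the shared condition \eqref{thmeq:cfr.scala.lim} carries over verbatim. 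I expect the only genuine difficulty to lie in the analytic bookkeeping of the diagonalization step and the limits: justifying the $\sum_{\ell}$--integral interchange from the $\Lpm{2}$-hypotheses, and controlling the dominated and monotone convergence in the $\ell$-sums and the $j$-tail. The key enabling observation is that \eqref{thmeq:cfr.scal.j.j1} already forces $0\le|\FT{\scala}(\eigvm/2^{j})|^{2}\le1$, which supplies the summable majorant $|\Fcoem{f}|^{2}$ needed throughout.
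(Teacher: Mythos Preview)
Your proposal is correct and follows essentially the same route as the paper: diagonalize the analysis/synthesis operators as Fourier multipliers $f\mapsto\sum_\ell|\FT{\scala}(\eigvm/2^j)|^2\Fcoem{f}\eigfm$, telescope across levels for (i)$\Leftrightarrow$(ii)/(iii), and factor through \eqref{eq:refinement} for (iv)$\Leftrightarrow$(v). The only cosmetic differences are that the paper handles (ii)$\Leftrightarrow$(iii) by a one-line appeal to polarization (rather than your pairing-with-$f$ for the quadratic form), and the paper justifies dominated convergence in \eqref{thmeq:cfr.scala.lim}$\Rightarrow$\eqref{thmeq:cfr.prj.lim} via boundedness of $\FT{\scala}$ (from $\scala\in L_1(\R)$) rather than via your monotonicity observation $|\FT{\scala}(\eigvm/2^j)|^2\le1$.
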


\begin{proof}
(i)$\Longleftrightarrow$(ii). We define projections  $\cfrpra{j}$ and $\cfrprb{j}$, $n=1,\dots,r$ as
\begin{equation}\label{eqs:proj.cfr}
    \cfrpra{j}(f) := \int_{\mfd} \InnerL{f,\cfra}\cfra\: \dmf{y},\quad
    \cfrprb{j}(f) := \int_{\mfd} \InnerL{f,\cfrb{n}}\cfrb{n}\: \dmf{y},\quad f\in\Lpm{2}.
\end{equation}
Since $\cfrsys(\Psi)$ is a tight frame for $\Lpm{2}$ for all $\ord\ge\ord[0]$,
\begin{equation*}
\begin{aligned}
\label{eq:f.cfrpra.cfrprab.J}
  f = \cfrpra{\ord}(f) + \sum_{j=\ord}^{\infty}\sum_{n=1}^{r} \cfrprb{j}(f)
    = \cfrpra{\ord+1}(f) + \sum_{j=\ord+1}^{\infty}\sum_{n=1}^{r} \cfrprb{j}(f)
\end{aligned}
\end{equation*}
for all $f\in\Lpm{2}$ and for all $J\ge {\ord[0]}$.
Thus, for $\ord\ge\ord[0]$, in $L_2$ sense,
\begin{equation}\label{eq:cfr.pr.J.J1}
 \cfrpra{\ord+1}(f) =  \cfrpra{\ord}(f) + \sum_{n=1}^{r} \cfrprb{\ord}(f),
\end{equation}
which shows \eqref{thmeq:cfr.pr.j.j1}. Then, recursively using \eqref{eq:cfr.pr.J.J1} gives
\begin{equation}\label{eq:cfrpra.m1}
  \cfrpra{m+1}(f) = \cfrpra{\ord}(f) + \sum_{j=\ord}^{m}\sum_{n=1}^{r} \cfrprb{j}(f)
\end{equation}
for all $m\ge J$ and $J\ge {\ord[0]}$. Now forcing $m\to\infty$ gives, in $L_{2}$ sense
\begin{equation*}
  \lim_{m\to\infty}\cfrpra{m+1}(f) = \cfrpra{\ord}(f) + \sum_{j=\ord}^{\infty}\sum_{n=1}^{r} \cfrprb{\ord}(f)=f,
\end{equation*}
which is  \eqref{thmeq:cfr.prj.lim}. Consequently, (i)$\Longrightarrow$(ii).
Conversely, by \eqref{thmeq:cfr.pr.j.j1}, follows \eqref{eq:cfrpra.m1}.
Forcing $m\to\infty$ in \eqref{eq:cfrpra.m1} together with \eqref{thmeq:cfr.prj.lim} gives \eqref{eq:intro.cfr.tight.f}. Thus, (ii)$\Longrightarrow$(i).

(ii)$\Longleftrightarrow$(iii). The equivalence between (ii) and (iii) follows from the polarization identity.

(ii)$\Longleftrightarrow$(iv). By \eqref{eq:intro.cfr} and the orthonormality of $\eigfm$, we obtain
\begin{equation*}\label{eq:cfr.coeff}
    \InnerL{f,\cfra} = \sum_{\ell=0}^{\infty} \overline{\FT{\scala}\left(\frac{\eigvm}{2^{j}}\right)}\Fcoem{f}\:\eigfm(\PT{y}), \quad
    \InnerL{f,\cfrb{n}} = \sum_{\ell=0}^{\infty} \conj{\FT{\scalb^{n}}\left(\frac{\eigvm}{2^{j}}\right)}\Fcoem{f}\:\eigfm(\PT{y}).
\end{equation*}
This together with \eqref{eqs:proj.cfr} and \eqref{eq:intro.cfr} gives, for $j\ge\ord[0]$ and $n=1,\dots,r$, the Fourier coefficients for the projections $\cfrpra{j}(f)$ and $\cfrprb{j}(f)$:
\begin{equation}\label{eq:Fcoe.cfrpr}
    \Fcoem{\left(\cfrpra{j}(f)\right)}
    = \left|\FT{\scala}\left(\frac{\eigvm}{2^{j}}\right)\right|^{2} \Fcoem{f},\quad
    \Fcoem{\left(\cfrprb{j}(f)\right)}
    = \left|\FT{\scalb^{n}}\left(\frac{\eigvm}{2^{j}}\right)\right|^{2} \Fcoem{f},\quad  \ell\in\N_0,
\end{equation}
which implies that \eqref{thmeq:cfr.pr.j.j1} is equivalent to \eqref{thmeq:cfr.scal.j.j1} by the Riesz-Fisher theorem. On the other hand, by \eqref{eq:Fcoe.cfrpr} and Parseval's identity, we obtain
\begin{equation}\label{eq:norm.cfrpra.L2err.Fcoe}
  \normb{\cfrpra{j}(f)-f}{\Lpm{2}}^{2} = \sum_{\ell=0}^{\infty}\left(\left|\FT{\scala}\left(\frac{\eigvm}{2^{j}}\right)\right|^{2} - 1 \right)^{2} |\Fcoem{f}|^{2}.
\end{equation}
When the left-hand side of \eqref{eq:norm.cfrpra.L2err.Fcoe} tends to zero as $j\to\infty$, every term in the sum of the right-hand side in \eqref{eq:norm.cfrpra.L2err.Fcoe} must tend to zero as $j\rightarrow \infty$; i.e.  $\lim_{j\to\infty}\FT{\scala}(2^{-j}\eigvm)= 1$ for each $\ell\ge0$. Thus, \eqref{thmeq:cfr.prj.lim}$\Longrightarrow$\eqref{thmeq:cfr.scala.lim}.
Conversely, by the continuity of $\FT{\scala}$ at zero and  Lebesgue's dominated convergence theorem, we see that
 if  $\lim_{j\rightarrow\infty}\FT{\scala}(2^{-j}\eigvm)= 1$ for each $\ell\ge0$, then $\lim_{j\to\infty}\normb{\cfrpra{j}(f)-f}{\Lpm{2}}^{2}=0$. This shows \eqref{thmeq:cfr.scala.lim}$\Longrightarrow$\eqref{thmeq:cfr.prj.lim}. Thus, (ii)$\Longleftrightarrow$(iv).

(iv)$\Longleftrightarrow$(v).  By the relation in \eqref{eq:refinement}, it can be obtained that for $\ell\ge0$ and $j\ge\ord[0]$,
\begin{align*}
     \left|\FT{\scala}\left(\frac{\eigvm}{2^{j}}\right)\right|^{2} + \sum_{n=1}^{r}\left|\FT{\scalb^{n}}\left(\frac{\eigvm}{2^{j}}\right)\right|^{2}
    = \left(\left|\FT{\maska}\left(\frac{\eigvm}{2^{j+1}}\right)\right|^{2} + \sum_{n=1}^{r}\left|\FT{\maskb[n]}\left(\frac{\eigvm}{2^{j+1}}\right)\right|^{2}\right)\left|\FT{\scala}\left(\frac{\eigvm}{2^{j+1}}\right)\right|^{2}.
\end{align*}
This shows that \eqref{thmeq:cfr.scal.j.j1} is equivalent to \eqref{thmeq:cfr.tightness.a.mask.cond}. Therefore, (iv)$\Longleftrightarrow$(v).
\end{proof}
\begin{remark}{\rm
Tightness of $\cfrsys(\Psi)$ is usually proved for a fixed $J$ under some sufficient conditions that imply but are not equivalent to item (iv) or (v) of Theorem~\ref{thm:cfr.tightness}, see \cite[Theorem 3]{MhPr2004} for the case $r=1$ and $J=0$ with no filter bank associated, and \cite[Theorem 2.1]{Dong2017} for the case of $J=0$ and $r\ge1$ with filter bank associated. The characterization in Theorem~\ref{thm:cfr.tightness} gives a full picture of the relationship among the tightness of a sequence of framelet systems $\cfrsys(\Psi)$, $J\ge J_0$, the framelet  generating set $\Psi$ and  the filter bank $\filtbk$. They are the counterparts of classical tight framelets in $\R^d$, see \cite{Han2010, Han2012}.
}
\end{remark}

\begin{remark}
{\rm The statements (iv) and (v) in Theorem~\ref{thm:cfr.tightness} show that the tightness of continuous framelet system $\cfrsys(\Psi)$ can be reduced to a simple identity in \eqref{thmeq:cfr.scal.j.j1} or \eqref{thmeq:cfr.tightness.a.mask.cond}, where \eqref{thmeq:cfr.scal.j.j1} holds for any classical tight frame generated by $\Psi$ for $\Lpm[\R]{2}$ and \eqref{thmeq:cfr.tightness.a.mask.cond} holds for any filter bank with the perfect reconstruction property. This simplifies the construction of continuous tight frames on the manifold $\mfd$. On the other hand, the condition of \eqref{thmeq:cfr.tightness.a.mask.cond} is weaker than that for $\Lpm[\R]{2}$ as we do not require the downsampling condition for the filter bank, see e.g. \cite{DaHaRoSh2003,RoSh1997}. A direct consequence is that the conditions (iv) and (v) in Theorem~\ref{thm:cfr.tightness} can be easily satisfied by frequency splitting techniques when only generators or filters of band-limited functions are needed, see \cite{HaZhZh2016,HaZh2015} and the remarks following Theorem~\ref{thm:framelet.tightness} in Subsection~\ref{subsec:discreteframelets}.}
\end{remark}


In Theorem~\ref{thm:cfr.tightness}, the condition that $\cfra$ and $\cfrb{n}$ in \eqref{eq:intro.cfr} are functions in $\Lpm{2}$ is automatically satisfied from the band-limited property of $\scala$ and $\scalb^n$, i.e. $\supp\FT\scala$ and $\supp\FT{\scalb^n}$ are finite, when the summation in \eqref{eq:intro.cfr} is taken over finite terms. On the other hand, when $\scala,\scalb^n$ are not band-limited, a mild condition on the decay of $\FT\scala$ guarantees that $\cfra$ and $\cfrb{n}$ in \eqref{eq:intro.cfr} are functions in $\Lpm{2}$, which is a  consequence of
Weyl's asymptotic formula \cite{Chavel1984,Weyl1912} and Grieser's uniform bound of eigenfunctions \cite{Grieser2002} as stated  in the following lemma.

For two real sequences $\{A_{\ell}\}_{\ell=0}^{\infty}$ and $\{B_{\ell}\}_{\ell=0}^{\infty}$, the symbol $A_{\ell}\asymp B_{\ell}$ means that there exist positive constants $c,c^{\prime}$ independent of $\ell$ such that $c^{\prime}B_{\ell} \le A_{\ell}\le c B_{\ell}$ for all $\ell\ge0$.
\begin{lemma}\label{lem:estimate.eigvm.eigfm} Let $d\ge2$ and $\mfd$ be a $d$-dimensional smooth and compact Riemannian manifold with smooth boundary. Let
$\{(\eigfm,\eigvm)\}_{\ell=0}^\infty$ be the orthonormal eigen-pairs of the {Laplace-Beltrami operator} $\LBm$ on $\mfd$, i.e. $\LBm\eigfm = -\eigvm^{2}\: \eigfm,\; \ell\ge0$, with  $\eigfm[0]\equiv1$. Then,
\begin{equation*}
    \eigvm \asymp \ell^{\frac{1}{d}},\quad \norm{\eigfm}{\Lpm{\infty}} \le c_1\:|\eigvm|^{\frac{d-1}{2}},\quad \ell\ge0,
\end{equation*}
where the constant $c_1$ depends only on the dimension $d$.
\end{lemma}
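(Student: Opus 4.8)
The plan is to obtain both estimates by quoting the two classical spectral results named in the statement, so that the proof reduces to bookkeeping rather than new analysis. Set $\mu_\ell:=\eigvm^2$, the eigenvalues of $-\LBm$ listed with multiplicity in nondecreasing order, and introduce the counting function $N(\mu):=\#\{\ell\in\N_0\setsep \mu_\ell\le\mu\}$. Weyl's asymptotic formula \cite{Chavel1984,Weyl1912} for the Laplace--Beltrami operator on a $d$-dimensional compact manifold with smooth (possibly empty) boundary gives $N(\mu)=C\,\mu^{d/2}+o(\mu^{d/2})$ as $\mu\to\infty$, with $C>0$ proportional to the Riemannian volume of $\mfd$; in particular $N(\mu)\asymp \mu^{d/2}$ for large $\mu$.

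First I would invert this relation to get $\eigvm\asymp\ell^{1/d}$. Because the eigenvalues are indexed with multiplicity, the index $\ell$ obeys the two-sided bound $N(\mu_\ell^-)\le \ell+1\le N(\mu_\ell)$, whence $\ell+1\asymp N(\mu_\ell)\asymp\mu_\ell^{d/2}=\eigvm^{d}$ for all sufficiently large $\ell$. Taking $d$-th roots yields $\eigvm\asymp\ell^{1/d}$ for large $\ell$; the finitely many small indices $\ell\ge1$ (for which $\eigvm>0$, since $\mfd$ is connected and hence $\eigvm[0]=0$ is simple) only affect the implied constants, while the case $\ell=0$ is trivial as $\eigvm[0]=0=0^{1/d}$.

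Next I would read off the sup-norm bound from Grieser's uniform estimate \cite{Grieser2002}, which is the boundary analogue of the H\"ormander--Levitan--Sogge $L_\infty$ bound on closed manifolds and asserts the existence of a constant $c_1$ (depending on $\mfd$ and $d$) such that every $L_2$-normalized eigenfunction satisfies $\norm{\eigfm}{\Lpm{\infty}}\le c_1\,\eigvm^{(d-1)/2}$ for $\eigvm\ge1$. Since the $\eigfm$ are orthonormal, this is exactly the claimed bound whenever $\eigvm\ge1$; the finitely many indices with $0<\eigvm<1$ and the constant eigenfunction $\eigfm[0]\equiv1$ (with $\norm{\eigfm[0]}{\Lpm{\infty}}=1$) contribute only a bounded factor, which I would absorb into $c_1$.

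The only genuine subtlety, and the step I would check most carefully, is the passage from Weyl's law to the index asymptotic $\eigvm\asymp\ell^{1/d}$: one must honour the square-root relation $\mu_\ell=\eigvm^2$ between the Laplacian eigenvalues appearing in Weyl's formula and the quantities $\eigvm$ used throughout the paper, and track multiplicities so that the estimate $N(\mu_\ell^-)\le \ell+1\le N(\mu_\ell)$ is applied correctly. A secondary point worth verifying is the normalization of the eigenfunctions: the $\eigfm$ are orthonormal with respect to the probability measure $\memf$ rather than the Riemannian volume, so Grieser's bound must be rescaled by a volume factor, which merely alters the constant $c_1$. Beyond these, everything is a direct appeal to the quoted theorems, and I expect no real analytic obstacle.
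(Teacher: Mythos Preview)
Your proposal is correct and follows exactly the route the paper indicates: the paper does not give a detailed proof but simply records the lemma as ``a consequence of Weyl's asymptotic formula \cite{Chavel1984,Weyl1912} and Grieser's uniform bound of eigenfunctions \cite{Grieser2002}'', and your argument is precisely the standard unpacking of those two citations. The only caveat is the edge case $\ell=0$, where $\eigvm[0]=0$ makes the bound $\norm{\eigfm[0]}{\Lpm{\infty}}\le c_1|\eigvm[0]|^{(d-1)/2}$ vacuously false and cannot be fixed by enlarging $c_1$; this is a harmless imprecision in the statement itself (the lemma is only applied for $\ell\ge1$ in Proposition~\ref{prop:cfr.L2}) rather than a defect in your reasoning.
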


Lemma~\ref{lem:estimate.eigvm.eigfm} implies the following result (see \cite{Dong2017}) which shows that for any $\PT{y}\in\mfd$, the continuous framelets $\cfra$ and $\cfrb{n}$, $n=1,\dots,r$, are in $\Lpm{2}$ under a mild decay assumption on $\FT{\scala}$.
\begin{proposition}
\label{prop:cfr.L2}
Let the conditions of Lemma~{\rm\ref{lem:estimate.eigvm.eigfm}} be satisfied. Let $\Psi:=\{\scala;\scalb^1,\ldots,\scalb^r\}\subset \Lpm[\R]{1}$ with $r\ge1$ be a set of framelet generators associated with a filter bank $\filtbk:=\{\maska; \maskb[1],\ldots, \maskb[r]\}\subset l_1(\Z)$ satisfying \eqref{eq:refinement}. Let $\cfra$ and $\cfrb{n}$ be the continuous framelets given in \eqref{eq:intro.cfr}. Suppose $s>d-1/2$ and
\begin{equation}
\label{eq:decay:phi}
|\FT{\scala}(\xi)|\le c_{0}\: (1+|\xi|)^{-s}\quad  \forall \xi\in\R.
\end{equation}
Then, for any $j\in\Z$,
\[
\sup_{\PT{y}\in\mfd}\|\cfra\|_{\Lpm{2}}<\infty \mbox{~and~}
\sup_{\PT{y}\in\mfd}\|\cfrb{n}\|_{\Lpm{2}}<\infty,\quad n = 1,\ldots,r.
\]
\end{proposition}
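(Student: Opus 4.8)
The plan is to reduce both claims to a single numerical series whose convergence is governed by the interplay between the decay of $\FT{\scala}$ and the growth of the eigenvalues and eigenfunctions. Fix $j\in\Z$ and $\PT{y}\in\mfd$. Since $\{\eigfm\}_{\ell=0}^\infty$ is an orthonormal basis for $\Lpm{2}$, the expansion \eqref{eq:intro.cfr} exhibits the scalars $\FT{\scala}(\eigvm/2^j)\conj{\eigfm(\PT{y})}$ as the candidate Fourier coefficients of $\cfra$. By the Riesz-Fisher theorem, $\cfra\in\Lpm{2}$ if and only if these coefficients are square-summable, in which case
\[
\norm{\cfra}{\Lpm{2}}^2 = \sum_{\ell=0}^\infty \Big|\FT{\scala}\Big(\frac{\eigvm}{2^j}\Big)\Big|^2\,|\eigfm(\PT{y})|^2.
\]
So it suffices to bound the right-hand side by a constant independent of $\PT{y}$.

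Next I would insert the two pointwise estimates. Grieser's uniform bound from Lemma~\ref{lem:estimate.eigvm.eigfm} gives $|\eigfm(\PT{y})|\le\norm{\eigfm}{\Lpm{\infty}}\le c_1|\eigvm|^{(d-1)/2}$ uniformly in $\PT{y}$, and the decay hypothesis \eqref{eq:decay:phi} gives $|\FT{\scala}(\eigvm/2^j)|\le c_0(1+2^{-j}\eigvm)^{-s}$. Combining them yields
\[
\norm{\cfra}{\Lpm{2}}^2 \le c_0^2 c_1^2 \sum_{\ell=0}^\infty (1+2^{-j}\eigvm)^{-2s}\,|\eigvm|^{d-1},
\]
a bound that is manifestly independent of $\PT{y}$, so the desired uniform finiteness follows once the series is shown to converge.

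The convergence is the crux. Using Weyl's asymptotic $\eigvm\asymp\ell^{1/d}$ from Lemma~\ref{lem:estimate.eigvm.eigfm}, the general term of the series behaves, for large $\ell$ and fixed $j$, like $\ell^{-2s/d}\cdot\ell^{(d-1)/d}=\ell^{(d-1-2s)/d}$, the factor $2^{-j}$ being absorbed into the comparison constants. The resulting $p$-series converges precisely when $(d-1-2s)/d<-1$, i.e. when $s>d-\tfrac12$, which is exactly the hypothesis. The $\ell=0$ term is handled separately: there $\eigvm[0]=0$ and $\eigfm[0]\equiv1$, so it contributes the finite quantity $|\FT{\scala}(0)|^2\le c_0^2$. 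I expect this exponent bookkeeping---matching the $L_\infty$ growth of the eigenfunctions against the decay rate so as to land exactly on the threshold $s>d-1/2$---to be the main point requiring care.

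Finally, for the high-pass framelets $\cfrb{n}$ I would exploit the refinement relation \eqref{eq:refinement}. Taking $\xi=\eigvm/2^{j+1}$ gives $\FT{\scalb^n}(\eigvm/2^j)=\FS{\maskb[n]}(\eigvm/2^{j+1})\FT{\scala}(\eigvm/2^{j+1})$, and since $\maskb[n]\in l_1(\Z)$ its Fourier series is uniformly bounded, $|\FS{\maskb[n]}(\cdot)|\le\sum_{k\in\Z}|(\maskb[n])_k|<\infty$. Hence $|\FT{\scalb^n}(\eigvm/2^j)|$ is controlled by a constant multiple of $|\FT{\scala}(\eigvm/2^{j+1})|$, and the $\Lpm{2}$-bound for $\cfrb{n}$ reduces to the identical series with $j$ replaced by $j+1$. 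The same convergence criterion $s>d-1/2$ therefore yields $\sup_{\PT{y}\in\mfd}\norm{\cfrb{n}}{\Lpm{2}}<\infty$ for each $n=1,\dots,r$, completing the argument.
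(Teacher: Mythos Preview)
Your argument is correct and follows essentially the same route as the paper: compute $\norm{\cfra}{\Lpm{2}}^2$ via Parseval, insert the eigenfunction $L_\infty$-bound and the decay of $\FT{\scala}$, reduce via Weyl's law to a $p$-series converging exactly when $s>d-1/2$, and then transfer to $\cfrb{n}$ through the refinement relation together with boundedness of $\FS{\maskb[n]}$. Your version is in fact slightly more explicit about the $\ell=0$ term and the convergence threshold, but there is no substantive difference in method.
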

\begin{proof}
Fix $j\in\Z$. By Parseval's identity and the estimates in Lemma~\ref{lem:estimate.eigvm.eigfm}, the squared $L_{2}$-norm of $\cfra$ is
\begin{align*}
  \normb{\cfra}{\Lpm{2}}^{2}
  = \sum_{\ell=0}^{\infty} \left|\FT{\scala}\left(\frac{\eigvm}{2^{j}}\right)\right|^{2}\left|\conj{\eigfm(\PT{y})}\right|^{2}
  &\le (c_1c_{0})^2\sum_{\ell=0}^{\infty} \left(1+\Bigl|\frac{\eigvm}{2^{j}}\Bigr|\right)^{-2s}\times\eigvm^{{d-1}}\\
  &\le \tilde{c} \sum_{\ell=0,\eigvm\neq0}^{\infty} \eigvm^{-(2s-(d-1))}
  \le \tilde{c}\sum_{\ell=1}^{\infty} \ell^{-\frac{2s-(d-1)}{d}}<\infty,
\end{align*}
where the last inequality follows from the assumption $s>d-1/2$.
Since $\{\maska;\maskb[1],\ldots,\maskb[r]\}\subset l_1(\Z)$, the Fourier series $\FT\maska$, $\FT{\maskb[1]}$, $\ldots$, $\FT{\maskb[r]}$ are all bounded Fourier series. By the  relations in {\eqref{eq:refinement}}, all $\FT{\scalb^n}$  have the same decay property as $\FT\scala$ in \eqref{eq:decay:phi}. The finiteness for the $L_2$-norm of $\cfrb{n}$ then follows from the same argument for $\cfra$ as above.
\end{proof}

\subsection{Semi-discrete framelets}\label{subsec:discreteframelets}

In order to efficiently process a data set on a manifold, one needs the discrete version of the continuous framelets in \eqref{eq:intro.cfr}. A natural way to discretize the continuous framelets on $\mfd$ is to use  quadrature rules (for numerical integration). In this subsection, we show how to use quadrature rules to discretize the continous framelets in \eqref{eq:intro.cfr}.

Let
\[
\QN:=\QN^{(j)}:=\{(\wN,\pN) \in\R\times\mfd \setsep k=0,\dots,N_{j}\}
\]
be a set of pairs at  scale $j$ with $N_j$ weights $\wN\in\R$ and $N_j$ points $\pN\in\mfd$. When $\QN$ is used for numerical integration on $\mfd$, we say $\QN$ a \emph{quadrature rule} on $\mfd$. We use the quadrature rules $\QN$ and $\QN[{N_{j+1}}]$ to discretize the integrals for the continuous framelets $\cfra$ and $\cfrb{n}$ as functions of $\PT{y}$ on $\mfd$ in \eqref{eq:intro.cfr.tight.f}.
The \emph{(semi-discrete) framelets} $\fra(\PT{x})$ and $\frb{n}(\PT{x})$ (with abuse of notation) at scale $j$ are then defined as
\begin{equation}
\label{eq:intro.fra.frb}
\begin{aligned}
    \fra(\PT{x}) &:= \sqrt{\wN}\:\cfra[j,\pN](\PT{x}) = \sqrt{\wN}\sum_{\ell=0}^{\infty} \FT{\scala}\left(\frac{\eigvm}{2^{j}}\right)\conj{\eigfm(\pN)}\eigfm(\PT{x}),\\
    \frb{n}(\PT{x}) &:= \sqrt{\wN[j+1,k]}\:\cfrb[j,{\pN[j+1,k]}]{n}(\PT{x}) = \sqrt{\wN[j+1,k]}\sum_{\ell=0}^{\infty} \FT{\scalb^{n}}\left(\frac{\eigvm}{2^{j}}\right)\conj{\eigfm(\pN[j+1,k])}\eigfm(\PT{x}),\quad n =1,\ldots,r.
\end{aligned}
\end{equation}
Here, the weights $\wN$ in \eqref{eq:intro.fra.frb} need not be non-negative. The square roots of weights are purely needed to satisfy the tightness of the framelets.
The discretization of the integral for $\cfrb{n}$ uses the nodes from $\QN[N_{j+1}]$ as $\cfrb{n}$ is in the scale $j+1$, which can be understood from the point of view of multiresolution analysis. This will be clear later when we discuss the band-limited property of $\scalb^n$.

Let $\QQ:=\{\QN\}_{j\ge J}$. The \emph{(semi-discrete) framelet system} $\frsys(\Psi,\QQ):=\frsys(\Psi,\QQ;\mfd)$ on $\mfd$ (starting at a scale $J\in\Z$) is a (nonhomogeneous) affine system defined to be
\begin{equation}\label{eq:intro.frsys}
    \frsys(\Psi,\QQ) := \frsys(\Psi,\QQ;\mfd) :=\{\fra[\ord,k] \setsep k=0,\dots,N_{\ord}\}\cup \{\frb{} \setsep k=0,\dots,N_{j+1}, \:j\ge \ord\}.
\end{equation}
The framelet system $\frsys(\Psi,\QQ)$ is said to be a \emph{(semi-discrete) tight frame} for $\Lpm{2}$ if $\frsys(\Psi,\QQ)\subset \Lpm{2}$ and if, in $L_2$ sense,
\begin{equation}\label{eq:intro.fr.tight.f}
    f = \sum_{k=0}^{N_J} \InnerL{f,\fra[\ord,k]}\fra[\ord,k]  + \sum_{j=\ord}^{\infty} \sum_{k=0}^{N_{j+1}}\sum_{n=1}^{r}\InnerL{f,\frb{n}}\frb{n} \quad \forall f\in \Lpm{2},
\end{equation}
or equivalently,
\begin{equation*}
\label{thmeq:framelet.tightness}
 \norm{f}{\Lpm{2}}^{2} = \sum_{k=0}^{N_{\ord}} \bigl|\InnerL{f,\fra[\ord,k]}\bigr|^{2} + \sum_{j=\ord}^{\infty} \sum_{k=0}^{N_{j+1}} \sum_{n=1}^{r}\bigl|\InnerL{f,\frb{n}}\bigr|^{2}\quad \forall f\in\Lpm{2}.
\end{equation*}
The elements in $\frsys(\Psi,\QQ)$ are then said to be \emph{(semi-discrete) tight framelets} for $L_2(\mfd)$. We also say $\frsys(\Psi,\QQ)$ (semi-discrete) tight framelets.

The following theorem gives equivalence conditions of a sequence $\{\frsys(\Psi,\QQ)\}_{\ord=\ord[0]}^{\infty}$ of (semi-discrete) framelet systems  in \eqref{eq:intro.fr.tight.f} to be a sequence of  tight frames for $\Lpm{2}$. The equivalence relations lead to framelet transforms on a manifold and a way to  constructing filter banks for a tight framelet system.

\begin{theorem}
\label{thm:framelet.tightness}
Let $J_0\in\Z$ be an integer and $\Psi:=\{\scala;\scalb^1,\ldots,\scalb^r\}\subset \Lpm[\R]{1}$ with $r\ge1$ be a set of  framelet generators associated with a filter bank $\filtbk:=\{\maska; \maskb[1],\ldots, \maskb[r]\}\subset l_1(\Z)$ satisfying \eqref{eq:refinement}. Let $\QQ=\{\QN\}_{j\ge {\ord[0]}}$  be a sequence of quadrature rules  $\QN:=\QN^{(j)}:=\{(\wN,\pN)\in\R\times\mfd \setsep k=0,\dots,N_{j}\}$.
 Define (semi-discrete) framelet system $\frsys(\Psi,\QQ)=\frsys(\Psi,\QQ;\mfd)$, $J\ge J_0$ as in  \eqref{eq:intro.frsys}
with framelets $\fra$ and $\frb{n}$ given by \eqref{eq:intro.fra.frb}. Suppose elements in $\frsys(\Psi,\QQ)$ are all functions in $\Lpm{2}$.  Then, the following statements are equivalent.
 \begin{enumerate}[{\rm(i)}]
 \item   The  framelet system $\frsys(\Psi,\QQ)$ is a tight frame for $\Lpm{2}$ for any $\ord\ge\ord[0]$, i.e. \eqref{eq:intro.fr.tight.f} holds for all $\ord\ge \ord[0]$.

\item For all $f\in\Lpm{2}$, the following identities hold:
\begin{align}
            & \lim_{j\to\infty}\normB{\sum_{k=0}^{N_{j}} \InnerL{f,\fra}\fra-f}{\Lpm{2}}=0,\label{thmeq:fr.prj.lim}\\
            & \sum_{k=0}^{N_{j+1}} \InnerL{f,\fra[j+1,k]}\fra[j+1,k] = \sum_{k=0}^{N_{j}} \InnerL{f,\fra}\fra + \sum_{k=0}^{N_{j+1}} \sum_{n=1}^{r}\InnerL{f,\frb{n}}\frb{n},\quad j\ge\ord[0].\label{thmeq:framelet.pr.j.j1}
 \end{align}

 \item For all $f\in\Lpm{2}$, the following identities hold:
\begin{align}
            & \lim_{j\to\infty} \sum_{k=0}^{N_{j}} \bigl|\InnerL{f,\fra}\bigr|^{2}=\norm{f}{\Lpm{2}}^{2},\label{thmeq:framelet.coe.lim}\\
            & \sum_{k=0}^{N_{j+1}} \bigl|\InnerL{f,\fra[j+1,k]}\bigr|^{2}=\sum_{k=0}^{N_{j}} \bigl|\InnerL{f,\fra}\bigr|^{2} + \sum_{k=0}^{N_{j+1}}\sum_{n=1}^{r}\bigl|\InnerL{f,\frb{n}}\bigr|^{2} ,\quad j\ge\ord[0].\label{thmeq:framelet.coe.j.j1}
\end{align}

\item The generators in $\Psi$ and the sequence of sets $\QN$ satisfy
\begin{align}
  & \lim_{j\to\infty}\overline{\FT{\scala}\left(\frac{\eigvm}{2^{j}}\right)}{\FT{\scala}}\left(\frac{\eigvm[\ell']}{2^{j}}\right)\QU{\ell'}(\QN) = \delta_{\ell,\ell'},\label{thmeq:fr.scala.lim}\\
            &                 \overline{\FT{\scala}\left(\frac{\eigvm}{2^{j}}\right)}{\FT{\scala}}\left(\frac{\eigvm[\ell']}{2^{j}}\right)\QU{\ell'}(\QN)=\left[\overline{\FT{\scala}\left(\frac{\eigvm}{2^{j+1}}\right)}{\FT{\scala}}\left(\frac{\eigvm[\ell']}{2^{j+1}}\right)-\sum_{n=1}^r \overline{\FT{\scalb^n}\left(\frac{\eigvm}{2^{j}}\right)}{\FT{\scalb^n}}\left(\frac{\eigvm[\ell']}{2^{j}}\right)\right]\QU{\ell'}(\QN[N_{j+1}])\label{thmeq:fr.scal.j.j1}
 \end{align}
for all $\ell,\ell'\ge0$ and $j\ge\ord[0]$,
 where
 \begin{equation}\label{eq:sum:wt:u}
 \QU{\ell'}(\QN):=\sum_{k=0}^{N_j}\wN\eigfm(\pN)\overline{\eigfm[\ell'](\pN)}.
\end{equation}

\item The refinable function $\scala$, the filters in the filter bank $\filtbk$, and the sequence of quadrature rules $\QN$ satisfy \eqref{thmeq:fr.scala.lim} and
\begin{align}
&\left[\overline{\FT{\maska}\left(\frac{\eigvm}{2^{j}}\right)}{\FT{\maska}}\left(\frac{\eigvm[\ell']}{2^{j}}\right)\QU{\ell'}(\QN[N_{j-1}])
+\sum_{n=1}^r\overline{\FT{\maskb[n]}\left(\frac{\eigvm}{2^{j}}\right)}{\FT{\maskb[n]}}\left(\frac{\eigvm[\ell']}{2^{j}}\right)\QU{\ell'}(\QN[N_{j}])\right] = \QU{\ell'}(\QN[N_{j}]),  \quad \forall (\ell,\ell')\in \sigma_{\scala,\overline{\scala}}^j, \label{thmeq:fr.tightness.a.mask.cond}
 \end{align}
 for all $j\ge {\ord[0]}+1$, where
\begin{equation}\label{eq:sigma.scala}
 \sigma_{\scala,\overline{\scala}}^j:=\left\{(\ell,\ell')\in\N_0\times \N_0 \setsep \overline{\FT{\scala}\left(\frac{\eigvm}{2^{j}}\right)}{\FT{\scala}}\left(\frac{\eigvm[\ell']}{2^{j}}\right)\neq0\right\}.
\end{equation}
\end{enumerate}
In particular, if for all $j\ge J_0$, the sum $\QU{\ell'}(\QN[N_{j}])$ satisfies
\begin{equation}\label{def:fmtQ}
\QU{\ell'}(\QN[N_{j}]) = \delta_{\ell,\ell'}\quad \forall (\ell,\ell')\in \sigma_{\scala,\overline{\scala}}^j,
\end{equation}
then the above items {\rm(iv)} and {\rm(v)} reduce to the items {\rm(iv)} and {\rm(v)} in Theorem~{\rm\ref{thm:cfr.tightness}} respectively.
\end{theorem}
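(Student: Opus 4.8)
The plan is to mirror the architecture of the proof of Theorem~\ref{thm:cfr.tightness}, replacing each integral $\int_\mfd\cdots\dmf{y}$ by the quadrature sum $\sum_{k=0}^{N_j}$ and replacing the orthonormality relation $\InnerL{\eigfm,\eigfm[\ell']}=\delta_{\ell,\ell'}$ by the weighted sums $\QU{\ell'}(\QN)$ of \eqref{eq:sum:wt:u}. First I would introduce the semi-discrete projections $P_j f:=\sum_{k=0}^{N_j}\InnerL{f,\fra}\fra$ and $Q_j^n f:=\sum_{k=0}^{N_{j+1}}\InnerL{f,\frb{n}}\frb{n}$. The equivalences (i)$\Longleftrightarrow$(ii)$\Longleftrightarrow$(iii) then go through essentially verbatim as in the continuous case: writing the tight-frame identity \eqref{eq:intro.fr.tight.f} at two consecutive starting scales $J$ and $J+1$ and subtracting yields the one-step reconstruction identity \eqref{thmeq:framelet.pr.j.j1}; recursing and then using the limit \eqref{thmeq:fr.prj.lim} recovers \eqref{eq:intro.fr.tight.f}; and (ii)$\Longleftrightarrow$(iii) is the polarization identity. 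None of these steps sees the discretization, so they need no new idea.

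The substantive new computation is (ii)$\Longleftrightarrow$(iv). Expanding the framelets \eqref{eq:intro.fra.frb} in the eigen-basis gives $\InnerL{f,\fra}=\sqrt{\wN}\sum_\ell\conj{\FT{\scala}(\eigvm/2^j)}\,\eigfm(\pN)\,\Fcoem{f}$, and substituting this into $P_j f$ and extracting the $\ell'$-th Fourier coefficient collapses the sum over $k$ into precisely the weighted sum $\QU{\ell'}(\QN)$; thus the $\ell'$-th Fourier coefficient of $P_j f$ equals $\sum_\ell\conj{\FT{\scala}(\eigvm/2^j)}\FT{\scala}(\eigvm[\ell']/2^j)\QU{\ell'}(\QN)\,\Fcoem{f}$, and analogously for $Q_j^n f$ with $\scalb^n$ and $\QN[N_{j+1}]$. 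Matching Fourier coefficients in \eqref{thmeq:framelet.pr.j.j1} for every $f$ (via Riesz--Fisher) forces the coefficient-wise identity \eqref{thmeq:fr.scal.j.j1}, while testing the limit \eqref{thmeq:fr.prj.lim} on the basis functions $f=\eigfm[s]$ and invoking Parseval forces each entry $\conj{\FT{\scala}(\eigvm[s]/2^j)}\FT{\scala}(\eigvm[\ell']/2^j)\QU{\ell'}(\QN)$ to converge to $\delta_{s,\ell'}$, which is exactly \eqref{thmeq:fr.scala.lim}.

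For (iv)$\Longleftrightarrow$(v) I would substitute the refinement relations \eqref{eq:refinement}, evaluated at $\eigvm/2^j$, into \eqref{thmeq:fr.scal.j.j1} taken at scale $j-1$. Every term then carries the common factor $B_{\ell,\ell'}:=\conj{\FT{\scala}(\eigvm/2^j)}\FT{\scala}(\eigvm[\ell']/2^j)$; restricting to the index set $\sigma_{\scala,\overline{\scala}}^j$ of \eqref{eq:sigma.scala} on which $B_{\ell,\ell'}\neq0$ lets me cancel this factor, and rearranging produces exactly the mask identity \eqref{thmeq:fr.tightness.a.mask.cond}, the scale shift $j\mapsto j-1$ accounting for the range $j\ge\ord[0]+1$. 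The restriction to $\sigma_{\scala,\overline{\scala}}^j$ is both essential and natural: off this set both sides vanish identically after the substitution, so no constraint on the masks is imposed there. Finally, the ``In particular'' reduction follows by inserting the hypothesis \eqref{def:fmtQ}, $\QU{\ell'}(\QN)=\delta_{\ell,\ell'}$: the diagonal terms $\ell=\ell'$ turn \eqref{thmeq:fr.scala.lim} into $\lim_{j}|\FT{\scala}(\eigvm/2^j)|=1$ and \eqref{thmeq:fr.scal.j.j1} into the single-index relation \eqref{thmeq:cfr.scal.j.j1}, while all off-diagonal ($\ell\neq\ell'$) terms vanish, recovering items (iv) and (v) of Theorem~\ref{thm:cfr.tightness}.

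The main obstacle I anticipate is the converse half of (ii)$\Longleftrightarrow$(iv): deducing the strong limit \eqref{thmeq:fr.prj.lim} for an arbitrary $f\in\Lpm{2}$ from the entry-wise convergence \eqref{thmeq:fr.scala.lim}. In the continuous case orthonormality diagonalizes $P_j$, so Lebesgue dominated convergence applies term-by-term; here $P_j$ acts through the full, non-diagonal matrix with entries $\conj{\FT{\scala}(\eigvm/2^j)}\FT{\scala}(\eigvm[\ell']/2^j)\QU{\ell'}(\QN)$, and entry-wise convergence to the identity does \emph{not} by itself give operator convergence. I would handle this by first proving the limit on the dense subspace of functions with finite Fourier expansion, where only finitely many entries are involved and the computation is exact, and then extending to all of $\Lpm{2}$ through a uniform-in-$j$ Bessel bound for $\frsys(\Psi,\QQ)$ obtained from the standing $\Lpm{2}$-membership assumption together with the decay of $\FT{\scala}$ (as in Proposition~\ref{prop:cfr.L2}); making this uniform bound explicit is the delicate analytic point of the whole argument.
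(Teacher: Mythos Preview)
Your approach mirrors the paper's almost exactly: the paper too skips (i)$\Longleftrightarrow$(ii)$\Longleftrightarrow$(iii) by reference to Theorem~\ref{thm:cfr.tightness}, expands the framelet coefficients in the eigen-basis to identify the quadrature sums $\QU{\ell'}(\QN)$ (the paper runs the computation through (iii) rather than (ii), a cosmetic difference), and then factors out the common term $\conj{\FT{\scala}(\eigvm/2^{j})}\FT{\scala}(\eigvm[\ell']/2^{j})$ via \eqref{eq:refinement} for (iv)$\Longleftrightarrow$(v), with the reduction under \eqref{def:fmtQ} handled just as you describe.

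Regarding the obstacle you flag in your final paragraph: the paper's own treatment of the converse direction \eqref{thmeq:fr.scala.lim}$\Longrightarrow$\eqref{thmeq:framelet.coe.lim} is the single sentence ``This is true for all $f\in\Lpm{2}$, which gives the equivalence,'' with no further justification. So the analytic point you identify---that entry-wise convergence of the non-diagonal matrix $\conj{\FT{\scala}(\eigvm/2^{j})}\FT{\scala}(\eigvm[\ell']/2^{j})\QU{\ell'}(\QN)$ to $\delta_{\ell,\ell'}$ does not by itself yield convergence of the associated quadratic form on all of $\Lpm{2}$---is passed over in the paper as well. Your density-plus-uniform-Bessel-bound strategy is a reasonable route to close what is in fact a gap shared with the original; just be aware that the standing hypothesis ``elements in $\frsys(\Psi,\QQ)$ are all functions in $\Lpm{2}$'' alone does not obviously supply the uniform-in-$j$ bound you need, so you may have to impose a mild growth condition on the quadrature weights or on $\QU{\ell'}(\QN)$ to make that step rigorous.
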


\begin{proof}
We skip the proofs of equivalence among the statements (i) -- (iii), which are similar to those of Theorem~\ref{thm:cfr.tightness}, and only show the equivalence among the statements (iii) -- (v) as follows.

(iii) $\Longleftrightarrow$ (iv). For $f\in\Lpm{2}$, by the formulas in \eqref{eq:intro.fra.frb} and the orthonormality of $\eigfm$, we obtain
\begin{equation}\label{eq:fr.coeff}
    \InnerL{f,\fra} = \sqrt{\wN}\sum_{\ell=0}^{\infty} \overline{\FT{\scala}\left(\frac{\eigvm}{2^{j}}\right)}\Fcoem{f}\:\eigfm(\pN[j,k]), \quad
    \InnerL{f,\frb{n}} = \sqrt{\wN[j+1,k]}\sum_{\ell=0}^{\infty} \conj{\FT{\scalb^{n}}\left(\frac{\eigvm}{2^{j}}\right)}\Fcoem{f}\:\eigfm(\pN[j+1,k]).
\end{equation}
It then follows
\[
\begin{aligned}
\sum_{k=0}^{N_j}  \bigl|\InnerL{f,\fra[j,k]}\bigr|^{2}
&=\sum_{k=0}^{N_j} \wN\left|\sum_{\ell=0}^{\infty} \overline{\FT{\scala}\left(\frac{\eigvm}{2^{j}}\right)}\Fcoem{f}\:\eigfm(\pN[j,k])\right|^2 \\
&=\sum_{\ell=0}^\infty\sum_{\ell'=0}^\infty
\Fcoem{f}\overline{\Fcoem[\ell']{f}}\;\overline{\FT{\scala}\left(\frac{\eigvm}{2^{j}}\right)}{\FT{\scala}}\left(\frac{\eigvm[\ell']}{2^{j}}\right)\sum_{k=0}^{N_j}\wN\eigfm(\pN)\overline{\eigfm[\ell'](\pN)}\\
&=\sum_{\ell=0}^\infty\sum_{\ell'=0}^\infty\Fcoem{f}\overline{\Fcoem[\ell']{f}}\;
\overline{\FT{\scala}\left(\frac{\eigvm}{2^{j}}\right)}{\FT{\scala}}\left(\frac{\eigvm[\ell']}{2^{j}}\right)\QU{\ell'}(\QN)\\
&=\sum_{\ell=0}^{\infty} |\Fcoem{f}|^2 \left|\FT{\scala}\left(\frac{\eigvm}{2^{j}}\right)\right|^2\QU{\ell}(\QN)
+\sum_{\ell=0}^{\infty}\sum_{\ell'=0,\ell' \neq \ell}^\infty \Fcoem{f}\overline{\Fcoem[\ell']{f}}\:\overline{\FT{\scala}\left(\frac{\eigvm}{2^{j}}\right)}{\FT{\scala}}\left(\frac{\eigvm[\ell']}{2^{j}}\right)\QU{\ell'}(\QN) .
\end{aligned}
\]
This is true for all $f\in\Lpm{2}$, which gives the equivalence between \eqref{thmeq:framelet.coe.lim} and \eqref{thmeq:fr.scala.lim}.
On the other hand, from \eqref{eq:fr.coeff}, we observe that the formula \eqref{thmeq:framelet.coe.j.j1} can be rewritten as
\[
\begin{aligned}
&\sum_{\ell=0}^{\infty}\sum_{\ell'=0}^\infty \Fcoem{f}\overline{\Fcoem[\ell']{f}}\:\overline{\FT{\scala}\left(\frac{\eigvm}{2^{j+1}}\right)}{\FT{\scala}}\left(\frac{\eigvm[\ell']}{2^{j+1}}\right)\QU{\ell'}(\QN[N_{j+1}])\\
=&\sum_{\ell=0}^{\infty}\sum_{\ell'=0}^\infty \Fcoem{f}\overline{\Fcoem[\ell']{f}}
\left[\overline{\FT{\scala}\left(\frac{\eigvm}{2^{j}}\right)}{\FT{\scala}}\left(\frac{\eigvm[\ell']}{2^{j}}\right)
\QU{\ell'}(\QN)+\sum_{n=1}^r\overline{\FT{\scalb^n}\left(\frac{\eigvm}{2^{j}}\right)}{\FT{\scalb^n}}\left(\frac{\eigvm[\ell']}{2^{j}}\right)\QU{\ell'}(\QN[N_{j+1}])
\right]
  \quad \forall f\in\Lpm{2},
\end{aligned}
\]
which is equivalent to  \eqref{thmeq:fr.scal.j.j1}.

(iv) $\Longleftrightarrow$ (v). By  \eqref{eq:refinement}, we have
\[
\begin{aligned}
&\overline{\FT{\scala}\left(\frac{\eigvm}{2^{j-1}}\right)}{\FT{\scala}}\left(\frac{\eigvm[\ell']}{2^{j-1}}\right)\QU{\ell'}(\QN[N_{j-1}])
+\sum_{n=1}^r\overline{\FT{\scalb^n}\left(\frac{\eigvm}{2^{j-1}}\right)}{\FT{\scalb^n}}\left(\frac{\eigvm[\ell']}{2^{j-1}}\right)\QU{\ell'}(\QN[N_{j}])
\\=&
\left[\overline{\FT{\maska}\left(\frac{\eigvm}{2^{j}}\right)}{\FT{\maska}}\left(\frac{\eigvm[\ell']}{2^{j}}\right)\QU{\ell'}(\QN[N_{j-1}])
+\sum_{n=1}^r\overline{\FT{\maskb[n]}\left(\frac{\eigvm}{2^{j}}\right)}{\FT{\maskb[n]}}\left(\frac{\eigvm[\ell']}{2^{j}}\right)\QU{\ell'}(\QN[N_{j}])\right]
\overline{\FT{\scala}\left(\frac{\eigvm}{2^{j}}\right)}{\FT{\scala}}\left(\frac{\eigvm[\ell']}{2^{j}}\right),
\end{aligned}
\]
which implies \eqref{thmeq:fr.scal.j.j1}$\Longleftrightarrow$\eqref{thmeq:fr.tightness.a.mask.cond} and thus proves the equivalence  between (iv) and (v).

In particular, if \eqref{def:fmtQ} is satisfied, then in view of $\sigma^{j}_{\scala,\conj{\scala}}\subset\sigma^{j+1}_{\scala,\conj{\scala}}$ which is due to  the refinement relation in \eqref{eq:refinement}, we see that  \eqref{thmeq:fr.scala.lim}, \eqref{thmeq:fr.scal.j.j1} and \eqref{thmeq:fr.tightness.a.mask.cond} reduce to \eqref{thmeq:cfr.scala.lim}, \eqref{thmeq:cfr.scal.j.j1} and  \eqref{thmeq:cfr.tightness.a.mask.cond} in Theorem~\ref{thm:cfr.tightness} respectively.  We are done.
\end{proof}

\begin{remark}[Unitary Extension Principle]
{\rm
The items (iv) and (v) in Theorem~\ref{thm:framelet.tightness} can be regarded as the \emph{unitary extension principle (UEP)} for  $\Lpm{2}$. In $\Lpm[\R]{2}$, the filter bank $\filtbk=\{\maska;\maskb[1],\ldots,\maskb[r]\}$ associated with $\Psi=\{\scala;\scalb^1,\ldots,\scalb^r\}\subset \Lpm[\R]{2}$ by \eqref{eq:refinement} is said to satisfy the \emph{UEP} (see \cite{DaHaRoSh2003,RoSh1997}) for $\Lpm[\R]{2}$ if for $a.e.\; \xi\in\R$,
\begin{subequations}
\label{eq:UEP}
\begin{align}
&|\FS\maska(\xi)|^2+\sum_{n=1}^r|\FS\maskb(\xi)|^2 = 1,
\label{eq:UEP1}\\
&\overline{\FS\maska(\xi)}\FS\maska\left(\xi+\frac{1}{2}\right) +\sum_{n=1}^r\overline{\FS\maskb(\xi)}\FS\maskb\left(\xi+\frac{1}{2}\right) =0. \label{eq:UEP2}
\end{align}
\end{subequations}
The UEP conditions in \eqref{eq:UEP} together with a decay condition on $\FT{\scala}$ imply the tightness of a framelet system generated from $\Psi$ through dilation and translation in $\Lpm[\R]{2}$, see e.g. \cite{Daubechies1992}. By Theorem~\ref{thm:cfr.tightness}, only the condition \eqref{eq:UEP1} is needed to construct continuous tight frame $\cfrsys(\Psi;\mfd)$ for $\Lpm{2}$. To ensure the tightness of the semi-discrete tight framelet system $\frsys(\Psi,\QQ)$ in $\Lpm{2}$, the condition \eqref{thmeq:fr.tightness.a.mask.cond} is needed. This can be viewed as a generalization of UEP on the manifold $\mfd$. The condition \eqref{thmeq:fr.tightness.a.mask.cond} seems more complicated than those in \eqref{eq:UEP}. However, \eqref{thmeq:fr.tightness.a.mask.cond} brings more flexibility in practice for the construction of semi-discrete tight frames for $\Lpm{2}$ as will be discussed below.}
\end{remark}

\begin{remark}[Quadrature Rules]
{\rm  Theorem~\ref{thm:framelet.tightness} provides a natural connection to the design of polynomial-exact quadrature rules on $\mfd$. It
shows that using suitable quadrature rules on $\mfd$ is critical to the tightness and the multiresolution structure for framelets $\frsys(\Psi,\QQ)$.
The sum $\QU{\ell'}(\QN)$  in \eqref{eq:sum:wt:u}
 is a discrete version of the integral of the product of $\eigfm$ and $\eigfm[\ell']$ by the quadrature rule $\QN$.
Suppose the refinable function is normalized so that $\FT{\scala}(0)=1$. Then, by the orthonormality of the eigenfunctions $\eigfm$,  the formula \eqref{thmeq:fr.scala.lim} is saying that the error of the numerical integration approximated by the framelet quadrature rule $\QN$ converges to zero as $j\to\infty$, that is,
\[
\lim_{j\rightarrow\infty}\QU{\ell'}(\QN)=\lim_{j\rightarrow\infty} \sum_{k=0}^{N_j}\eigfm(\pN)\overline{\eigfm[\ell'](\pN)} = \int_\mfd \eigfm(\PT{x})\overline{\eigfm[\ell'](\PT{x})}d\mu(\PT{x})= \InnerL{\eigfm,\eigfm[\ell']} = \delta_{\ell,\ell'}.
\]
This is satisfied  by  most of the quadrature rules, for example, QMC designs on the sphere \cite{BrDiSaSlWaWo2014}, lattice rules and low-discrepancy points on the unit cube \cite{DiKuSl2013}.
}
\end{remark}

\begin{remark}{\rm
For simplicity, one may consider using the same quadrature rule for all scales in practice \cite{Dong2017}, i.e. $\QN \equiv \QN[N]$ for all $j$, the equations \eqref{thmeq:fr.scal.j.j1} and \eqref{thmeq:fr.tightness.a.mask.cond} are simplified without the term  $\QU{\ell'}$. This, however, leads to that the data complexity (or the redundancy rate) increases exponentially in the level of decomposition and is thus not desirable.
}
\end{remark}

We next discuss how to achieve condition in \eqref{def:fmtQ}. For $n\in\No$, the space $\polyspm:=\spann\{\eigfm, \overline{\eigfm} \setsep \eigvm\le n\}$ is said to be the \emph{(orthogonal diffusion) polynomial space} of degree $n$ on $\mfd$ and an element of $\polyspm$ is said to be a polynomial of degree $n$.
In Lemma~\ref{lem:estimate.eigvm.eigfm}, Corollary~\ref{cor:framelet.tightness2}, and Theorem~\ref{thm:dec:rec} below, we assume that the product of two polynomials is still a polynomial, that is, there exists a (minimal) integer $c\ge 2$ such that
\begin{equation}
\label{eq:prodAssump}
q_1q_2\in\Pi_{c\cdot n}\quad \forall q_1,q_2\in\Pi_n.
\end{equation}
This assumption holds true for a general compact Riemannian manifold when the orthonormal eigen-pair $\{(\eigfm,\eigvm)\}_{\ell=0}^\infty$ is of a certain operator, such as the Laplace-Beltrami operator, see e.g. \cite[Theorem~A.1]{FiMh2011}. When $\mfd$ is the unit sphere $\sph{d}$ or the  torus $\torus{d}$ for $d\ge1$, the assumption of \eqref{eq:prodAssump} holds with $c=2$ for the orthonormal eigen-pair of the Laplace-Beltrami operator.

For $n\ge0$, a quadrature rule $\QN[N,n]:=\QN[N]:=\{(\wH[k],\pH{k})\}_{k=0}^{N}$ on $\mfd$ is said to be a \emph{polynomial-exact quadrature rule} of degree $n$ if
\begin{equation}
\label{eq:QN}
    \int_{\mfd}q(\PT{x})\dmf{x} =  \sum_{k=0}^{N} \wH[k]\: q(\pH{k})		\quad \forall q\in\polyspm[n].
\end{equation}
Here, we use $\QN[N,n]$ to emphasize the degree $n$ of the exactness of $\QN[N]$.
Since $\Pi_n$ has the finite dimension, the quadrature rules $\QH[N,n]$ can be computed and pre-designed. For example, \cite{MhNaWa2001,SlWo2004} give the polynomial-exact quadrature rules on the two-dimensional sphere $\sph{2}$. For polynomial-exact quadrature rules on other manifolds, refer to e.g. \cite{Br_etal2014,HaSa2004,KoRe2015}.

The following lemma shows that if the generators of $\Psi$ are band-limited functions, the condition \eqref{def:fmtQ} can be easily satisfied.
\begin{lemma}
\label{lem:UQ}
Suppose \eqref{eq:prodAssump} holds. Let $\scala\in\Lpm[\R]{1}$ be a band-limited function such that $\supp\FT{\scala}\subseteq[0,1/c]$ with $c\ge2$ the integer in \eqref{eq:prodAssump}. Let $j\in\Z$ and $\QN=\{(\wN,\pN)\in\R\times\mfd \setsep k=0,\dots,N_{j}\}$ be a polynomial-exact quadrature rule  of degree $2^{j}$. Then $\QN$ satisfies \eqref{def:fmtQ}.
\end{lemma}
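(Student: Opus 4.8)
The plan is to reduce \eqref{def:fmtQ} to a pure degree-counting argument: for every admissible pair $(\ell,\ell')$ I would show that the integrand $\eigfm\,\overline{\eigfm[\ell']}$ is a polynomial of degree at most $2^{j}$, so that the degree-$2^{j}$ polynomial-exact rule $\QN$ integrates it \emph{exactly} against $\memf$; the orthonormality of $\{\eigfm\}$ then turns the exact integral into the Kronecker delta, which is precisely the desired identity $\QU{\ell'}(\QN)=\delta_{\ell,\ell'}$.

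First I would extract the degree bounds from the band-limiting hypothesis. If $(\ell,\ell')\in\sigma_{\scala,\overline{\scala}}^j$, then by the definition \eqref{eq:sigma.scala} both $\FT{\scala}(\eigvm/2^{j})\neq0$ and $\FT{\scala}(\eigvm[\ell']/2^{j})\neq0$. Since $\supp\FT{\scala}\subseteq[0,1/c]$ and the eigenvalues are nonnegative, this forces $0\le\eigvm/2^{j}\le 1/c$ and $0\le\eigvm[\ell']/2^{j}\le 1/c$, i.e. $\eigvm\le 2^{j}/c$ and $\eigvm[\ell']\le 2^{j}/c$. Hence, directly from the definition of the polynomial space, $\eigfm\in\polyspm[2^{j}/c]$ and $\overline{\eigfm[\ell']}\in\polyspm[2^{j}/c]$, the latter because $\polyspm[n]$ is spanned by the eigenfunctions together with their conjugates $\overline{\eigfm}$ with $\eigvm\le n$.

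Next I would invoke the product assumption \eqref{eq:prodAssump} with $n=2^{j}/c$: since both factors lie in $\polyspm[2^{j}/c]$, their product satisfies
\[
\eigfm\,\overline{\eigfm[\ell']}\in\polyspm[c\cdot(2^{j}/c)]=\polyspm[2^{j}].
\]
Applying the polynomial-exactness \eqref{eq:QN} of $\QN$ (of degree $2^{j}$) to the polynomial $q=\eigfm\,\overline{\eigfm[\ell']}$ then yields
\[
\QU{\ell'}(\QN)=\sum_{k=0}^{N_j}\wN\,\eigfm(\pN)\overline{\eigfm[\ell'](\pN)}=\int_{\mfd}\eigfm(\PT{x})\overline{\eigfm[\ell'](\PT{x})}\dmf{x}=\InnerL{\eigfm,\eigfm[\ell']}=\delta_{\ell,\ell'},
\]
where the penultimate equality is the definition of $\QU{\ell'}$ in \eqref{eq:sum:wt:u} and the last is the assumed orthonormality of the eigen-pair. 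Since this holds for every $(\ell,\ell')\in\sigma_{\scala,\overline{\scala}}^j$, condition \eqref{def:fmtQ} follows.

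I do not expect a serious obstacle; the content of the lemma is entirely in the degree bookkeeping. The one point requiring care is the sharp calibration in the displayed inclusion: the factor $c$ from \eqref{eq:prodAssump} must multiply a degree budget of exactly $2^{j}/c$, so that the product lands precisely in $\polyspm[2^{j}]$ rather than in some larger space on which $\QN$ need not be exact. This is exactly why the hypothesis demands the tight support $\supp\FT{\scala}\subseteq[0,1/c]$ (and why $c$ is taken as the minimal integer in \eqref{eq:prodAssump}); a looser band such as $[0,1/2]$ would generally push the product beyond the exactness degree and break the argument.
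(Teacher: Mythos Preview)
Your proposal is correct and follows essentially the same route as the paper's proof: bound $\eigvm,\eigvm[\ell']$ by $2^{j}/c$ from the support condition on $\FT{\scala}$, invoke \eqref{eq:prodAssump} to place $\eigfm\overline{\eigfm[\ell']}$ in $\polyspm[2^{j}]$, and then apply the polynomial-exactness of $\QN$ together with the orthonormality of $\{\eigfm\}$. Your closing remark on why the support must be calibrated to $[0,1/c]$ is a helpful clarification not spelled out in the paper.
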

\begin{proof}
Since $\supp\FT{\scala}\subseteq[0,1/c]$, we obtain by \eqref{eq:sigma.scala}
\[
\sigma_{\scala,\overline{\scala}}^{j}\subseteq \{(\ell,\ell')\setsep (\eigvm,\eigvm[\ell'])\in[0,2^{j}/c)\times[0,2^{j}/c).
\]
This together with the assumption in \eqref{eq:prodAssump} gives $\eigfm\overline{\eigfm[\ell']}\in\Pi_{2^j}$ for $(\ell,\ell')\in \sigma_{\scala,\overline{\scala}}^{j}$. Consequently,
by the orthonormality of $\{\eigfm\}_{\ell=0}^\infty$ and that $\QN$ is a quadrature rule of degree $2^j$, for all $(\ell,\ell')\in \sigma_{\scala,\overline{\scala}}^{j}$, we obtain
\[
 \QU{\ell'}(\QN)=\sum_{k=0}^{N_j}\wN\:\eigfm(\pN)\overline{\eigfm[\ell'](\pN)}=\int_\mfd \eigfm(\PT{x})\overline{\eigfm[\ell'](\PT{x})} d\memf(\PT{x}) = \delta_{\ell,\ell'},
\]
which is \eqref{def:fmtQ}.
\end{proof}

The following corollary, which is an immediate consequence of Theorem~\ref{thm:framelet.tightness} and Lemma~\ref{lem:UQ},  shows that the tightness of a sequence of semi-discrete framelet systems $\frsys(\Psi,\QQ)$, $\ord\ge\ord[0]$ is equivalent to that of the corresponding sequence of continuous framelet systems $\cfrsys(\Psi)$, $\ord\ge\ord[0]$ if the quadrature rule $\QN$, $j\ge\ord[0]$ for $\frsys(\Psi,\QQ)$ is exact for polynomials of degree $2^{j}$.
\begin{corollary}\label{cor:framelet.tightness2}Let $J_0\in\Z$ be an integer and $\Psi:=\{\scala;\scalb^1,\ldots,\scalb^r\}\subset\Lpm[\R]{1}$ with $r\ge1$ a set of band-limited functions associated with a filter bank $\filtbk:=\{\maska; \maskb[1],\ldots, \maskb[r]\}\subset l_1(\Z)$ satisfying \eqref{eq:refinement}. Suppose that \eqref{eq:prodAssump} holds,   $\supp\FT{\scala}\subseteq[0,1/c]$ with $c\ge2$ the integer in \eqref{eq:prodAssump},  and $\QN$ is a (polynomial-exact) quadrature rule of degree $2^j$. Let $\QQ:=\{\QN\}_{j\ge {\ord[0]}}$  and define continuous framelet system $\cfrsys(\Psi), J\ge J_0$ as in  \eqref{eq:wav.sys} and
  semi-discrete framelet system $\frsys(\Psi,\QQ)$, $J\ge J_0$ as in  \eqref{eq:intro.frsys}.
Then, the framelet system $\frsys(\Psi,\QQ)$ is a tight frame for $\Lpm{2}$ for all $\ord\ge \ord[0]$ if and only if the   framelet system $\cfrsys(\Psi)$ is a tight frame for $\Lpm{2}$ for all $\ord\ge \ord[0]$.
 \end{corollary}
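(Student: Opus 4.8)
The plan is to derive the corollary purely as a chaining of Lemma~\ref{lem:UQ} with the ``in particular'' clause of Theorem~\ref{thm:framelet.tightness}, with no new analytic input required. First I would check that the hypotheses of Lemma~\ref{lem:UQ} are met at \emph{every} scale $j\ge J_0$: by assumption \eqref{eq:prodAssump} holds, $\scala$ is band-limited with $\supp\FT{\scala}\subseteq[0,1/c]$, and each $\QN$ in $\QQ=\{\QN\}_{j\ge J_0}$ is polynomial-exact of degree $2^{j}$. Applying Lemma~\ref{lem:UQ} at each such $j$ then yields that $\QN$ satisfies \eqref{def:fmtQ}; that is, $\QU{\ell'}(\QN)=\delta_{\ell,\ell'}$ for all $(\ell,\ell')\in\sigma_{\scala,\overline{\scala}}^{j}$, and this holds simultaneously for all $j\ge J_0$.

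With \eqref{def:fmtQ} in force for every $j\ge J_0$, the ``in particular'' statement of Theorem~\ref{thm:framelet.tightness} applies directly. It guarantees that the characterizing conditions (iv) and (v) of Theorem~\ref{thm:framelet.tightness} collapse onto conditions (iv) and (v) of Theorem~\ref{thm:cfr.tightness}: the weight sums $\QU{\ell'}(\cdot)$ appearing in \eqref{thmeq:fr.scala.lim}, \eqref{thmeq:fr.scal.j.j1} and \eqref{thmeq:fr.tightness.a.mask.cond} all reduce to Kronecker deltas on the active index sets, the off-diagonal ($\ell\neq\ell'$) contributions drop out, and the surviving diagonal relations are precisely \eqref{thmeq:cfr.scala.lim}, \eqref{thmeq:cfr.scal.j.j1} and \eqref{thmeq:cfr.tightness.a.mask.cond}. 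I would not re-derive this reduction, since the nesting $\sigma_{\scala,\overline{\scala}}^{j}\subseteq\sigma_{\scala,\overline{\scala}}^{j+1}$ (coming from the refinement relation \eqref{eq:refinement}) that makes the single hypothesis \eqref{def:fmtQ} suffice for the neighbouring levels in \eqref{thmeq:fr.tightness.a.mask.cond} is already handled inside the proof of Theorem~\ref{thm:framelet.tightness}.

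The final step is to close the loop of equivalences. By Theorem~\ref{thm:framelet.tightness}, tightness of $\frsys(\Psi,\QQ)$ for all $J\ge J_0$ (item (i)) is equivalent to its conditions (iv)/(v); by the previous paragraph these are identical to conditions (iv)/(v) of Theorem~\ref{thm:cfr.tightness}; and by Theorem~\ref{thm:cfr.tightness} the latter are in turn equivalent to tightness of $\cfrsys(\Psi)$ for all $J\ge J_0$ (its item (i)). Stringing these three equivalences together delivers the asserted ``if and only if.'' There is essentially no hard step here; the corollary is bookkeeping on top of results already established, and the only point demanding attention is confirming that Lemma~\ref{lem:UQ} may legitimately be invoked at each scale $j\ge J_0$ with exactness degree matched to $2^{j}$, which is exactly what guarantees \eqref{def:fmtQ} holds uniformly and thus triggers the reduction.
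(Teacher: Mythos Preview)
Your proposal is correct and matches the paper's approach exactly: the paper states the corollary as ``an immediate consequence of Theorem~\ref{thm:framelet.tightness} and Lemma~\ref{lem:UQ}'' without further elaboration, and your write-up simply fleshes out that chaining of equivalences.
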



\section{Fast framelet filter bank transforms on $\mfd$}\label{sec:fast.algo}

 By \eqref{eq:intro.fra.frb} and \eqref{eq:intro.cfr}, the framelet $\fra$ in a framelet system $\frsys(\Psi;\QQ)$ can be written as a constant multiple of the kernel in \eqref{kernel:decay}: $\fra=\sqrt{\wN}K_{\FT{\scala},2^j}(\cdot,\pN)$. The $\fra$ is thus well-localized, concentrated at $\pN$ when $j$ is sufficiently large (see  Figure~\ref{fig:fr.S2}). As the convolution of a function $g$ in $\Lpm[\R]{1}$ with the delta function $\boldsymbol{\delta}$ which recovers $g$, the inner product $\InnerL{f,\fra[\ord,k]}$ of the framelet coefficient approximates the function value $f(\pN[\ord,k])$ as level $\ord$ is sufficiently high. In practice, we can thus regard the function values $f(\pN[\ord,k])$, $k = 0,\ldots,N_{\ord}$ on the manifold as the values of the framelet coefficients $\InnerL{f,\fra[\ord,k]}$, $k = 0,\ldots, N_{\ord}$ at scale $\ord$.

In this section, we discuss the multi-level framelet filter bank transforms
associated with a sequence of tight frames $\frsys(\Psi,\QQ)$ for $\Lpm{2}$. The transforms include the \emph{decomposition} and the \emph{reconstruction}: the decomposition of $\fracoev=(\fracoev[j,k])_{k=0}^{N_j} = (\InnerL{f,\fra[j,k]})_{k=0}^{N_j}$ into a coarse scale \emph{approximation coefficient sequence} $\fracoev[j-1]=(\InnerL{f,\fra[j-1,k]})_{k=0}^{N_{j-1}}$ and into the coarse scale \emph{detail coefficient sequences} $\frbcoev[j-1]{n}= (\frbcoev[j-1,k]{n} )_{k=0}^{N_j} = (\InnerL{f,\frb[j-1,k]{n}})_{k=0}^{N_j}$, $n=1,\ldots,r$, and  the reconstruction of  $\fracoev$, an inverse process, from the coarse scale approximations and details to fine scales.
We show that the decomposition and reconstruction algorithms for the framelet filter bank transforms can be implemented based on discrete Fourier transforms on $\mfd$.
Using fast discrete Fourier transforms (FFTs) on $\mfd$, we are able to develop fast algorithmic realizations for the multi-level framelet filter bank transforms  ({\fmt} algorithms).

\subsection{Multi-level framelet filter bank transforms}
\label{sec:decomp.reconstr}

The {\fmt} algorithms use convolution, downsampling and upsampling for data sequences on $\mfd$, as we introduce now.

Let $\{\QN\}_{j={\ord[0]}}^{\infty}$ be a sequence of quadrature rules on $\mfd$ with $\QN=\{(\wN,\pN) \in\R\times\mfd \setsep k=0,\dots,N_{j}\}$ a polynomial-exact quadrature rule of degree $2^j$, i.e. \eqref{eq:QN} holds with $\QN[N]$ replaced by $\QN$.
For an integer $N\in\N_0$, we denote by $l(N)$ the set of sequences supported on $[0,N]$. Let $\Lambda_j:=\dim \Pi_{2^j/c} = \#\{\ell\in\N_0: \eigvm\le 2^{j}/c\}$ with $c\ge2$ the minimal integer in \eqref{eq:prodAssump}. The following transforms (operators or operations) between sequences in $l(\Lambda_j)$ and sequences in $l(N_j)$ play an important role in describing and implementing the {\fmt} algorithms.

For $j\in\N_0$, the \emph{discrete Fourier transform} $\fft[j]: l(\Lambda_j)\rightarrow l(N_{j})$ for a sequence $\mathsf{\tilde{c}}=(\mathsf{\tilde c}_\ell)_{\ell=0}^{\Lambda_j}\in l(\Lambda_j)$ is defined as
\begin{equation}\label{def:fft}
(\fft[j] \mathsf{\tilde c})_k :=   \sum_{\ell=0}^{\Lambda_j}\mathsf{\tilde c}_\ell\: \sqrt{\wN[j,k]}\:\eigfm(\pN[j,k]),\quad k = 0,\ldots, N_{j}
\end{equation}
The sequence $\fft[j]\mathsf{\tilde c}$ is said to be a \emph{$(\Lambda_j,N_{j})$-sequence} and $\mathsf{\tilde c}$ is said to be the \emph{discrete Fourier coefficient sequence} of $\fft[j]\mathsf{\tilde c}$. Let $l(\Lambda_j,N_{j})$ the set of all $(\Lambda_j,N_{j})$-sequences.
The \emph{adjoint discrete Fourier transform} $\fft[j]^*: l(N_{j})\rightarrow l(\Lambda_j)$ for a sequence $\fracoev[]=(\fracoev[k])_{k=0}^{N_j}\in l(N_j)$ is defined by
\begin{equation}\label{def:adjfft}
(\fft[j]^*\fracoev[])_\ell :=    \sum_{k=0}^{N_{j}}  \fracoev[k]\: \sqrt{\wN[j,k]}\:\conj{\eigfm(\pN[j,k])}, \quad \ell=0,\dots,\Lambda_j .
\end{equation}
%
Since $\QN$ is a polynomial-exact quadrature rule of degree $2^j$, for every $(\Lambda_j,N_j)$-sequence $\fracoev[]$, there is a \emph{unique} sequence $\mathsf{\tilde c}\in l(\Lambda_j)$ such that $\fft \mathsf{\tilde c} = \fracoev[]$.
Hence, the notation $\dfcav[]:=\mathsf{\tilde c}=\fft^* \fracoev[]$ for the discrete Fourier coefficient sequence of a $(\Lambda_j,N_j)$-sequence $\fracoev[]$ is well-defined.

Let $\mask\in l_1(\Z)$ be a mask (filter).
The \emph{discrete convolution}  $\fracoev[] \dconv \mask$ of a sequence $\fracoev[]\in l(\Lambda_j,N_j)$  with a mask $\mask$ is a sequence in  $l(\Lambda_j,N_j)$ defined as
\begin{equation}\label{eq:dis.conv}
    (\fracoev[] \dconv \mask)_k := \sum_{\ell=0}^{\Lambda_j} \dfcav[\ell]\: {\FS{\mask}}\left(\frac{\eigvm}{2^{j}}\right)\sqrt{\wN}\:\eigfm(\pN),\quad k=0,\dots,N_{j}.
\end{equation}
As $(\widehat{\fracoev[] \dconv \mask})_\ell = \dfcav[\ell]\:{\FS{\mask}}\left(\frac{\eigvm}{2^{j}}\right)$ for $\ell\in\Lambda_j$, we have $\widehat{\fracoev[] \dconv \mask}\in l(\Lambda_j)$ and the definition \eqref{eq:dis.conv} is equivalent to  $\fracoev[] \dconv \mask  = \fft(\widehat{\fracoev[] \dconv \mask} )$.

The \emph{downsampling operator} $\downsmp:l(\Lambda_j,N_j)\rightarrow l(N_{j-1})$ for a  $(\Lambda_j,N_j)$-sequence $\fracoev[]$ is
\begin{equation}\label{eq:downsampling}
    (\fracoev[]\downsmp)_k := \sum_{\ell=0}^{\Lambda_{j}} \dfcav[\ell] \:\sqrt{\wN[j-1,k]}\:\eigfm(\pN[j-1,k]),\quad k=0,\dots,N_{j-1}.
\end{equation}
The \emph{upsampling operator} $\:\upsmp: l(\Lambda_{j-1},N_{j-1})\rightarrow l(\Lambda_j,N_j)$ for a $(\Lambda_{j-1},N_{j-1})$-sequence $\fracoev[]$ is
\begin{equation}\label{eq:upsampling}
(\fracoev[]\upsmp)_k :=
\sum_{\ell=0}^{\Lambda_{j-1}}\dfcav[\ell] \sqrt{\wN}\:\eigfm(\pN),\quad k = 0,\ldots, N_{j}.
\end{equation}

For a mask $\mask$, let $\mask^\star$ be the mask satisfying $\FT{\mask^\star}(\xi) = \overline{\FT\mask(\xi)}$, $\xi\in\R$.
The following theorem shows the framelet decomposition and reconstruction using the above convolution, downsampling and upsampling, under the condition that  $\QN$ is a polynomial-exact quadrature rule of degree $2^j$.
\begin{theorem}
\label{thm:dec:rec}
Let $J_0\in\Z$ be an integer and $\Psi:=\{\scala;\scalb^1,\ldots,\scalb^r\}\subset \Lpm[\R]{1}$ with $r\ge1$ a set of framelet generators associated with a filter bank $\filtbk:=\{\maska; \maskb[1],\ldots, \maskb[r]\}\subset l_1(\Z)$ satisfying \eqref{eq:refinement}. Let $\QQ=\{\QN\}_{j\ge {\ord[0]}}$ be a sequence of quadrature rules on $\mfd$ with
$\QN:=\QN^{(j)}:=\{(\wN,\pN)\in\R\times\mfd \setsep k=0,\dots,N_{j}\}$.
Define (semi-discrete) framelet system $\frsys(\Psi,\QQ)=\frsys(\Psi,\QQ;\mfd)$, $J\ge J_0$ as in  \eqref{eq:intro.frsys}. Suppose   that \eqref{eq:prodAssump} holds, $\supp\FT{\scala}\subseteq[0,1/c]$ with $c\ge2$ the minimal integer in \eqref{eq:prodAssump}, $\QN$ is exact for polynomials of degree $2^j$ for $j\ge J_0$, and \eqref{thmeq:cfr.tightness.a.mask.cond} holds.
Let $\fracoev=(\fracoev[j,k])_{k=0}^{N_j}$ and $\frbcoev{n}=(\frbcoev[j,k]{n})_{k=0}^{N_{j+1}}$, $n=1,\ldots,r$ be the approximation coefficient sequence and detail coefficient sequences of $f\in\Lpm{2}$ at scale $j$ given by
\begin{equation}\label{eq:coe.framelets}
    \fracoev[j,k] := \InnerL{f,\fra[j,k]},\; k = 0,\ldots, N_{j}, \quad\mbox{and}\quad \frbcoev[j,k]{n} := \InnerL{f,\frb[j,k]{n}}, \;  k = 0,\ldots, N_{j+1},\;  n=1,\dots,r,
\end{equation}
respectively. Then,
\begin{enumerate}[{\rm (i)}]
\item the coefficient sequence $\fracoev[j]$ is a  $(\Lambda_j,N_j)$-seqeunce and $\frbcoev[j]{n}, n=1,\ldots, r$, are  $(\Lambda_{j+1},N_{j+1})$-seqeunces for all $j\ge {\ord[0]}$;
\item  for any $j\ge {\ord[0]}+1$, the following decomposition relations hold:
\begin{equation}
\label{def:dec:tz}
  \fracoev[j-1] = (\fracoev\dconv \maska^\star)\downsmp,\quad \frbcoev[j-1]{n} = \fracoev\dconv (\maskb)^\star,\quad  n=1,\ldots,r;
\end{equation}
\item for any $j\ge {\ord[0]}+1$, the following reconstruction relation holds:
\begin{equation}\label{eq:reconstr.j.j1}
  \fracoev =  (\fracoev[j-1]\upsmp) \dconv \maska+\sum_{n=1}^r   \frbcoev[j-1]{n} \dconv \maskb.
\end{equation}
\end{enumerate}
\end{theorem}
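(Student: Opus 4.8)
The plan is to establish everything by passing to the discrete Fourier coefficient side, where the operators $\fft$, $\fft^*$, $\dconv$, $\downsmp$, $\upsmp$ all act diagonally and the refinement relation \eqref{eq:refinement} becomes a pointwise identity. First I would prove part (i): since $\supp\FT{\scala}\subseteq[0,1/c]$, the framelet $\fra[j,k]$ expands only over indices $\ell$ with $\eigvm/2^j\le 1/c$, i.e. $\eigvm\le 2^j/c$, so by the definition of $\Lambda_j=\dim\Pi_{2^j/c}$ the coefficient $\fracoev[j,k]=\InnerL{f,\fra[j,k]}$ is supported on those $\ell\le\Lambda_j$; combined with the quadrature exactness of degree $2^j$ (which makes $\fft[j]$ a bijection onto $l(\Lambda_j,N_j)$), this shows $\fracoev[j]$ is a genuine $(\Lambda_j,N_j)$-sequence, and likewise $\frb[j,k]{n}$ lives at scale $j+1$ so $\frbcoev[j]{n}\in l(\Lambda_{j+1},N_{j+1})$.

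Next I would compute the discrete Fourier coefficients of the coefficient sequences explicitly. From \eqref{eq:fr.coeff}, $\fracoev[j,k]=\sqrt{\wN[j,k]}\sum_\ell\overline{\FT{\scala}(\eigvm/2^j)}\,\Fcoem{f}\,\eigfm(\pN[j,k])$, so applying the definition \eqref{def:adjfft} of $\fft^*$ together with the quadrature exactness yields $\dfcav[j,\ell]=\overline{\FT{\scala}(\eigvm/2^j)}\Fcoem{f}$, and similarly $\widehat{\frbcoev[j]{n}}_\ell=\overline{\FT{\scalb^n}(\eigvm/2^j)}\Fcoem{f}$. With these in hand the decomposition \eqref{def:dec:tz} becomes a routine check: using \eqref{eq:dis.conv} and \eqref{eq:downsampling} the right-hand side $(\fracoev[j]\dconv\maska^\star)\downsmp$ has Fourier coefficient $\overline{\FS\maska(\eigvm/2^{j})}\,\overline{\FT{\scala}(\eigvm/2^j)}\Fcoem{f}$ at index $\ell\le\Lambda_{j-1}$, which by the refinement relation $\FT{\scala}(2\xi)=\FS\maska(\xi)\FT{\scala}(\xi)$ equals $\overline{\FT{\scala}(\eigvm/2^{j-1})}\Fcoem{f}=\dfcav[j-1,\ell]$; the detail relation $\frbcoev[j-1]{n}=\fracoev[j]\dconv(\maskb)^\star$ follows identically from the second equation in \eqref{eq:refinement}. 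For the reconstruction \eqref{eq:reconstr.j.j1} I would again move to Fourier coefficients: the right-hand side contributes $\left[\,\bigl|\FS\maska(\eigvm/2^{j})\bigr|^2+\sum_{n=1}^r\bigl|\FS\maskb(\eigvm/2^{j})\bigr|^2\right]\overline{\FT{\scala}(\eigvm/2^{j})}\Fcoem{f}$ at each index, and the perfect-reconstruction identity \eqref{thmeq:cfr.tightness.a.mask.cond}, valid for $\ell\in\sigma_\scala^j$, collapses the bracket to $1$, recovering $\dfcav[j,\ell]=\overline{\FT{\scala}(\eigvm/2^{j})}\Fcoem{f}$ and hence $\fracoev[j]$.

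The main obstacle, and the point that needs the most care, is bookkeeping of the \emph{support ranges} and the role of the quadrature exactness at the right degree. The upsampling operator $\upsmp$ raises a $(\Lambda_{j-1},N_{j-1})$-sequence to a $(\Lambda_j,N_j)$-sequence, and the convolution with $\maska$ only ``sees'' Fourier frequencies up to $\eigvm\le 2^{j-1}/c$ coming from $\fracoev[j-1]$, whereas the detail contribution reaches up to $\eigvm\le 2^j/c$; I must verify that these two supports recombine to give exactly the index range of $\fracoev[j]$ and that no frequency with $\FT{\scala}(\eigvm/2^{j})=0$ spoils the identity — this is precisely why \eqref{thmeq:cfr.tightness.a.mask.cond} is only imposed on $\sigma_\scala^j$, since outside that set $\dfcav[j,\ell]=0$ automatically. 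Throughout I would lean on the exactness of $\QN$ of degree $2^j$ and the product assumption \eqref{eq:prodAssump} to guarantee that every inner product of two eigenfunctions appearing (all of the form $\eigfm\overline{\eigfm[\ell']}\in\Pi_{2^j}$) is reproduced exactly by the quadrature, which is what makes $\fft^*\fft=\mathrm{id}$ on $l(\Lambda_j)$ and legitimizes passing freely between the spatial and Fourier descriptions.
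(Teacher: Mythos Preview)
Your proposal is correct and follows essentially the same route as the paper's proof: both arguments compute the discrete Fourier coefficients $\dfcav[j,\ell]=\overline{\FT{\scala}(\eigvm/2^j)}\Fcoem{f}$ and $\widehat{\frbcoev[j-1]{n}}_\ell=\overline{\FT{\scalb^n}(\eigvm/2^{j-1})}\Fcoem{f}$ explicitly, verify (ii) by the refinement relation \eqref{eq:refinement}, and verify (iii) by collapsing $|\FS\maska|^2+\sum_n|\FS\maskb|^2$ to $1$ via \eqref{thmeq:cfr.tightness.a.mask.cond}. Your extra remark about why indices outside $\sigma_\scala^j$ cause no trouble (since $\dfcav[j,\ell]=0$ there) is a point the paper leaves implicit, but otherwise the structure is identical.
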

\begin{proof}
For $\fracoev$ and $\frbcoev[j-1]{n}$ in \eqref{eq:coe.framelets}, by \eqref{eq:fr.coeff}, \eqref{eq:refinement} and $\supp\FT{\scala}\subseteq[0,1/c]$, we obtain $\supp\FT{\scalb^n}\subseteq[0,2/c]$ and
\[
\fracoev[j,k] =\sum_{\ell=0}^{\Lambda_j}\Fcoem{f}\: \conj{\FT{\scala}\left(\frac{\eigvm}{2^{j}}\right)}\sqrt{\wN}\: \eigfm(\pN[j,k]),\quad
\frbcoev[j-1,k]{n} =\sum_{\ell=0}^{\Lambda_j}\Fcoem{f}\: \conj{\FT{\scalb^n}\left(\frac{\eigvm}{2^{j-1}}\right)}\sqrt{\wN}\: \eigfm(\pN[j,k]).
\]
Hence, $\fracoev$ and $\frbcoev[j-1]{n}$, $n=1,\dots,r$ are all $(\Lambda_j,N_j)$-sequences with the discrete Fourier coefficients  $\dfcav[j]:=(\dfcav[j,\ell])_{\ell=0}^{\Lambda_{j}}$ and $\dfcbv[j-1]{n}:=(\dfcbv[j-1,\ell]{n})_{\ell=0}^{\Lambda_{j}}$  given by
\begin{equation*}
    \dfcav[j,\ell] = \:\Fcoem{f}\: \conj{\FT{\scala}\left(\frac{\eigvm}{2^{j}}\right)},\quad
     \dfcbv[j-1,\ell]{n}  = \:\Fcoem{f}\: \conj{\FT{\scalb^n}\left(\frac{\eigvm}{2^{j}}\right)},\quad \ell=0,\ldots,\Lambda_{j}.
\end{equation*}
Thus, item (i) holds.

We observe that $\fracoev[j-1]$ is  a $(\Lambda_{j-1},N_{j-1})$-sequence.
Using \eqref{eq:refinement}  gives,  for $k=0,\ldots, N_{j-1}$,
\begin{align*}
    \fracoev[j-1,k]&= \sum_{\ell=0}^{\Lambda_{j-1}}\Fcoem{f}\: \conj{\FT{\scala}\left(\frac{\eigvm}{2^{j-1}}\right)}\sqrt{\wN[j-1,k]}\:\eigfm(\pN[j-1,k])\notag\\
    &= \sum_{\ell=0}^{\Lambda_{j-1}}\Fcoem{f}\:\conj{\FT{\scala}\left(\frac{\eigvm}{2^{j}}\right)}\: \conj{\FT{\maska}\left(\frac{\eigvm}{2^{j}}\right)}\sqrt{\wN[j-1,k]}\: \eigfm(\pN[j-1,k])\\
    &= \sum_{\ell=0}^{\Lambda_{j}} \dfcav[j,\ell]\:\conj{\FT{\maska}\left(\frac{\eigvm}{2^{j}}\right)}\sqrt{\wN[j-1,k]}\: \eigfm(\pN[j-1,k])\\
    &= [(\fracoev \dconv \maska^\star)\downsmp](k).
\end{align*}
Similarly, for $k=0,\ldots, N_{j-1}$ and $n=1,\dots,r$,
\begin{equation*}
\frbcoev[j-1,k]{n}
=\sum_{\ell=0}^{\Lambda_{j}} \Fcoem{f}\: \conj{\FT{\scalb^{n}}\left(\frac{\eigvm}{2^{j-1}}\right)} \sqrt{\wN[j-1,k]}\: \eigfm(\pN[j-1,k])
=(\fracoev \dconv (\maskb)^\star)_k.
\end{equation*}
This proves \eqref{def:dec:tz}, thus, item (ii) holds.

Using $\fracoev[j-1]=(\fracoev\dconv\maska^{\star})\downsmp$ and $\frbcoev[j-1]{n}=\fracoev \dconv (\maskb)^\star$, we obtain
\[
\widetilde{\fracoev[]}: = (\fracoev[j-1]\upsmp)\dconv\maska + \sum_{n=1}^{r} \frbcoev[j-1]{n}\dconv\maskb
=(((\fracoev\dconv\maska^{\star})\downsmp)\upsmp)\dconv\maska
+ \sum_{n=1}^{r} (\fracoev \dconv (\maskb)^\star)\dconv \maskb.
\]
This with \eqref{eq:dis.conv}, \eqref{eq:downsampling}, \eqref{eq:upsampling} and \eqref{thmeq:cfr.tightness.a.mask.cond} gives
\[
\begin{aligned}
\widetilde{\fracoev[]}_k &= \sum_{\ell=0}^{\Lambda_j}\dfcav[j,\ell]\left( \left|\FT\maska\left(\frac{\eigvm}{2^j}\right)\right|^2+\sum_{n=1}^r
 \left|\FT\maskb\left(\frac{\eigvm}{2^j}\right)\right|^2\right) \sqrt{\wN}\:{\eigfm}(\pN)
\\&=\sum_{\ell=0}^{\Lambda_j}\dfcav[j,\ell] \sqrt{\wN}\:{\eigfm}(\pN)
\\&= \fracoev[j,k],\quad
\end{aligned}
\]
thus proving \eqref{eq:reconstr.j.j1}, which completes the proof.
\end{proof}

\medskip

Theorem~\ref{thm:dec:rec} gives the one-level framelet decomposition and reconstruction on $\mfd$, as illustrated by Figure~\ref{fig:algo:fb}.
Given a sequence $\fracoev[\ord]\in l(\Lambda_J,N_{J})$ with $J\ge {\ord[0]}\in\Z$, the \emph{multi-level framelet filter bank decomposition} from level $\ord$ to $\ord[0]$ is given by
\begin{equation*}
\label{def:dec:tz:J}
  \fracoev[j-1] = (\fracoev\dconv \maska^\star)\downsmp,\quad \frbcoev[j-1]{n} = \fracoev \dconv ({\maskb})^\star,\quad  n=1,\ldots,r,\quad j = J,\ldots, {\ord[0]}+1.
\end{equation*}
The corresponding multi-level framelet \emph{analysis operator}
\begin{equation*}
\label{def:analOp}
\analOp: l(\Lambda_J,\QN[N_{J}]) \rightarrow l(N_{J})^{1\times r}\times l(N_{J-1})^{1\times r}\times\cdots\times l(N_{{\ord[1]}})^{1\times r}\times l(N_{{\ord[0]}})
\end{equation*}
is defined as
\begin{equation}
\label{def:dec:tr:J}
\analOp\fracoev[\ord] = (\frbcoev[\ord-1]{1},\ldots, \frbcoev[\ord-1]{r}, \ldots, \frbcoev[{\ord[0]}]{1},\ldots,\frbcoev[{\ord[0]}]{r}, \fracoev[{\ord[0]}]),\quad \fracoev[\ord]\in l(\Lambda_J,N_{J}).
\end{equation}
For a sequence  $(\frbcoev[\ord-1]{1},\ldots, \frbcoev[\ord-1]{r}, \ldots, \frbcoev[{\ord[0]}]{1},\ldots,\frbcoev[{\ord[0]}]{r}, \fracoev[{\ord[0]}])$ of framelet coefficient sequences obtained from a multi-level decomposition,
the \emph{multi-level framelet filter bank reconstruction} is given by
\begin{equation*}
\label{def:rec:sd:J}
  \fracoev = (\fracoev[j-1]\upsmp) \dconv \maska + \sum_{n=1}^{r} \frbcoev[j-1]{n}\dconv \maskb,\quad j = {\ord[0]}+1,\ldots,J.
\end{equation*}
The corresponding multi-level framelet \emph{synthesis operator}
\begin{equation*}
\label{def:synOp}
\synOp:  l(N_{J})^{1\times r}\times l(N_{J-1})^{1\times r}\times\cdots\times l(N_{{\ord[1]}})^{1\times r}\times l(N_{{\ord[0]}}) \rightarrow l(\Lambda_J,N_{J})
\end{equation*}
is defined as
\[
\synOp( \frbcoev[\ord-1]{1},\ldots, \frbcoev[\ord-1]{r}, \ldots, \frbcoev[{\ord[0]}]{1},\ldots,\frbcoev[{\ord[0]}]{r}, \fracoev[{\ord[0]}]) = \fracoev[\ord].
\]
When the condition of Theorem~\ref{thm:dec:rec} is satisfied, the analysis and synthesis operators are invertible on $l(\Lambda_j,N_j)$ for any $j\ge {\ord[0]}$, i.e. $\synOp\analOp = \Id|_{l(\Lambda_j,N_j)}$, where $\Id$ is the identity operator.
The two-level decomposition and reconstruction framelet filter bank transforms are the processes using the one-level twice, as depicted by the diagram in Figure~\ref{fig:multi-level-FMT}. Similarly, the multi-level framelet filter bank transforms are recursive use of the one-level. The detailed algorithmic steps of the decomposition and reconstruction are described in Algorithms~\ref{algo:decomp.multi.level} and \ref{algo:reconstr.multi.level} in Section~\ref{sec:fmt}.

\begin{figure}[htb]
\begin{minipage}{\textwidth}
	\centering
\begin{minipage}{\textwidth}
\begin{center}
\begin{tikzpicture}[thick,scale=0.66, every node/.style={scale=0.67}, nonterminal/.style={rectangle, minimum size=5.5mm, very thick, draw=red!50!black!50,top color=white, bottom color=red!50!black!20,font=\itshape},
terminal/.style={rectangle,minimum size=6mm,rounded corners=1mm,very thick,draw=black!50,top color=white,bottom color=black!20,font=\ttfamily},
sum/.style={circle,minimum size=1mm,very thick,draw=black!50,top color=white,bottom color=black!20,font=\ttfamily},
skip loop/.style={to path={-- ++(0,#1) -| (\tikztotarget)}},
hv path/.style={to path={-| (\tikztotarget)}},
vh path/.style={to path={|- (\tikztotarget)}},
,>=stealth',thick,black!50,text=black,
every new ->/.style={shorten >=1pt},
graphs/every graph/.style={edges=rounded corners}]
\matrix[row sep=1mm, column sep=5.5mm] {
& & & & & \node (ma2) [terminal] {$\dconv[j-1]\maska^\star$}; & \node (dsa2) [terminal] {$\downsmp[j-1] \hspace{-0.5mm}$}; & \node (pra2) [nonterminal] {processing};
 & \node (usa2) [terminal] {$\upsmp[j-1] \hspace{-0.5mm} $}; & \node (mas2) [terminal] {$\dconv[j-1]\maska$}; & & & & &\\
& & \node (ma1) [terminal] {$\dconv\maska^\star$}; & \node (dsa1) [terminal] {$\downsmp \hspace{-0.5mm}$}; & \node (p2) [coordinate] {}; & & & & & &
\node (plus2) [sum] {$+_{r}$}; & \node (usa1) [terminal] {$\upsmp \hspace{-0.5mm} $}; & \node (mas1) [terminal] {$\dconv\maska$}; & & \\
& & & & & \node (mb2) [terminal] {$\dconv[j-1]{(\maskb)}^\star$}; &  & \node (prb2) [nonterminal] {processing};
 &  & \node (mbs2) [terminal] {$\dconv[j-1]\maskb$}; & & & & &\\
\node (in) [nonterminal] {input}; & \node (p1) [coordinate] {}; &&&&& &&&&& && \node (plus1) [sum] {$+_{r}$}; & \node (out) [nonterminal] {output};\\
& & \node (mb1) [terminal] {$\dconv{(\maskb)}^\star$}; &  && && \node (prb1) [nonterminal] {processing}; && &&  & \node (mbs1) [terminal] {$\dconv\maskb$}; & & \\
};

\graph [use existing nodes] {
ma2 -> dsa2 -> pra2 -> usa2 -> mas2;
ma1 -> dsa1 -> p2 -> [vh path] {ma2,mb2}; {mas2,mbs2} -> [hv path] plus2 -> usa1 -> mas1;
mb2 -> prb2 -> mbs2;
in -- p1 -> [vh path] {ma1, mb1}; {mas1,mbs1} -> [hv path] plus1 -> out;
mb1 -> prb1 -> mbs1;
};
\end{tikzpicture}
\end{center}\vspace{-1mm}
\end{minipage}
\begin{minipage}{0.8\textwidth}
\caption{Two-level framelet filter bank decomposition and reconstruction based on the filter bank $\{\maska;\maskb[1],\ldots,\maskb[r] \}$.}
\label{fig:multi-level-FMT}
\end{minipage}
\end{minipage}
\end{figure}

\subsection{Fast framelet filter bank transforms}\label{sec:fmt}
The decomposition in \eqref{def:dec:tz} and the reconstruction in \eqref{eq:reconstr.j.j1} can be rewritten in terms of discrete Fourier transforms (DFTs) and adjoint DFTs on $\mfd$ as
\[
  \fracoev[j-1] = \fft[j-1](\widehat{\fracoev\dconv \maska^\star}),\quad \frbcoev[j-1]{n} = \fft[j](\widehat{\fracoev\dconv {(\maskb)}^\star}),\quad n = 1,\ldots, r
\]
and
\[
  \fracoev = \left(\fft[j]^*(\fracoev[j-1])\right) \dconv \maska + \sum_{n=1}^r \left(\fft[j]^*(\frbcoev[j-1]{n})\right) \dconv \maskb.
\]
The decomposition and reconstruction are thus combinations of discrete Fourier transforms (or the adjoint DFTs) with discrete convolutions. As ${\fracoev\dconv \mask}$ is simply point-wise multiplication in the frequency domain, the computational complexity of the algorithms is determined by the computational complexity of DFTs and adjoint DFTs. Assuming fast discrete Fourier transforms on $\mfd$, the multi-level framelet filter bank transforms can be efficiently implemented in the sense that the computational steps are in proportion to the size of the input data. We say these algorithms \emph{fast framelet filter bank transforms on $\mfd$}, or {\fmt}s.

Let $(\wN[k],\pN[k])_{k=0}^N$ a quadrature rule on $\mfd$, $\fracoev[]=(\fracoev[k])_{k = 0}^ N$ a data sequence with respect to $(\wN[k],\pN[k])_{k=0}^N$ in the time domain, and
$\dfcav[]=(\dfcav[\ell])_{\ell = 0}^M$ the sequence of discrete Fourier coefficients of $\fracoev[]$ in the frequency domain. The discrete Fourier transform for the sequence of Fourier coefficients $\dfcav[]$ on $\mfd$ is given by
\begin{equation}\label{def:fft2}
(\fft[]  \dfcav[])_k :=   \sum_{\ell=0}^M\dfcav[\ell]\: \sqrt{\wN[k]}\eigfm(\pN[k]),\quad k = 0,\ldots, N,
\end{equation}
and the adjoint discrete Fourier transform for the sequence $\fracoev[]$ on $\mfd$ is given by
\begin{equation}\label{def:adjfft2}
(\fft[]^*\fracoev[])_\ell :=    \sum_{k=0}^{N}  \fracoev[k]\: \sqrt{\wN[k]}\:\conj{\eigfm(\pN[k])}, \quad \ell = 0,\ldots, M,
\end{equation}
see \eqref{def:fft} and \eqref{def:adjfft}.
Without loss of generality, we assume  $M\le N$.

By ``fast'' we  mean that the computation of $(\fft[]  \dfcav[])_{k=0}^N$ given ($\dfcav[\ell])_{\ell=0}^M$ in \eqref{def:fft2} (or the computation of $(\fft[]^*\fracoev[])_{\ell=0}^{M}$ in \eqref{def:adjfft2}) can be realized in order $\bigo{}{N}$ flops up to a log factor similar to the standard FFT algorithms on $\R$ ($\eigfm=\exp(-2\pi i \ell\cdot)$ in $\R$). The inverse discrete Fourier transform $\fft[]^{-1}$ can be implemented in the same order $\bigo{}{N}$ by solving the normal equation $\fft[]^*\fft[] \dfcav[]= \fft[]^*\fracoev[]$ using conjugate gradient methods (CG).

Fast algorithms for DFTs and adjoint DFTs exist in typical manifolds, for example, the fast spherical harmonic transforms on the sphere, the fast discrete Fourier transforms on the  torus  and the fast Legendre transforms on the hypercube, see e.g. \cite{DrHe1994,HaTo2014,HeRoKoMo2003,KeKuPo2007,RoTy2006}.

Algorithms~\ref{algo:decomp.multi.level} and \ref{algo:reconstr.multi.level} below show the detailed algorithmic steps for the multi-level {\fmt}s for the decomposition and reconstruction of the framelet coefficient sequences on a manifold assuming the condition of Theorem~\ref{thm:dec:rec}.

We give a brief analysis of the computational complexity analysis of the {\fmt} algorithms (assuming $J\ge J_0=0$), as follows.

In Algorithm~\ref{algo:decomp.multi.level}, the line~1 is of order $\bigo{}{N_J}$; the lines~2--8 together are of order $\bigo{}{r(N_{J-1}+\cdots+N_0)}$; the line~9 is of order $\bigo{}{N_0}$; the total complexity is $\bigo{}{N_J+r(N_{J-1}+\cdots+N_0)+N_0}$.

In Algorithm~\ref{algo:reconstr.multi.level}, the line~1 is of order $\bigo{}{N_0}$; the lines~2--7 together are of order
$\bigo{}{r(N_0+\cdots+N_{J-1})}$; the line~8 is of order $\bigo{}{N_J}$; the total complexity is $\bigo{}{N_0+r(N_0+\cdots+N_{J-1})+N_J}$.

If the numbers of the nodes of the quadrature rules $\QN$ and $\QN[j-1]$ in consecutive levels satisfy $\frac{N_j}{N_{j-1}} \asymp c_0$ for all $j\ge1$ with $c_0>1$, the computational complexities of both the {\fmt} decomposition and reconstruction are of order $\bigo{}{(r+1)N_J}$ for the sequence $\fracoev[\ord]$ of the framelet coefficients of size $N_{\ord}$. Note that $\bigo{}{(r+1)N_J}$ is also the order of the redundancy rate of the {\fmt} algorithms. For example, on the unit sphere $\sph{2}$, using symmetric spherical designs (see \cite{Womersley_ssd_URL}), the number of the quadrature nodes $N_j\sim 2^{2j+1}$, then $\frac{N_j}{N_{j-1}} \asymp 4$, and the FFT on $\sph{2}$ has the complexity $\bigo{}{N\sqrt{\log N}}$ with $N$ the size of the input data, see e.g. \cite{KeKuPo2007}, thus, the {\fmt} on $\sph{2}$ has the computational complexity $\bigo{}{N\sqrt{\log N}}$.

\medskip
\IncMargin{1em}
\begin{algorithm}[H]
\SetKwData{step}{Step}
\SetKwInOut{Input}{Input}\SetKwInOut{Output}{Output}
\BlankLine
\Input{$\fracoev[\ord]$ -- a $(\Lambda_J,N_J)$-sequence}

\Output{$\bigl(\{\frbcoev[\ord-1]{n},\frbcoev[\ord-2]{n},\dots,\frbcoev[J_0]{n}\}_{n=1}^{r},\fracoev[J_0]\bigr)$ as in \eqref{def:dec:tr:J}}

$\fracoev[\ord] \longrightarrow \dfcav[\ord]$ \tcp*[f]{adjoint FFT}\\

\For{$j\leftarrow \ord$ \KwTo $J_0+1$}{
    $\dfcav[j-1] \longleftarrow \dfcav[j,\cdot] \: \conj{\FT{\maska}}\left(2^{-j}\eigvm[\cdot]\right)$ \tcp*[f]{downsampling \& convolution}\\[1mm]

\For{$n\leftarrow 1$ \KwTo $r$}{

    $\dfcbv[j-1]{n} \longleftarrow  \dfcav[j,\cdot] \: \conj{\FT{\maskb}}\left(2^{-j}\eigvm[\cdot]\right)$ \tcp*[f]{convolution}\\[1mm]

    $\frbcoev[j-1]{n} \longleftarrow \dfcbv[j-1]{n}$ \tcp*[f]{ FFT}\\
    }
}

$\fracoev[J_0] \longleftarrow \dfcav[J_0]$ \tcp*[f]{ FFT}

\caption{Multi-Level {\fmt}: Decomposition}
\label{algo:decomp.multi.level}
\end{algorithm}
\DecMargin{1em}\vspace{3mm}

\IncMargin{1em}
\begin{algorithm}[H]
\SetKwData{step}{Step}
\SetKwInOut{Input}{Input}\SetKwInOut{Output}{Output}
\BlankLine
\Input{$\bigl(\{\frbcoev[\ord-1]{n},\frbcoev[\ord-2]{n},\dots,\frbcoev[J_0]{n}\}_{n=1}^{r},\fracoev[J_0]\bigr)$ as in \eqref{def:dec:tr:J}}

\Output{$\fracoev[\ord]$ -- a $(\Lambda_{J},N_J)$-sequence}

$\dfcav[J_0] \longleftarrow \fracoev[J_0]$ \tcp*[f]{adjoint   FFT }\\

\For{$j\leftarrow J_0+1$ \KwTo $\ord$}{

\For{$n\leftarrow 1$ \KwTo $r$}{

$\dfcbv[j-1]{n}\longleftarrow \frbcoev[j-1]{n}$ \tcp*[f]{adjoint  FFT}\\

}

$\dfcav \longleftarrow (\dfcav[j-1,\cdot])\:\FT{\maska}\left(2^{-j}\eigvm[\cdot]\right)  + \sum_{n=1}^{r}\dfcbv[j,\cdot]{n} \;\FT{\maskb}\left(2^{-j}\eigvm[\cdot]\right)$\tcp*[f]{upsampling \& convolution}\\

}

$\fracoev[\ord] \longleftarrow \dfcav[\ord]$ \tcp*[f]{FFT}\\
\caption{Multi-Level {\fmt}: Reconstruction}\label{algo:reconstr.multi.level}
\end{algorithm}
\DecMargin{1em}

\section{Multiscale data analysis on the sphere}
\label{sec:fmtS2expr}

In this section, we construct tight framelets on the sphere $\sph{2}$ and present several examples to demonstrate data analysis on $\sph{2}$ using tight framelets.

\subsection{Framelets on the sphere}
\label{sec:fmtS2}
In this subsection, we give an explicit construction of framelets on $\sph{2}$ to illustrate the results in Section~\ref{sec:tight.framelets}. For simplicity, we consider the filter bank $\filtbk=\{\maska;\maskb[1],\maskb[2]\}$  with  two high-pass filters. We remark that $\filtbk$ can be extended to a filter bank with arbitrary number of high-pass filters in a similar manner.

Define the filter bank $\filtbk:=\{\maska;\maskb[1],\maskb[2]\}$ by their Fourier series as follows.
%
\begin{subequations}\label{eqs:mask.numer.s3}
\begin{align}
  \FT{\maska}(\xi)&: =
  \left\{\begin{array}{ll}
    1, & |\xi|<\frac{1}{8},\\[1mm]
    \cos\bigl(\frac{\pi}{2}\hspace{0.3mm} \nu(8|\xi|-1)\bigr), & \frac{1}{8} \le |\xi| \le \frac{1}{4},\\[1mm]
    0, & \frac14<|\xi|\le\frac12,
    \end{array}\right. \\[1mm]
  \FT{\maskb[1]}(\xi)&:  =\left\{\begin{array}{ll}
    0, & |\xi|<\frac{1}{8},\\[1mm]
    \sin\bigl(\frac{\pi}{2}\hspace{0.3mm} \nu(8|\xi|-1)\bigr), & \frac{1}{8} \le |\xi| \le \frac{1}{4},\\[1mm]
    \cos\bigl(\frac{\pi}{2}\hspace{0.3mm} \nu(4|\xi|-1)\bigr), & \frac14<|\xi|\le\frac12.
    \end{array}\right.\\[1mm]
  \FT{\maskb[2]}(\xi)&:
  =\left\{\begin{array}{ll}
    0, & |\xi|<\frac{1}{4},\\[1mm]
    \sin\bigl(\frac{\pi}{2}\hspace{0.3mm} \nu(4|\xi|-1)\bigr), & \frac{1}{4} \le |\xi| \le \frac{1}{2},
    \end{array}\right.
\end{align}
\end{subequations}
where
\begin{equation*}
  \nu(t) := \scalg[3](t)^{2} = t^{4}(35 - 84t + 70t^{2} - 20 t^{3}), \quad t\in\Rone,
\end{equation*}
as in \cite[Chapter~4]{Daubechies1992}. It can be verified that
\[
|\FT{\maska}(\xi)|^2+|\FT{\maskb[1]}(\xi)|^2+|\FT{\maskb[2]}(\xi)|^2 = 1 \quad\forall \xi\in[0,1/2],
\]
which implies \eqref{thmeq:cfr.tightness.a.mask.cond}. The associated framelet generators $\Psi=\{\scala; \scalb^1,\scalb^2\}$ satisfying \eqref{eq:refinement} and \eqref{thmeq:cfr.scal.j.j1} is explicitly given by
\begin{subequations}\label{eqs:scal.numer.s3}
\begin{align}
  \FT{\scala}(\xi)&=
  \left\{\begin{array}{ll}
    1, & |\xi|<\frac{1}{4},\\[1mm]
    \cos\bigl(\frac{\pi}{2}\hspace{0.3mm} \nu(4|\xi|-1)\bigr), & \frac{1}{4} \le |\xi| \le \frac{1}{2},\\[1mm]
    0, & \hbox{else},
    \end{array}\right. \\[1mm]
  \FT{\scalb^1}(\xi)&= \left\{\begin{array}{ll}
    \sin\left(\frac{\pi}{2}\hspace{0.3mm} \nu(4|\xi|-1)\right), & \frac{1}{4}\le|\xi|<\frac{1}{2},\\[1mm]
    \cos^2\left(\frac{\pi}{2}\hspace{0.3mm} \nu(2|\xi|-1)\right), & \frac{1}{2} \le |\xi| \le 1,\\[1mm]
    0, & \hbox{else},
    \end{array}\right.\\[1mm]
  \FT{\scalb^2}(\xi)&= \left\{\begin{array}{ll}
   0, &|\xi|<\frac{1}{2},\\[1mm]
    \cos\left(\frac{\pi}{2}\hspace{0.3mm} \nu(2|\xi|-1)\right) \sin\left(\frac{\pi}{2}\hspace{0.3mm} \nu(2|\xi|-1)\right), & \frac{1}{2} \le |\xi| \le 1,\\[1mm]
    0, & \hbox{else}.
    \end{array}\right.
\end{align}
\end{subequations}
Then, $\FS{\maska},\FS{\maskb[1]},\FS{\maskb[2]},\FT{\scala},\FT{\scalb^{1}},\FT{\scalb^{2}}$ are all in $\CkR[4-\epsilon]$ with arbitrarily small and positive $\epsilon$ \cite[p.~119]{Daubechies1992}, and $\supp\FT{\scala}\subseteq [0,1/2]$ and $\supp\FT{\scalb^{n}}\subseteq [1/4,1]$, $n=1,2$. Also, the refinable function $\FT{\scala}$ satisfies \eqref{thmeq:cfr.scala.lim}.

Figure~\ref{fig:mask} shows the pictures of the filters $\FS{\maska}$, $\FS{\maskb[1]}$ and $\FS{\maskb[2]}$ of \eqref{eqs:mask.numer.s3}. Figure~\ref{fig:scal} shows the corresponding functions $\FT{\scala}$, $\FT{\scalb^1}$ and $\FT{\scalb^2}$, whose supports are subsets of $[0,1/2]$, $[1/4,1]$ and $[1/2,1]$.
\begin{figure}
\begin{minipage}{\textwidth}
\centering	
\begin{minipage}{\textwidth}
  \centering
  \begin{minipage}{0.47\textwidth}
  \includegraphics[trim = 0mm 0mm 0mm 0mm, width=0.9\textwidth]{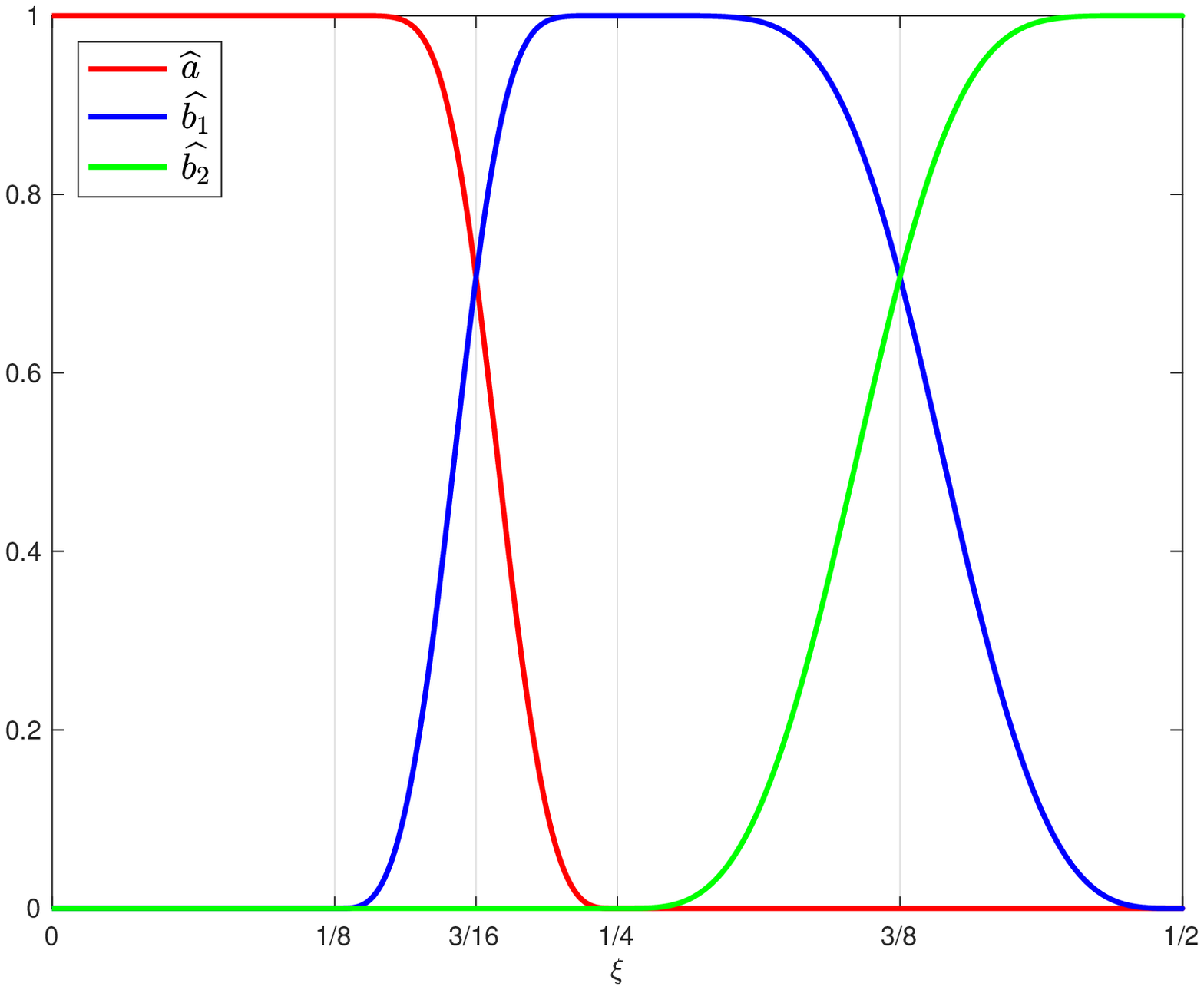}
  \subcaption{Filters $\FS{\maska}$, $\FS{\maskb[1]}$ and $\FS{\maskb[2]}$}\label{fig:mask}
  \end{minipage}
  \begin{minipage}{0.47\textwidth}
  \includegraphics[trim = 0mm 0mm 0mm 0mm, width=0.9\textwidth]{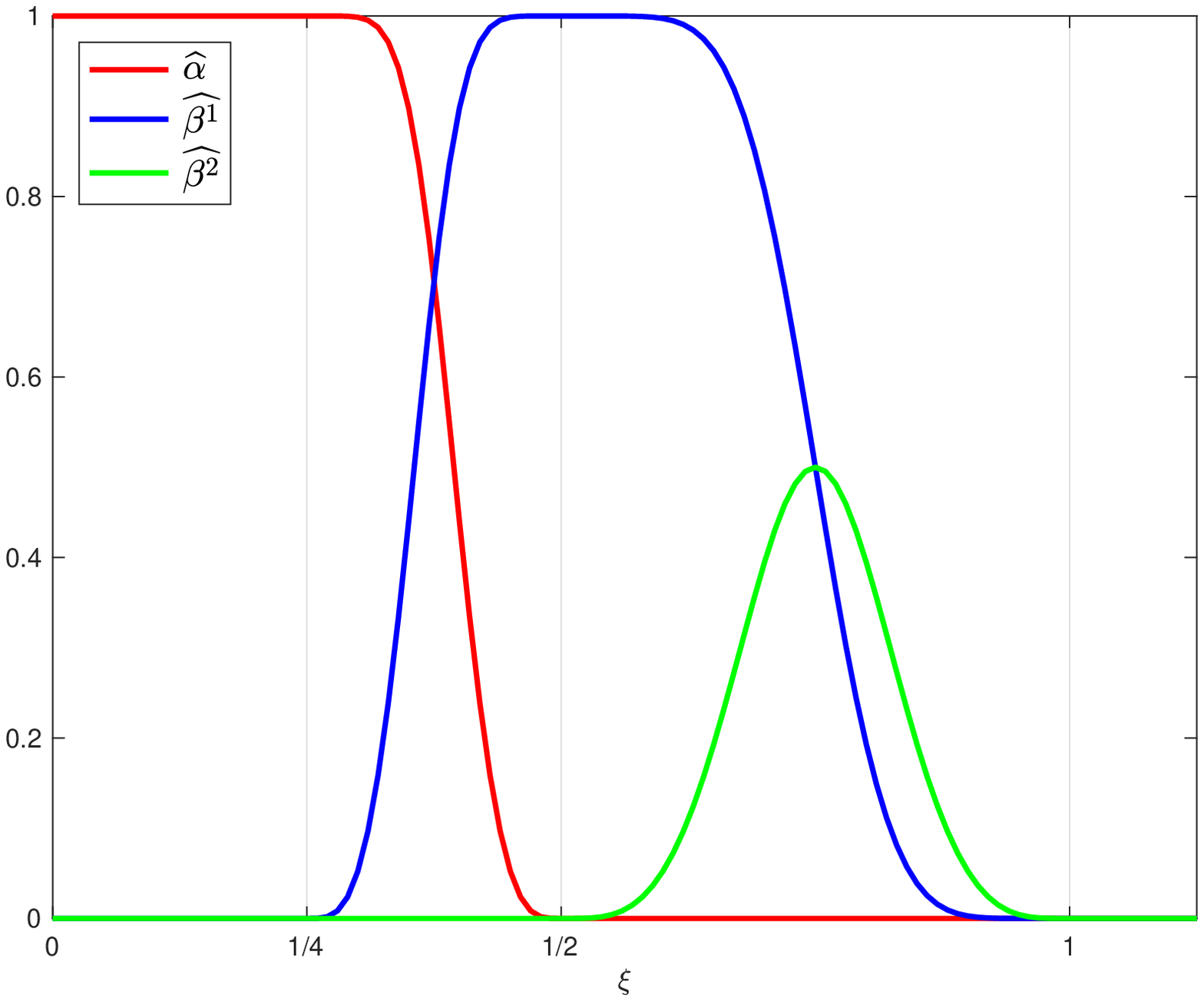}
  \subcaption{Functions $\FT{\scala}$, $\FT{\scalb^{1}}$, and $\FT{\scalb^{2}}$}\label{fig:scal}
  \end{minipage}
  \end{minipage}
\begin{minipage}{0.8\textwidth}
\vspace{1mm}
\caption{Filters $\{\FS{\maska};\FS{\maskb[1]},\FS{\maskb[2]}\}$ and functions $\{\FT{\scala};\FT{\scalb^{1}},\FT{\scalb^{2}}\}$.}\label{fig:mask.scal}
\end{minipage}
\end{minipage}
\end{figure}

For the unit sphere $\sph{2}\subset\Rd[3]$, the Laplace-Beltrami operator $\LBm$ has the spherical harmonics $\{\shY: \ell\in\N_0, |m|\le \ell\}$ as eigenfunctions with (negative) eigenvalues $-\eigvs^{2}=-\ell(\ell+1)$:
\begin{equation*}
    \LBm\: \shY = -\ell(\ell+1) \shY,\quad  m=-\ell,\ldots, \ell,\; \ell=0,1,\ldots,
\end{equation*}
 see e.g. \cite[Chapter~1]{DaXu2013} for details.
Let $(\theta,\varphi)$ with $\theta\in[0,\pi]$ and $\varphi\in [0,2\pi)$ be the spherical coordinates for $\PT{x}\in\sph{2}$, satisfying
$\PT{x}=(\cos\theta\sin\varphi,\cos\theta\sin\varphi,\sin\theta)$.
Using the spherical coordinates, the spherical harmonics can be explicitly written as
\begin{equation*}
  \shY(\PT{x}):=\shY(\theta,\varphi) := \sqrt{\frac{2\ell+1}{4\pi}\frac{(\ell-m)!}{(\ell+m)!}} \:\aLegen{m}(\cos\theta)\: e^{\imu m\varphi}, \quad m=-\ell,\ldots, \ell,\; \ell=0,1,\dots,
\end{equation*}
where $\aLegen{m}(t)$, $-1\le t\le 1$, is the associated Legendre polynomial of degree $\ell$ and order $m$, see e.g. \cite{DaXu2013}. Let $\memf$ be the surface measure on the sphere $\sph{2}$ satisfying $\memf(\sph{2})=1$.  Then $\{(\shY,\eigvs)\}_{m\le|\ell|,\ell\in\N_0}$ forms an orthonormal eigen-pair for $\Lpm[\sph{2},\memf]{2}:=\Lpm[\sph{2}]{2}$.
The (diffusion) polynomial space $\Pi_n$ is given by
\begin{equation}\label{eq:PnS2}
\Pi_n:=\spann\{\shY\setsep \lambda_{\ell,m}\le n\}=\spann\{\shY\setsep \ell<n,\: m=-\ell,\dots,\ell\}.
\end{equation}

The continuous framelets $\cfra(\PT{x}), \cfrb{1}(\PT{x})$ and $ \cfrb{2}(\PT{x})$ on the sphere $\sph{2}$ are
\begin{align*}
  \cfra(\PT{x})&:= \sum_{\ell=0}^{\infty}\sum_{m=-\ell}^{\ell} \FT{\scala}\left(\frac{\eigvs}{2^{j}}\right)\conj{\shY(\PT{y})}\shY(\PT{x}),\\
  \cfrb{n}(\PT{x})&:= \sum_{\ell=0}^{\infty}\sum_{m=-\ell}^{\ell} \FT{\scalb^n}\left(\frac{\eigvs}{2^{j}}\right)\conj{\shY(\PT{y})}\shY(\PT{x}), \quad n = 1,2.
\end{align*}
By (iv) or (v) of Theorem~\ref{thm:cfr.tightness} and the construction of $\Psi$ and $\filtbk$ in \eqref{eqs:scal.numer.s3} and \eqref{eqs:mask.numer.s3}, the continuous framelet system $\cfrsys(\Psi)=\cfrsys(\{\scala;\scalb^1,\scalb^2\})$ on $\sph{2}$ is a tight frame for $\Lpm[\sph{2}]{2}$ for any $J\in\Z$.

Given $\QN$ a quadrature rule on $\sph{2}$, the discrete framelets $\fra(\PT{x}), \frb{1}(\PT{x})$ and $\frb{2}(\PT{x})$ on the sphere $\sph{2}$ are
\begin{align*}
  \fra(\PT{x})&:= \sqrt{\wN}\sum_{\ell=0}^{\infty}\sum_{m=-\ell}^{\ell} \FT{\scala}\left(\frac{\eigvs}{2^{j}}\right)\conj{\shY(\pN)}\shY(\PT{x}),\\
  \frb{n}(\PT{x})&:= \sqrt{\wN[j+1,k]}\sum_{\ell=0}^{\infty}\sum_{m=-\ell}^{\ell} \FT{\scalb^n}\left(\frac{\eigvs}{2^{j}}\right)\conj{\shY(\pN[j+1,k])}\shY(\PT{x}),\quad n =1,2.
\end{align*}
%
%
As the supports of $\FT\scala$, $\FT{\scalb^1}$ and $\FT{\scalb^2}$ are subsets of $[0,1/2]$, $[0,1]$ and $[0,1]$, $\fra\in\Pi_{2^{j-1}}$ and $\frb{1}, \frb{2}\in\Pi_{2^{j}}$.
If $\QN$ is a polynomial-exact quadrature rule of degree $2^j$ for all $j\in\Z$, then by Corollary~\ref{cor:framelet.tightness2}, the framelet system $\frsys(\Psi,\QQ)=\frsys(\{\scala;\scalb^1,\scalb^2\},\{\QN\}_{j\ge J})$ is a  semi-discrete tight frame for $\Lpm[\sph{2}]{2}$ for all $J\in\Z$.

Figure~\ref{fig:fr.S2} shows the pictures of framelets $\fra[6,\PT{y}]$, $\frb[6,\PT{y}]{1}$ and $\frb[6,\PT{y}]{2}$ on $\sph{2}$ at scale $j=6$ and with translation at $\PT{y}=(0,0,1)$. It shows that $\frb[6,\PT{y}]{1}$ and $\frb[6,\PT{y}]{2}$ are more ``concentrated'' at the north pole, which enables them to carry more detailed information in data analysis.

\begin{figure}[htb]
\begin{minipage}{\textwidth}
\centering
\begin{minipage}{\textwidth}
  \centering
  \begin{minipage}{0.30\textwidth}
  \centering
  \includegraphics[trim = 10mm 0mm 0mm 1mm, width=0.8\textwidth]{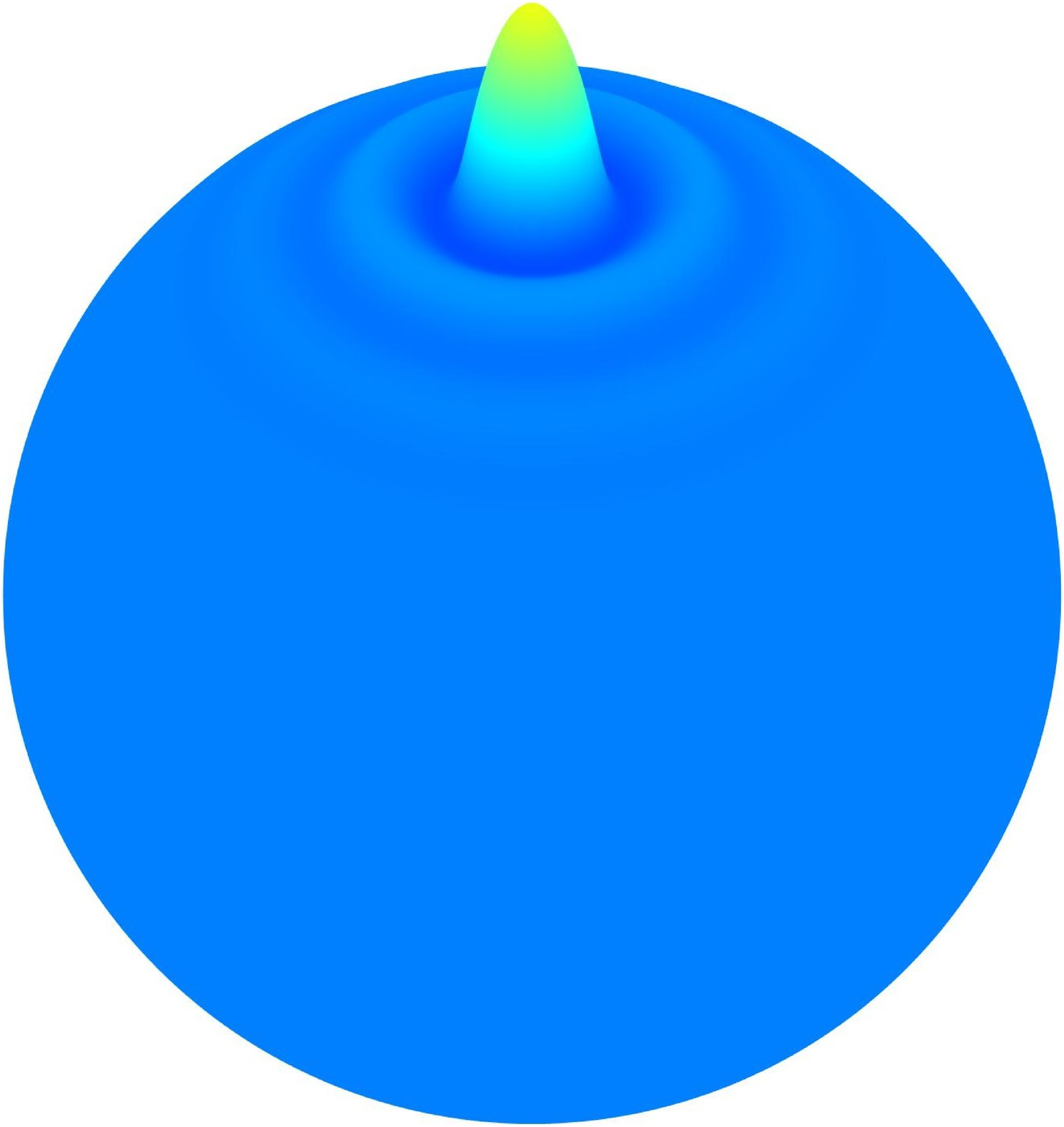}\\
  \subcaption{Framelet $\fra[6,\PT{y}]$}\label{fig:fra.S2}
  \end{minipage}
  \begin{minipage}{0.30\textwidth}
  \centering
  \includegraphics[trim = 10mm 0mm 0mm 1mm, width=0.8\textwidth]{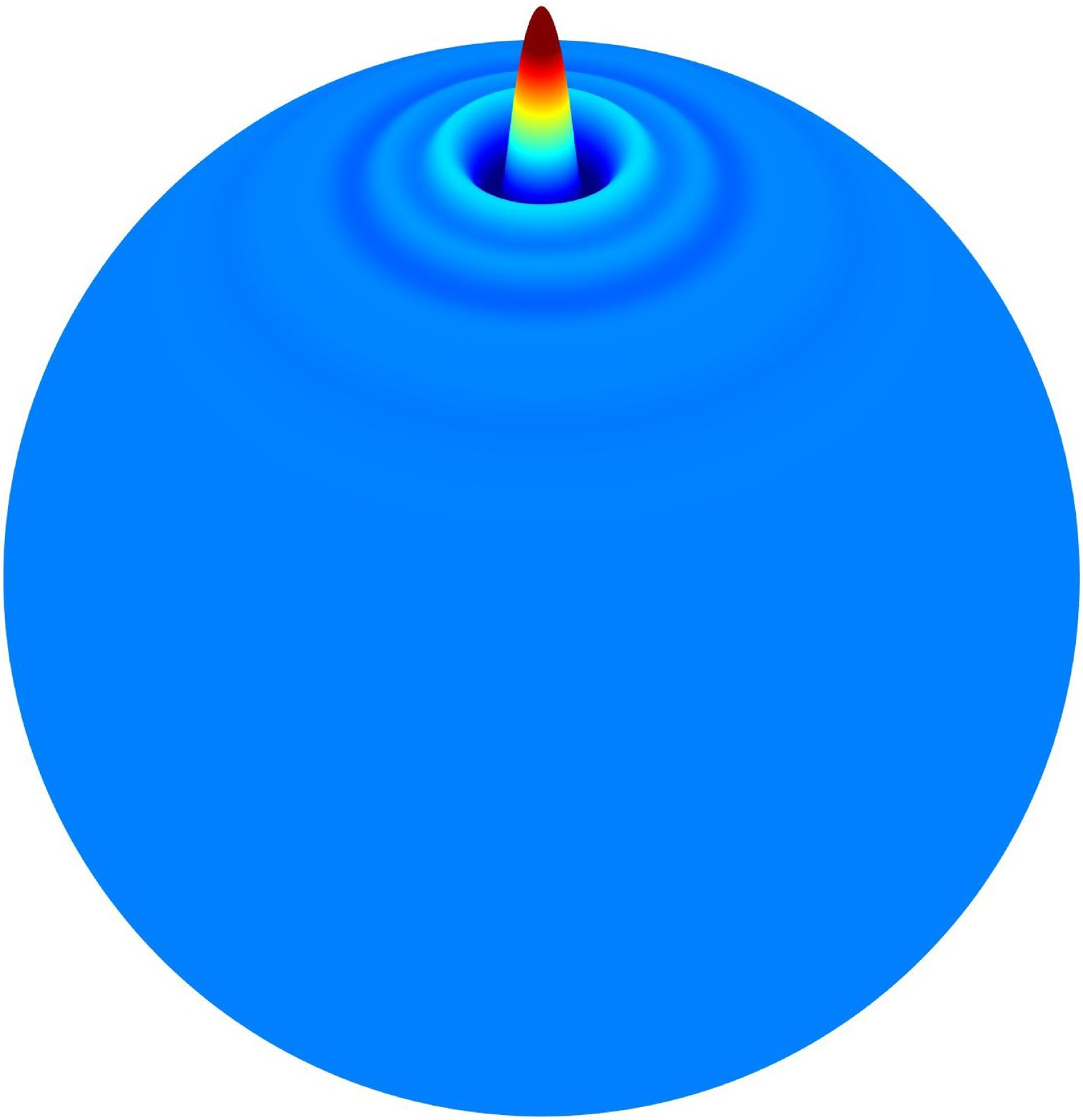}\\
  \subcaption{Framelet $\frb[6,\PT{y}]{1}$}\label{fig:frb.S2}
  \end{minipage}
  \hspace{2mm}
  \begin{minipage}{0.30\textwidth}
  \centering
  \includegraphics[trim = 0mm 0mm 0mm 1mm, width=0.98\textwidth]{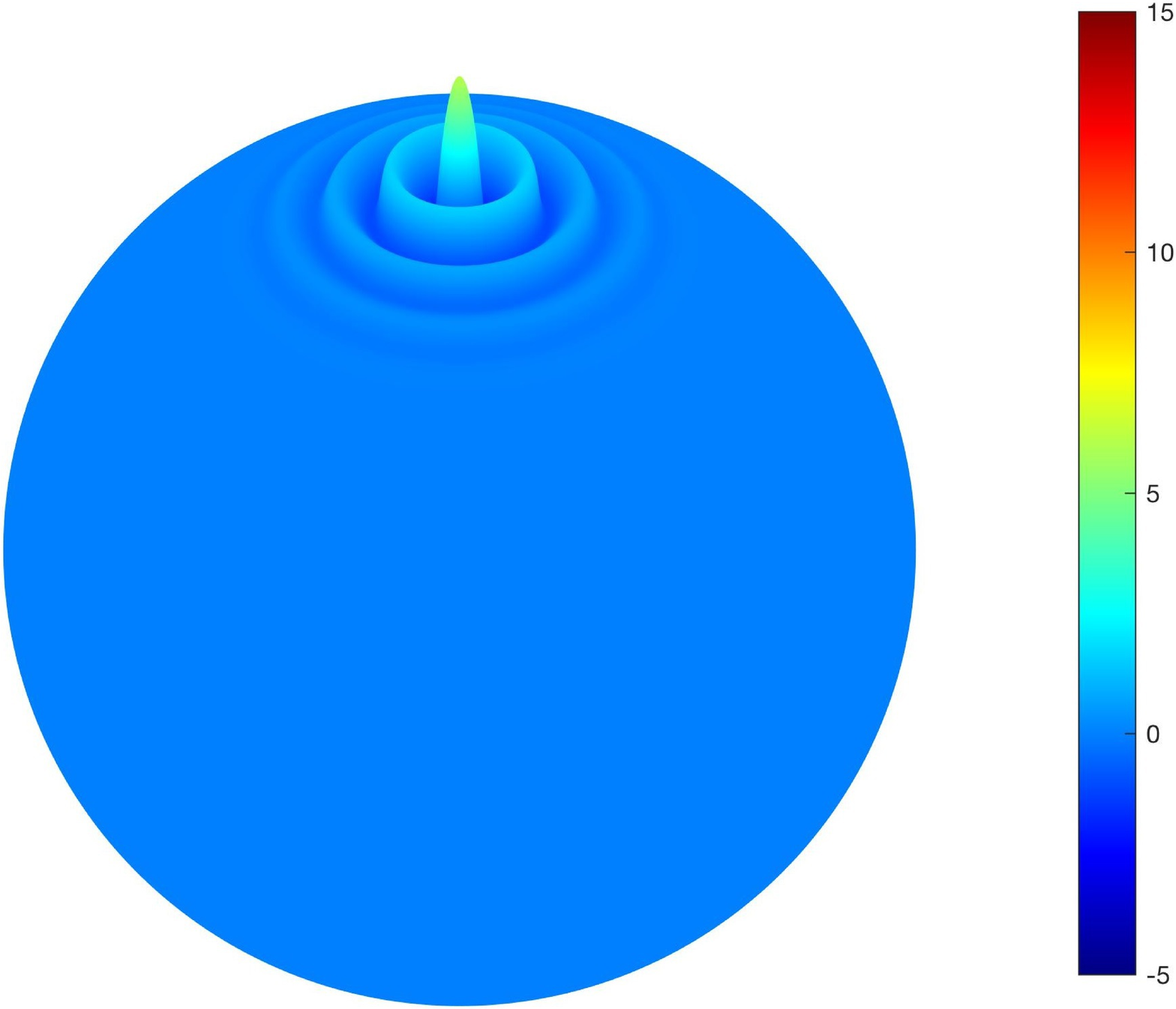}\\
  \subcaption{Framelet $\frb[6,\PT{y}]{2}$}\label{fig:frb.S2}
  \end{minipage}
\end{minipage}
\begin{minipage}{0.8\textwidth}
\vspace{1mm}
\caption{Framelets on $\sph{2}$, scale $j=6$ and $\PT{y}=(0,0,1)$}\label{fig:fr.S2}
\end{minipage}
\end{minipage}
\end{figure}

\subsection{Numerical examples}\label{sec:numer}
In this subsection, we show three numerical examples on $\mfd=\sph{2}$ of the {\fmt} algorithms using the framelet system $\frsys(\Psi,\QQ)=\frsys(\{\scala;\scalb^1,\scalb^2\},\QQ)$ as presented in Subsection~\ref{sec:fmtS2}. The three examples  illustrate for {\fmt}s: the approximation for smooth functions, the multiscale decomposition for a topological data set and the computational complexity for CMB data.

Let $\Psi=\{\scala;\scalb^1,\scalb^2\}$ be the framelet generators associated with the filter bank $\filtbk=\{\maska;\maskb[1],\maskb[2]\}$ given in Section~\ref{sec:fmtS2}, and $\QQ=\{\QN[N_j]\}_{ j={\ord[0]}}^J$ a sequence of point sets on the sphere. We can define a sequence of framelet systems $\frsys[j](\Psi,\QQ)$, $j={\ord[0]},\ldots,J$, as \eqref{eq:intro.frsys}, which can be used to process data on the sphere as described in Algorithms~\ref{algo:decomp.multi.level} and~\ref{algo:reconstr.multi.level}.
A data sequence $\fracoev[]$ sampled from a function on  $\QN[N_J]$ at the finest scale $J$ may not be a $(\Lambda_J,N_J)$-sequence as required by our decomposition and reconstruction algorithms. We can preprocess the data by projecting $\fracoev[]$ onto $\Pi_{2^{J}}$ to obtain a $(\Lambda_J,N_{J})$-sequence using the inverse discrete Fourier transform on the manifold, which splits the data sequence into the approximation coefficient sequence  $\fracoev[\ord]$ at the finest scale  $J$ and the projection error sequence $\projerr=  \fracoev[]-\fracoev[J]$. More precisely, the data sequence $\fracoev[]$ is projected onto $\Pi_{2^{J}}$ by $\fracoev[\ord] = (\fft[\ord]^*\fft[\ord])^{-1}\fft[\ord]^*\fracoev[]$ using the spherical harmonic transform $\fft[\ord]$ and the adjoint spherical harmonic transform $\fft[\ord]^*$. Both of $\fft[\ord]$ and $\fft[\ord]^{*}$ can be implemented fast, in order  $\bigo{}{N_J\sqrt{\log N_J}}$, see e.g. Keiner, Kunis and Potts \cite{KeKuPo2007}. See Example~\ref{ex:1}.

When $\QN[N_j]$ is a (polynomial-exact) quadrature rule of order $2^j$ for all $j=J_0,\ldots, J$, as guaranteed by Theorem~\ref{thm:dec:rec}, we can decompose the $(\Lambda_J,N_J)$-sequence $\fracoev[J]$ and obtain the framelet  coefficient sequences $\frbcoev[\ord-1]{1}$, $\frbcoev[\ord-1]{2}$, $\ldots$, $\frbcoev[{\ord[0]}]{1}$, $\frbcoev[{\ord[0]}]{2}$, $\fracoev[{\ord[0]}]$ by the {\fmt} decomposition in  Algorithm~\ref{algo:decomp.multi.level}. Furthermore,
by using adjoint FFT transforms, we can exactly reconstruct $\fracoev[\ord]$ from the decomposed coefficient sequences $(\frbcoev[\ord-1]{1},\dots, \frbcoev[\ord-1]{r}, \ldots, \frbcoev[{\ord[0]}]{1},\dots,\frbcoev[{\ord[0]}]{r}, \fracoev[{\ord[0]}])$ using the {\fmt} reconstruction in Algorithm~\ref{algo:reconstr.multi.level}.
Once $\fracoev[\ord]$ is obtained, the sequence $\fracoev[]=\fracoev[\ord]+\projerr$ will be constructed with the pre-computed projection error $\projerr$. See Example~\ref{ex:2} for illustration of these steps.

For comparison and illutration of our algorithms in practice, we also show numerical examples of fast framelet algorithms with non-polynomial-exact quadrature rules.
When $\QN[N_j]$ are \emph{not} polynomial-exact quadrature rules, e.g. SP (generalized spiral points) or HL (HEALPix points) in Figure~\ref{fig:QN}, inverse FFT instead of adjoint FFT (see Lines 1 and 4 of Algorithm~\ref{algo:reconstr.multi.level}) is needed to obtain the discrete Fourier coefficients. In this case, errors may appear in each stage of the fast algorithms as the framelets might not be tight, due to the numerical integration errors for polynomials of the point sets. But in practice, one could record such error in each stage.  As this paper is focused on polynomial-exact quadrature rules, we do not get into details on errors for framelets with non-polynomial-exact rules.

We use four types of point sets on $\sph{2}$ as follows.
\begin{enumerate}[(1)]
\item \emph{Gauss-Legendre tensor product rule} (GL) \cite{HeWo2012}. The Gauss-Legendre tensor product rule is a (polynomial-exact but not equal area) quadrature rule $\QN[N]= \{(\wN[k],\pN[k]): k=0,\ldots,N\}$ on the sphere generated by the tensor product of the Gauss-Legendre nodes on the interval $[-1,1]$ and equi-spaced nodes on the longitude with non-equal weights. The GL rule is a polynomial-exact quadrature rule of degree $n$ satisfying $N = n \times (\lfloor (n-1)/2\rfloor+1)$ ($\lfloor (n-1)/2\rfloor+1$ nodes on $[-1,1]$ and $n$ nodes on longitude). Figure~\ref{fig:GL} shows the GL rule with  $n=32$ and  $N=512$.

\item \emph{Symmetric spherical designs} (SD) \cite{Womersley_ssd_URL}. The symmetric spherical design is a (polynomial-exact) quadrature rule $\QN[N]= \{(\wN[k],\pN[k]): k=0,\ldots,N\}$ on the sphere $\sph{2}$ with equal weights $\wN[k]=1/N$. The points are ``equally'' distributed on the sphere. The SD rule is a polynomial-exact quadrature rule of degree $n$ with $N\sim n(n-1)/2$. Figure~\ref{fig:SD} shows the SD rule with $n=32$ and $N=498$.

\item \emph{Generalized spiral points} (SP) \cite{Bauer2000}.
  The rule of generalized spiral points $\QN[N] = \{(1/N,\pN[k]): k = 0,\ldots, N\}$ is given by $\pN[k] = (\cos(1.8\sqrt{N}\theta_k) \sin\theta_k, \sin(1.8\sqrt{N}\theta_k)\sin \theta_k, \cos\theta_k)$ where $\theta_k=\arccos( 1-(2k-1)/N)$ for $k=0,\ldots,N$. We assign equal weights to the SP nodes as they are equal area. SP with equal weights is, however, not a polynomial-exact quadrature rule on the sphere. In the numerical test, to compare with polynomial-exact quadrature rules, we use the SP points with $N=2^{2j+1}$ nodes at scaling level $j$. Figure~\ref{fig:SP} shows the SP points with $N=512$.

\item \emph{HEALPix points\footnote{\url{http://healpix.sourceforge.net}}} (HL) \cite{Gorski_etal2005}. HL is a hierarchical equal area isolatitude point configuration on the sphere.
At each resolution $k$ where $k$ is a positive integer, the number of HL points $N(k)=12\times 2^{2k}$, and the HL partition of the resolution $k$ is nested in that of the resolution $k+1$.
As SP, we assign equal weights to the HL points as nodes of SP are equally distributed. HL with equal weights is not a polynomial-exact quadrature rule on the sphere either. For $j\ge0$, let $k_{j}$ be the smallest positive integer such that $2^{2j+1}\le12\times2^{2k_{j}}$. In the numerical test, to compare with polynomial-exact quadrature rules, we use the HL points of resolution $2^{k_{j}}$ with $N_{j}=12\times2^{2k_{j}}$ nodes at the scaling level $j$ for $j\ge0$. Figure~\ref{fig:HL} shows the HL points of resolution $2^4=16$ on $\sph{2}$ with $N(4)=768$.
\end{enumerate}
%

\begin{figure}[htb]
\begin{minipage}{\textwidth}
	\centering
\begin{minipage}{\textwidth}
  \centering
  \begin{minipage}{0.24\textwidth}
  \centering
  \includegraphics[trim = 0mm 0mm 0mm 0mm, width=0.85\textwidth]{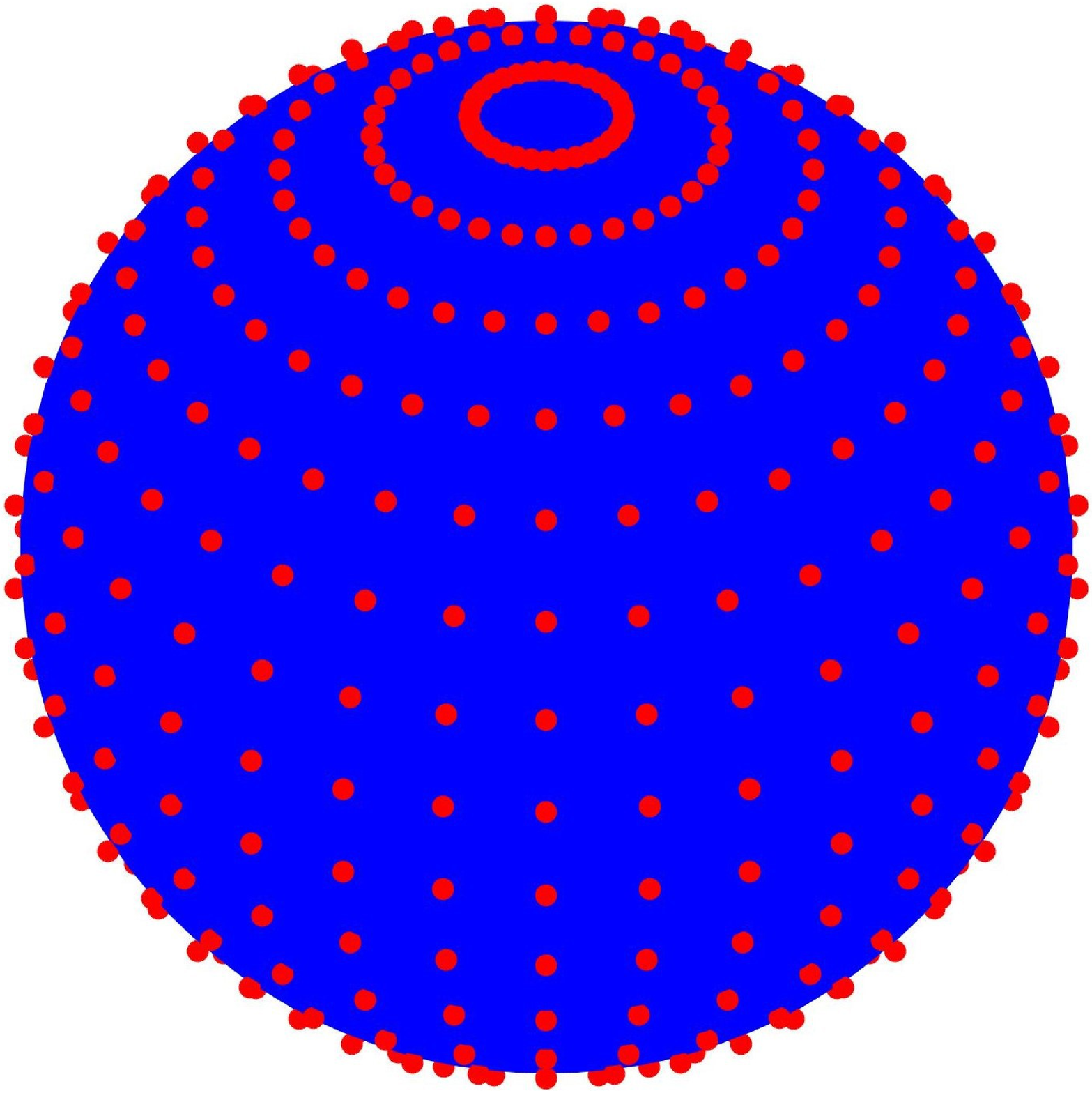}\\[1mm]
  \subcaption{GL, $N=512$}\label{fig:GL}
  \end{minipage}
  \begin{minipage}{0.24\textwidth}
  \centering
  \includegraphics[trim = 0mm 0mm 0mm 0mm, width=0.85\textwidth]{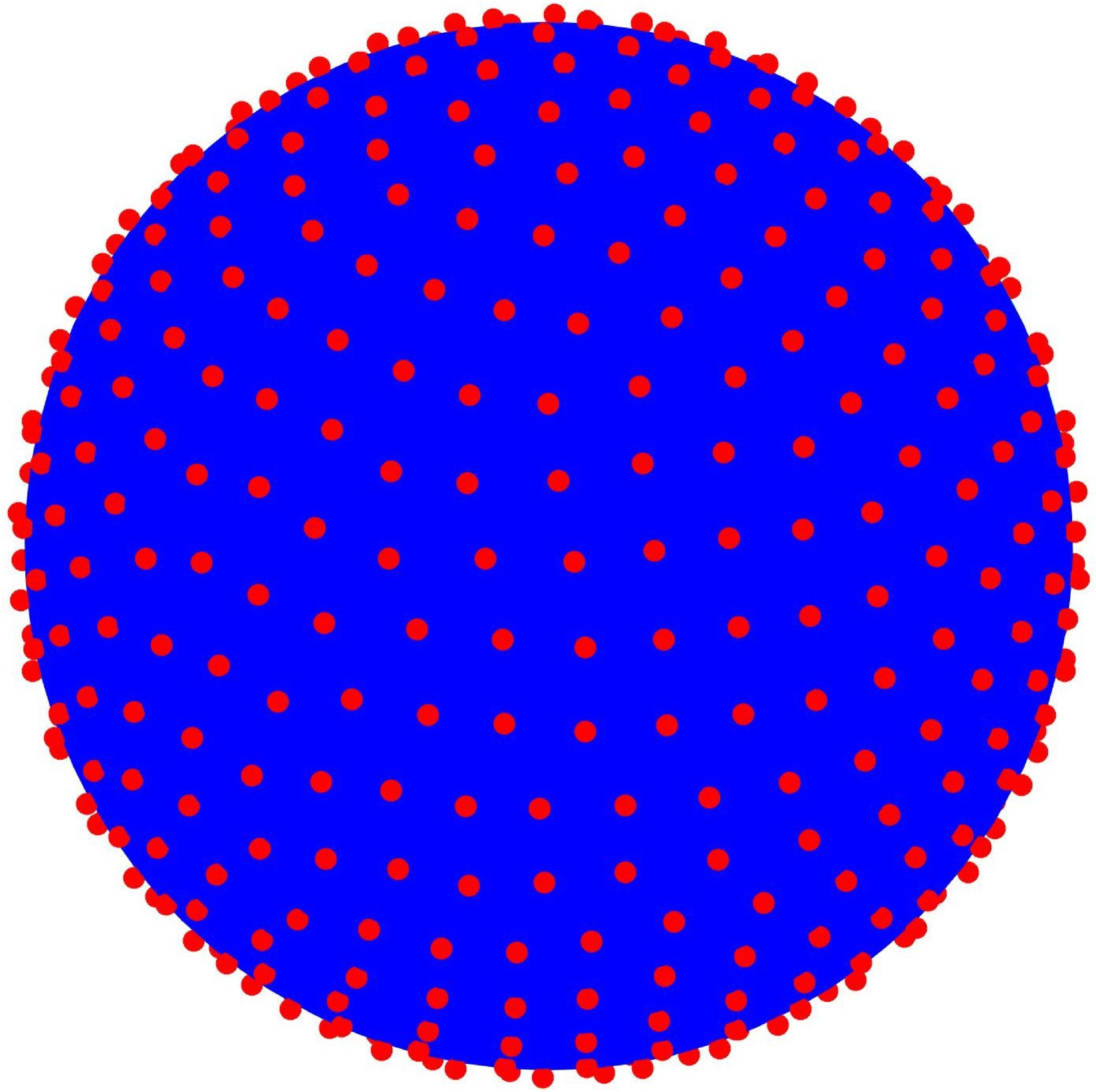}\\[1mm]
  \subcaption{SD, $N=498$}\label{fig:SD}
  \end{minipage}
  \begin{minipage}{0.24\textwidth}
  \centering
  \includegraphics[trim = 0mm 0mm 0mm 0mm, width=0.85\textwidth]{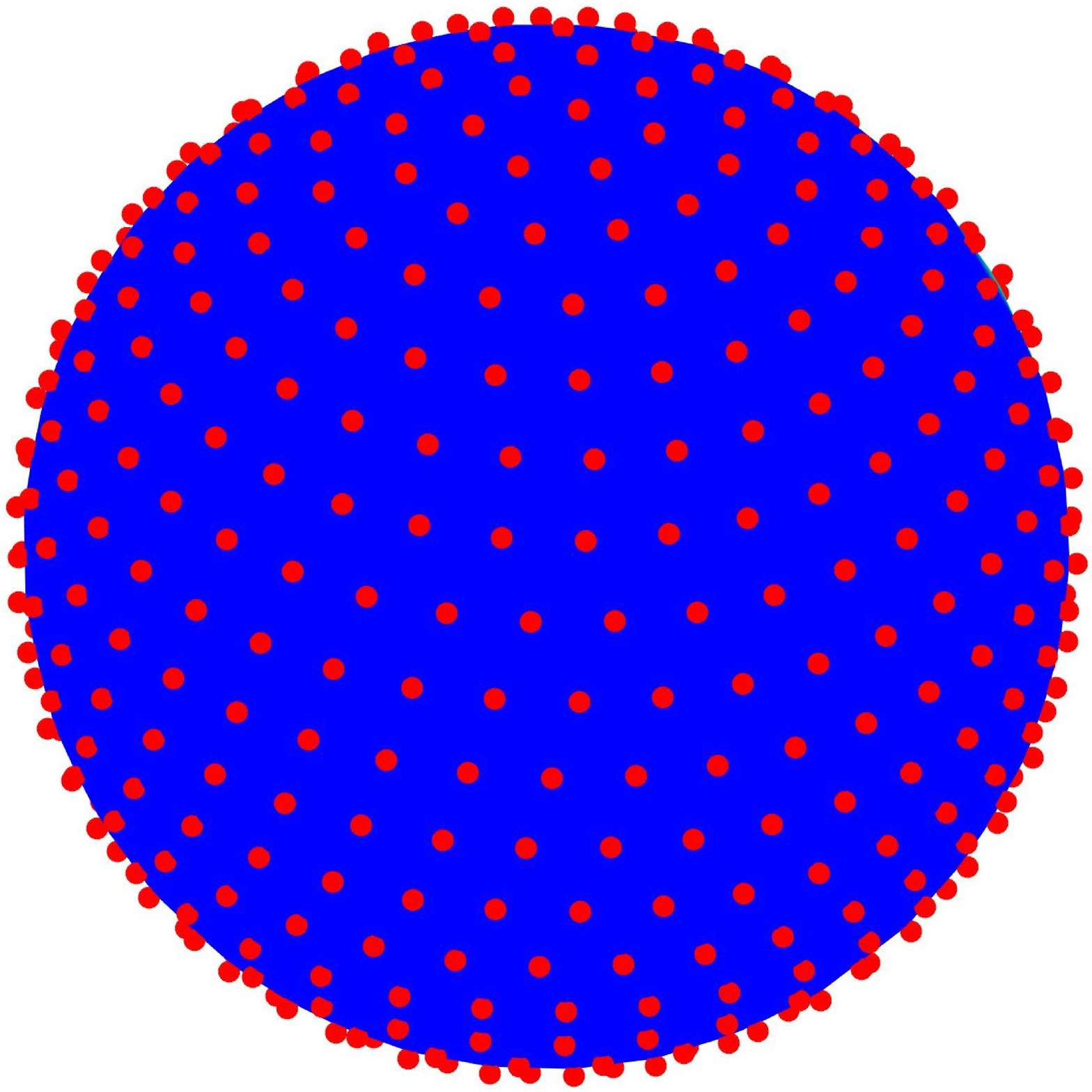}\\[1mm]
  \subcaption{SP, $N = 512$}\label{fig:SP}
  \end{minipage}
  \begin{minipage}{0.24\textwidth}
  \centering
  \includegraphics[trim = 0mm 0mm 0mm 0mm, width=0.85\textwidth]{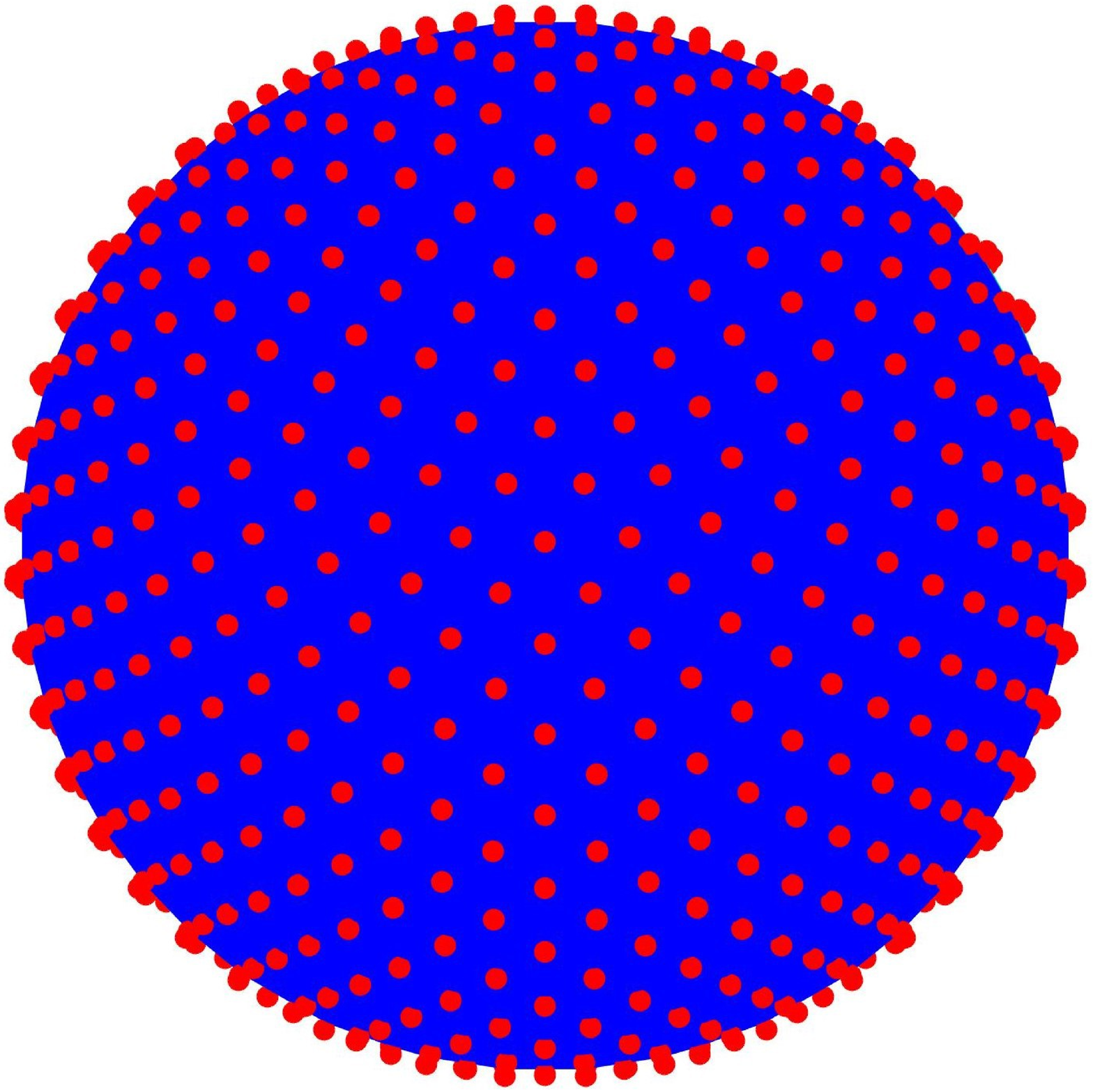}\\[1mm]
  \subcaption{HL, $N=768$}\label{fig:HL}
  \end{minipage}
\end{minipage}
\begin{minipage}{0.8\textwidth}
\vspace{1mm}
\caption{Point sets on the sphere for Gauss-Legendre  rule (GL), symmetric spherical designs (SD), generalized spiral points (SP), and HEALPix (HL).}
\label{fig:QN}
\end{minipage}
\end{minipage}
\end{figure}


\begin{example}[Approximation of smooth functions]
\label{ex:1}
{\rm
We illustrate the approximation ability of $\fra$ in the framelet system $\frsys(\Psi)$ on $\sph{2}$ under different types of point sets for the following test functions of the combinations of normalized Wendland functions \cite{ChSlWo2014}.

Let $(t)_{+}:=\max\{t,0\}$ for $t\in\mathbb{R}$. The original Wendland functions are
\begin{equation*}
  \fWend{n}(t) := \begin{cases}
  (1-t)_{+}^{2}, & n = 0,\\[1mm]
  (1-t)_{+}^{4}(4t + 1), & n = 1,\\[1mm]
  \displaystyle (1-t)_{+}^{6}(35t^2 + 18t + 3)/3, & n = 2,\\[1mm]
  (1-t)_{+}^{8}(32t^3 + 25t^2 + 8t + 1), & n = 3,\\[1mm]
  \displaystyle (1-t)_{+}^{10}(429t^4 + 450t^3 + 210t^2 + 50t + 5)/5, & n = 4.
  \end{cases}
\end{equation*}
The normalized (equal area) Wendland functions are
\begin{equation*}
    \fnWend{n}(t) := \fWend{n}\Bigl(\frac{t}{\tau_{n}}\Bigr),\quad \tau_{n} := \frac{(3n+3)\Gamma(n+\frac{1}{2})}{2\:\Gamma(n+1)},\quad n\geq0.
\end{equation*}
The Wendland functions scaled this way have the property of converging pointwise to a Gaussian as $n\to \infty$, see Chernih et al. \cite{ChSlWo2014}.
%
Let $\PT{z}_{1}:=(1,0,0)$, $\PT{z}_{2}:=(-1,0,0)$, $\PT{z}_{3}:=(0,1,0)$, $\PT{z}_{4}:=(0,-1,0)$, $\PT{z}_{5}:=(0,0,1)$ and $\PT{z}_{6}:=(0,0,-1)$ be six points on $\sph{2}$ and define \cite{LeSlWe2010}
\begin{equation}\label{eq:Phi}
 f_{n}(\PT{x})
  := \sum_{i=1}^{6}\fnWend{n}(|\PT{z}_{i} - \PT{x}|), \quad n\geq0
\end{equation}
so that $\PT{z}_{i}$ are six centers of $f_{n}$,
where $|\cdot|$ is the Euclidean distance.
Le Gia, Sloan and Wendland \cite{LeSlWe2010} proved that $f_{n}\in \sobH[2]{n+\frac{3}{2}}$, where $\sobH[2]{\sigma}:=\{f\in\Lpm[\sph{2}]{2}\setsep \sum_{\ell=0}^\infty\sum_{|m|\le\ell} (1+\ell)^{2\sigma}|\Fcoem[\ell,m]{f}|^2\}<\infty\}$ is the Sobolev space with smooth parameter $\sigma>1$. As the function $f_{n}$ has known smoothness, we can see from the approximation errors the dependence of tight framelets with different points sets on the smoothness of $f_{n}$.

\begin{figure}[htb]
\begin{minipage}{\textwidth}
\centering
\begin{minipage}{\textwidth}
  \centering
  \begin{minipage}{0.24\textwidth}
  \centering
  \includegraphics[trim = 0mm 0mm 0mm 1mm, width=0.9\textwidth]{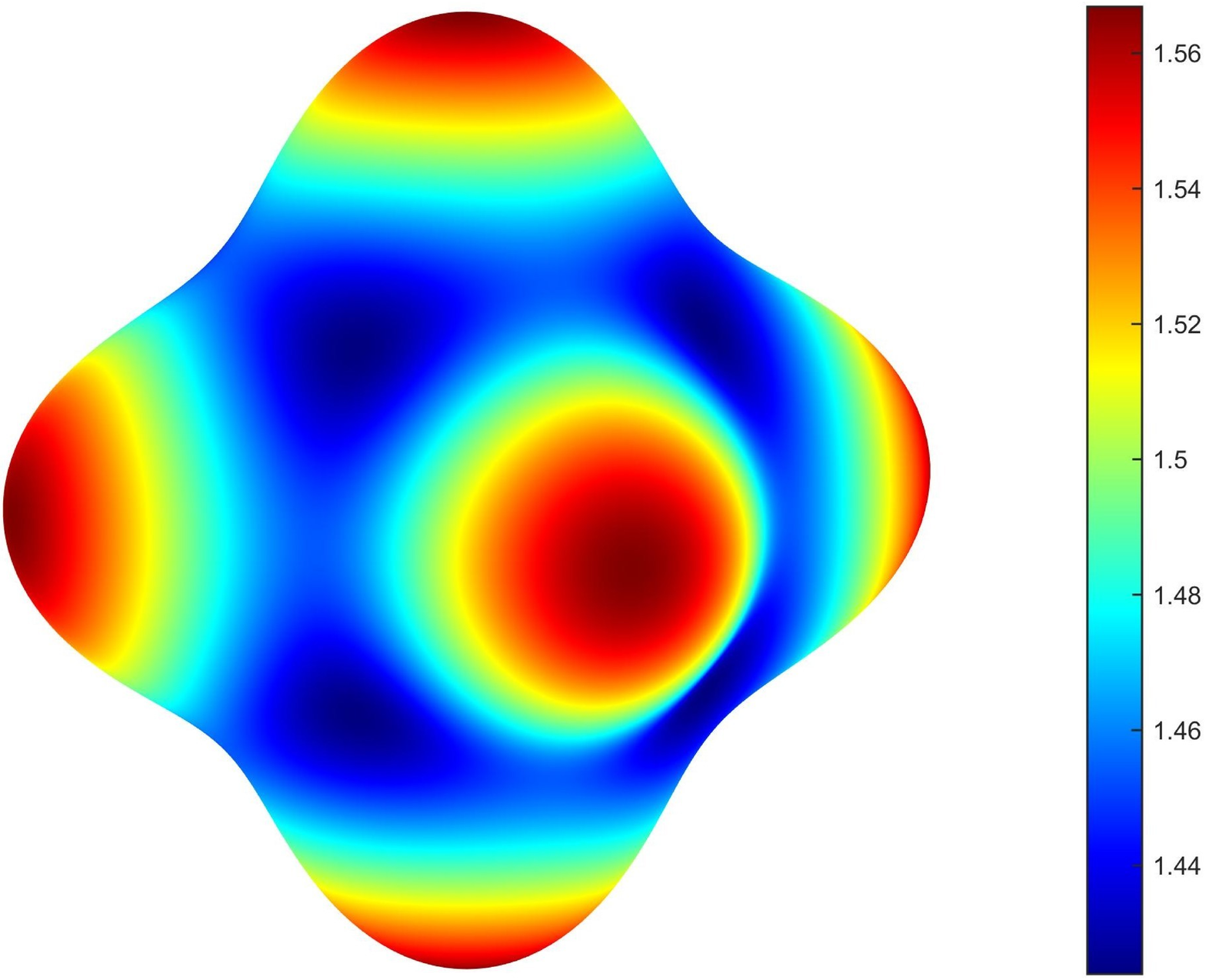}\\[1mm]
    \includegraphics[trim = 0mm 0mm 0mm 1mm, width=1.05\textwidth]{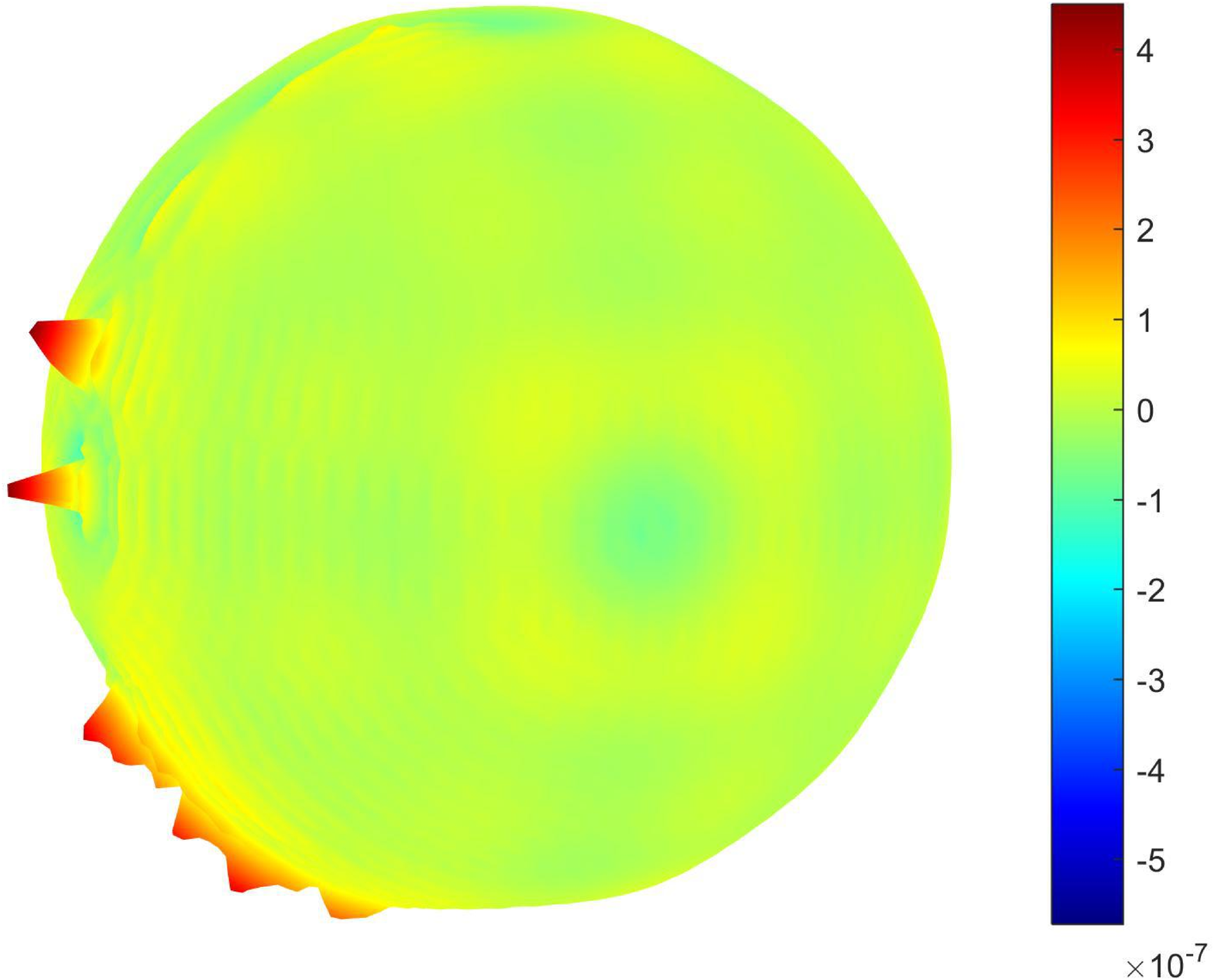}\\[1mm]
      \includegraphics[trim = 0mm 0mm 0mm 1mm, width=1.05\textwidth]{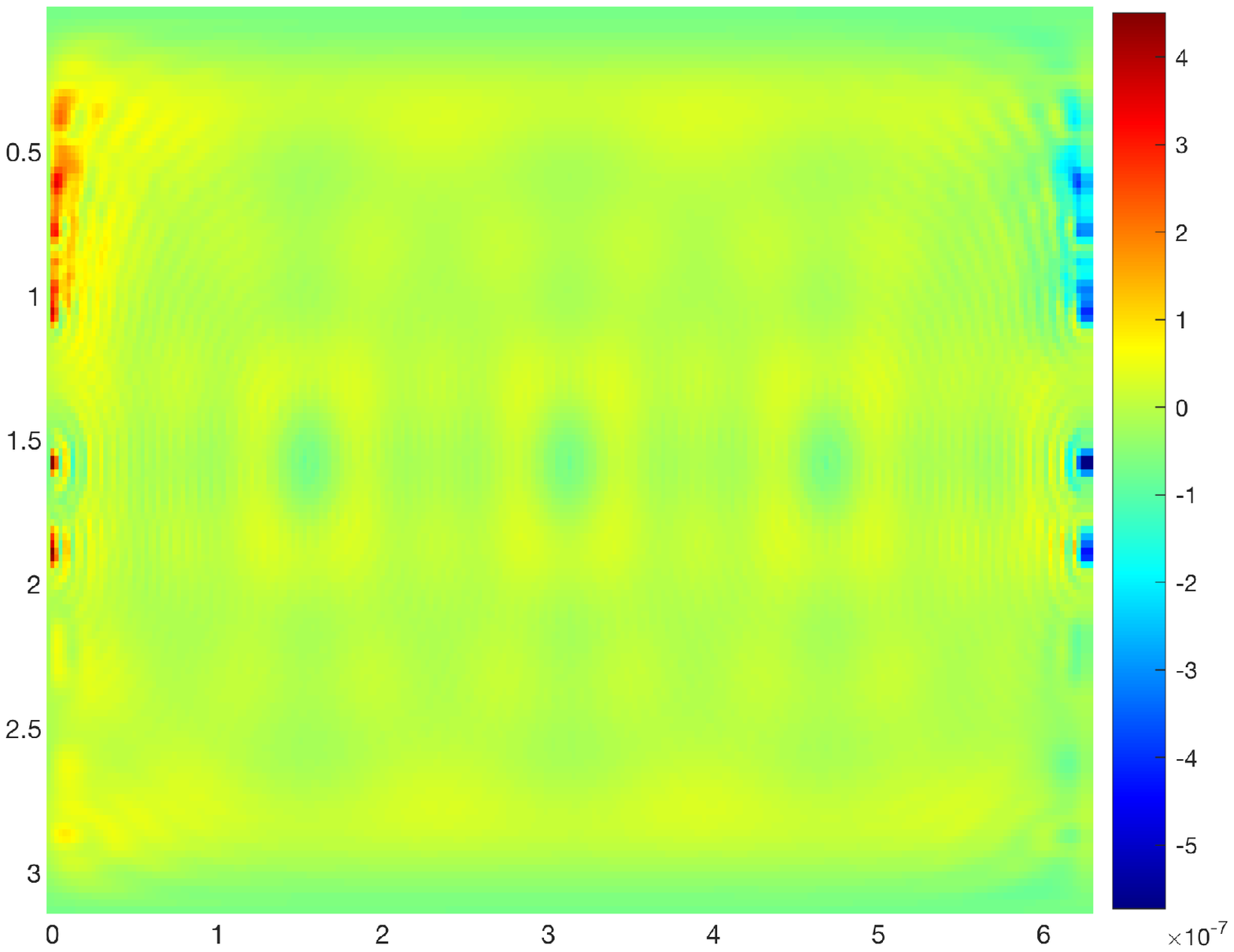}\\[1mm]
  \subcaption{GL ($N_J=32,460$)}\label{fig:RBF.k2.xa}
  \end{minipage}
  \begin{minipage}{0.24\textwidth}
  \centering
  \includegraphics[trim = 0mm 0mm 0mm 1mm, width=0.9\textwidth]{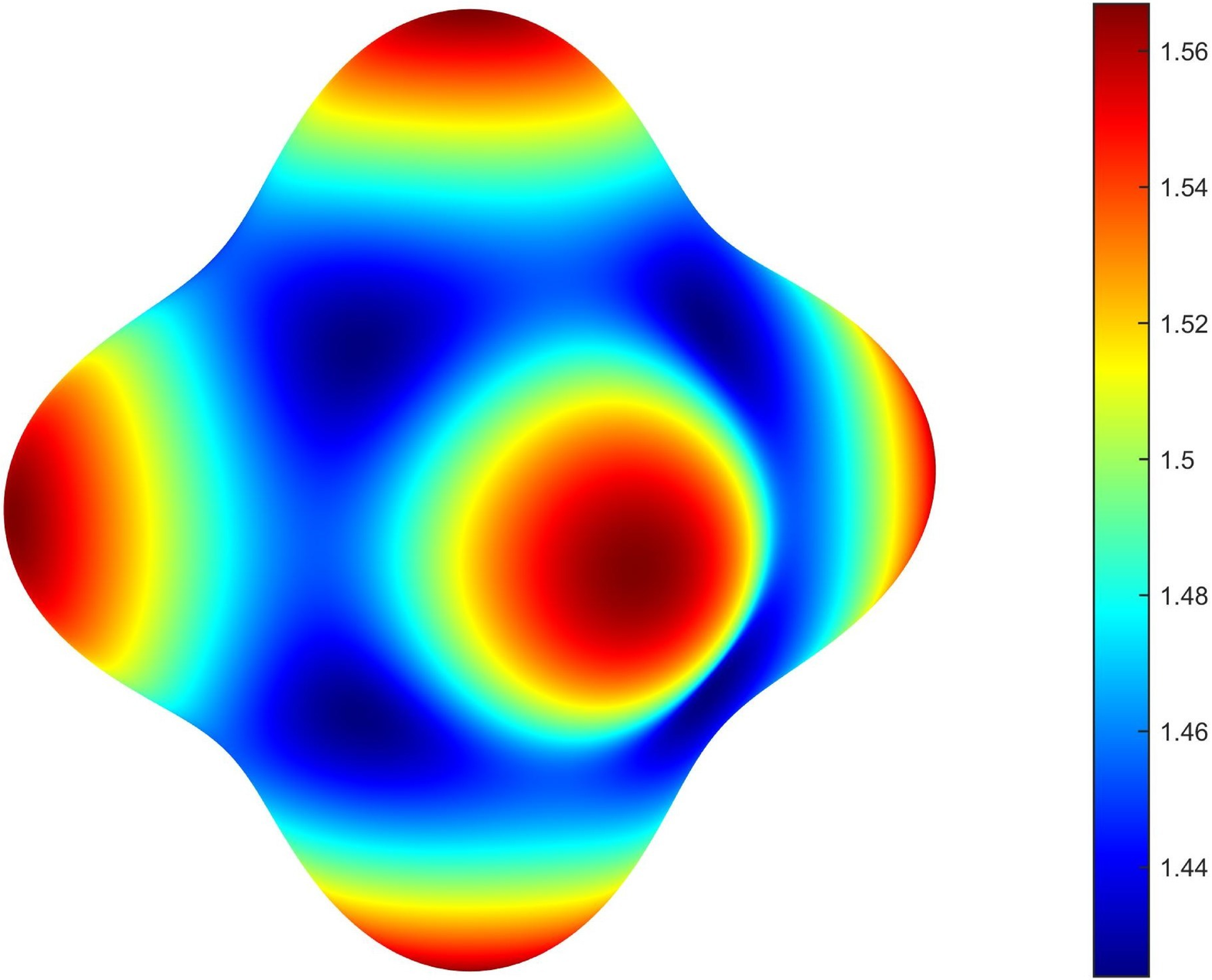}\\[1mm]
    \includegraphics[trim = 0mm 0mm 0mm 1mm, width=1.05\textwidth]{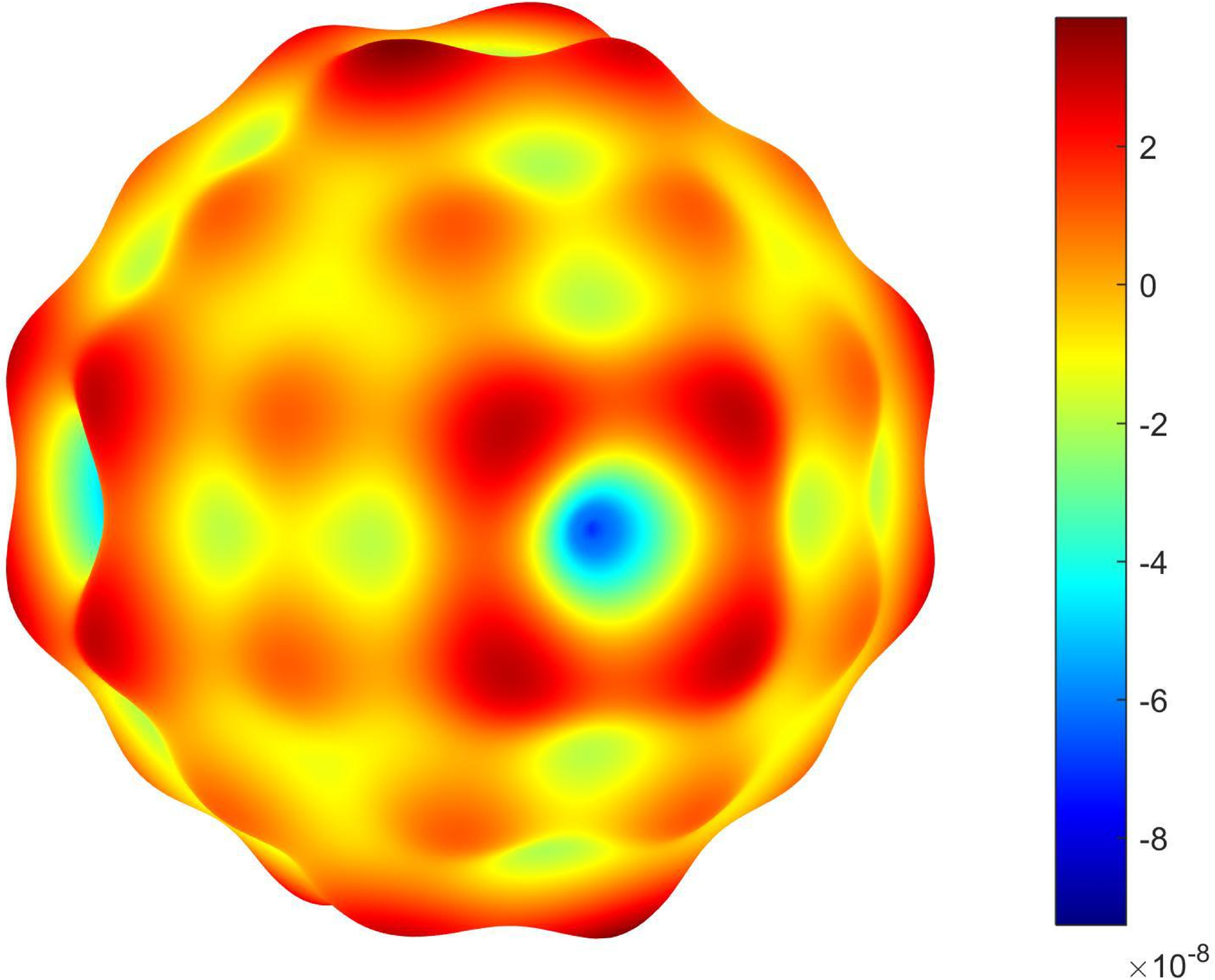}\\[1mm]
    \includegraphics[trim = 0mm 0mm 0mm 1mm, width=1.05\textwidth]{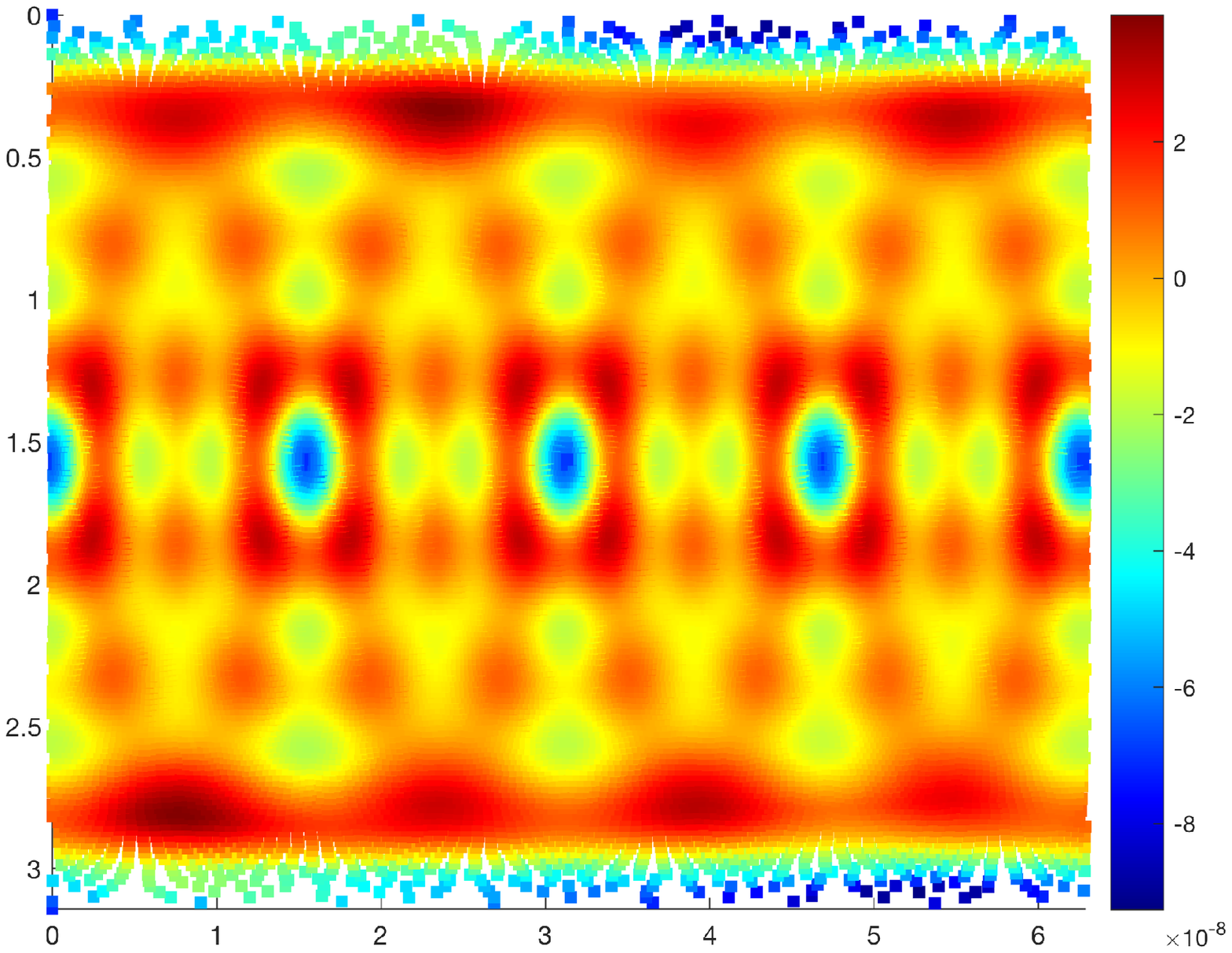}\\[1mm]
  \subcaption{SD ($N_J=32,462$)}\label{fig:RBF.k2.xb}
  \end{minipage}
    \begin{minipage}{0.24\textwidth}
    \centering
  \includegraphics[trim = 0mm 0mm 0mm 1mm, width=0.9\textwidth]{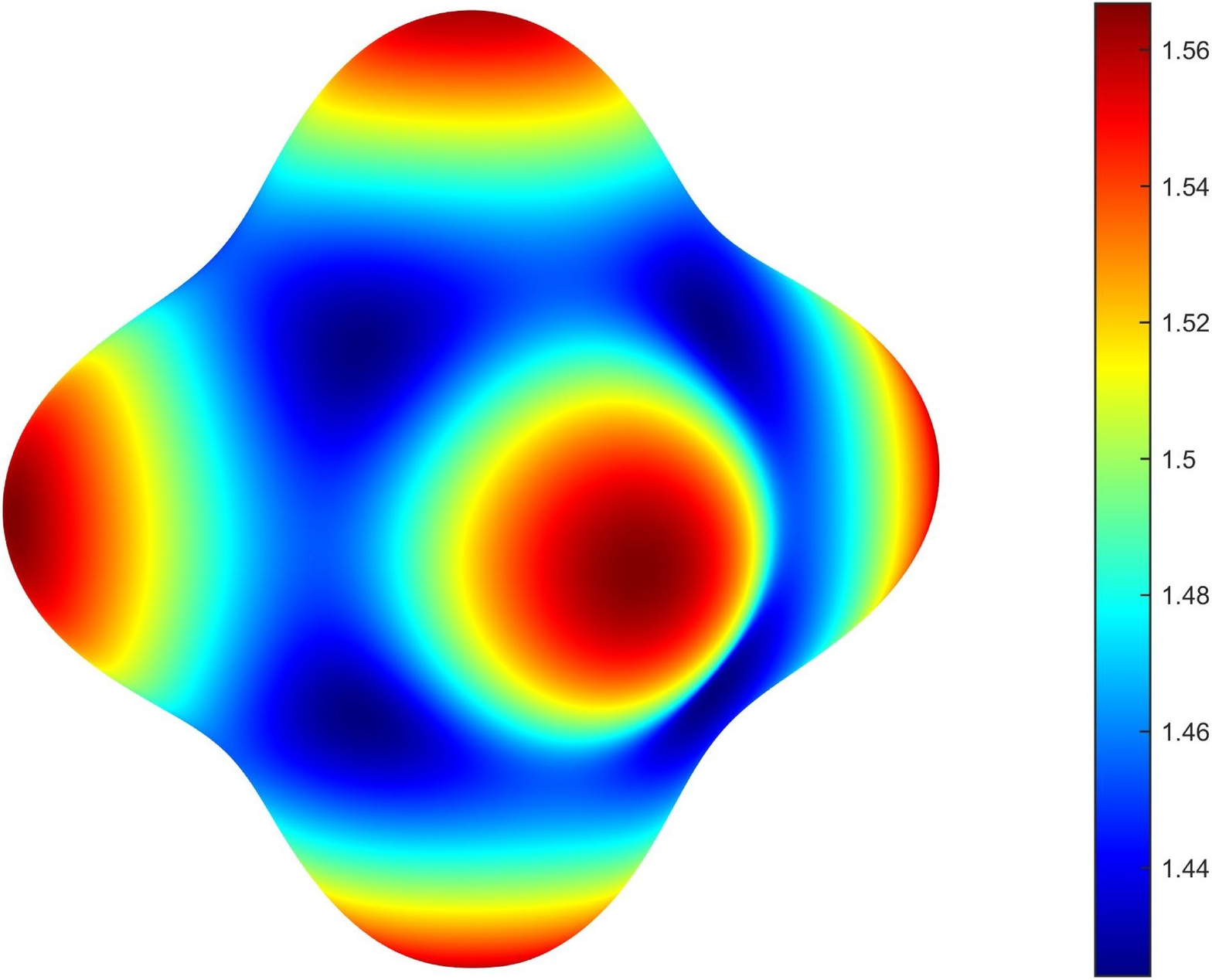}\\[1mm]
    \includegraphics[trim = 0mm 0mm 0mm 1mm, width=1.05\textwidth]{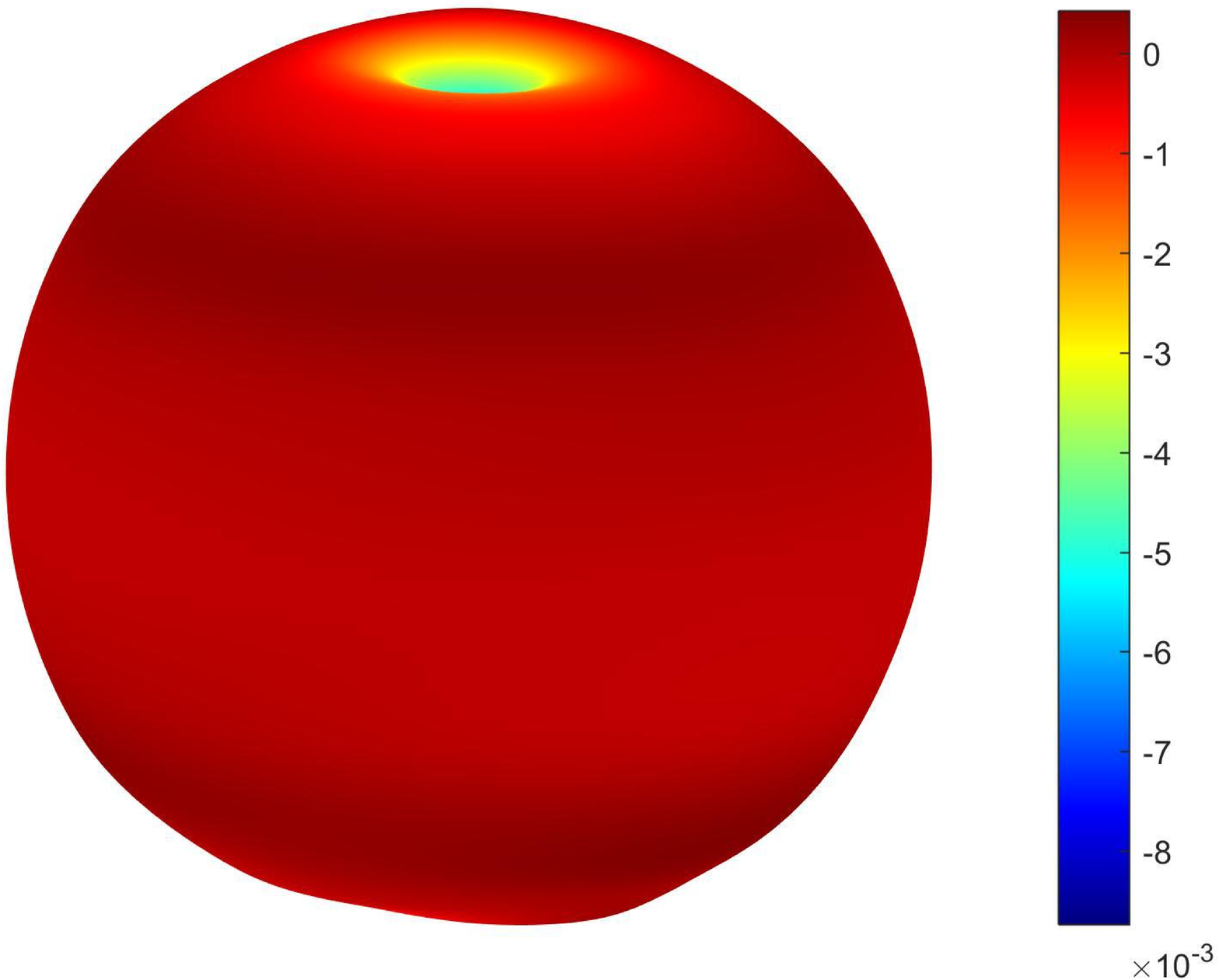}\\[1mm]
    \includegraphics[trim = 0mm 0mm 0mm 1mm, width=1.05\textwidth]{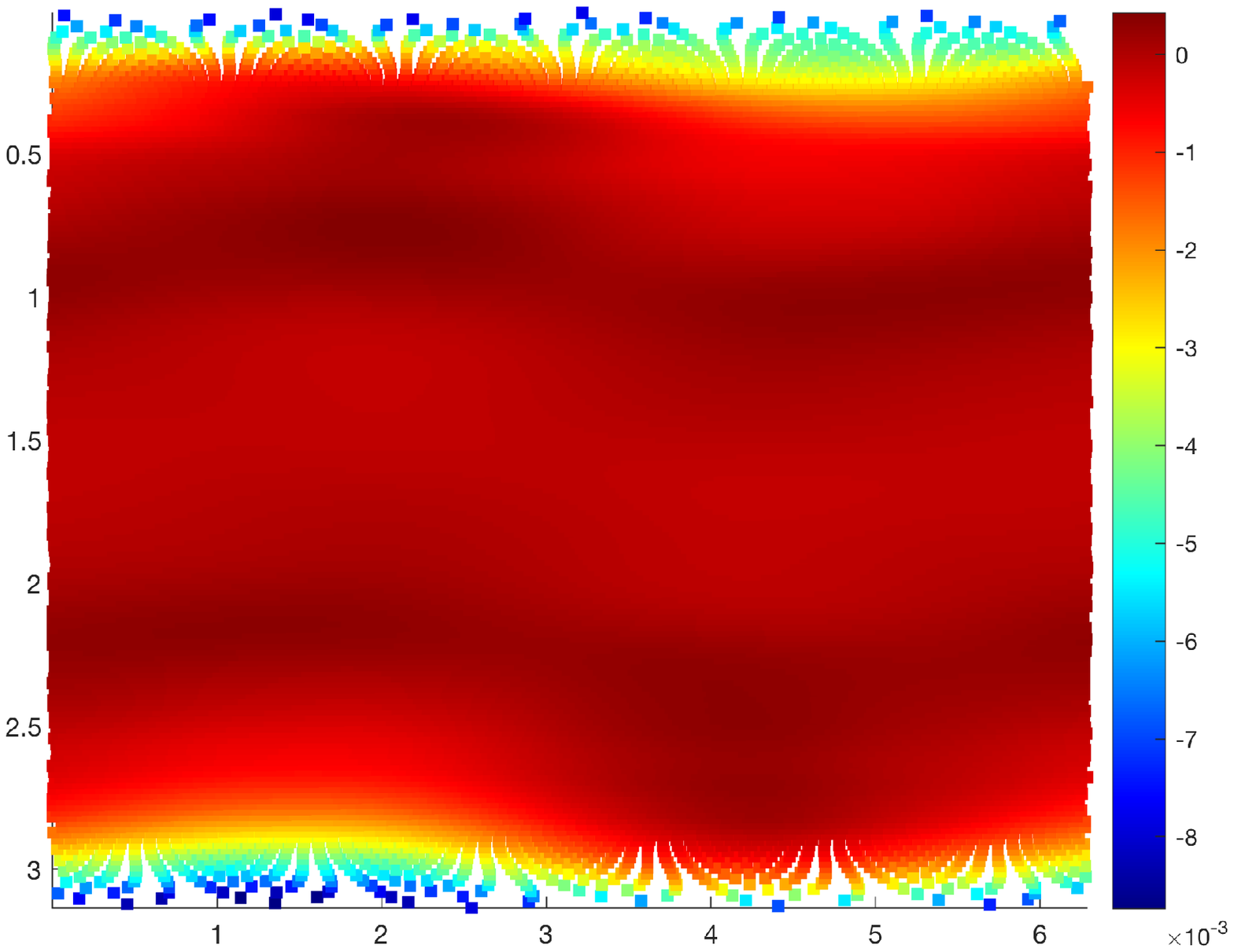}\\[1mm]
  \subcaption{SP ($N_J=32,768$)}\label{fig:RBF.k2.xc}
  \end{minipage}
    \begin{minipage}{0.24\textwidth}
    \centering
  \includegraphics[trim = 0mm 0mm 0mm 1mm, width=0.9\textwidth]{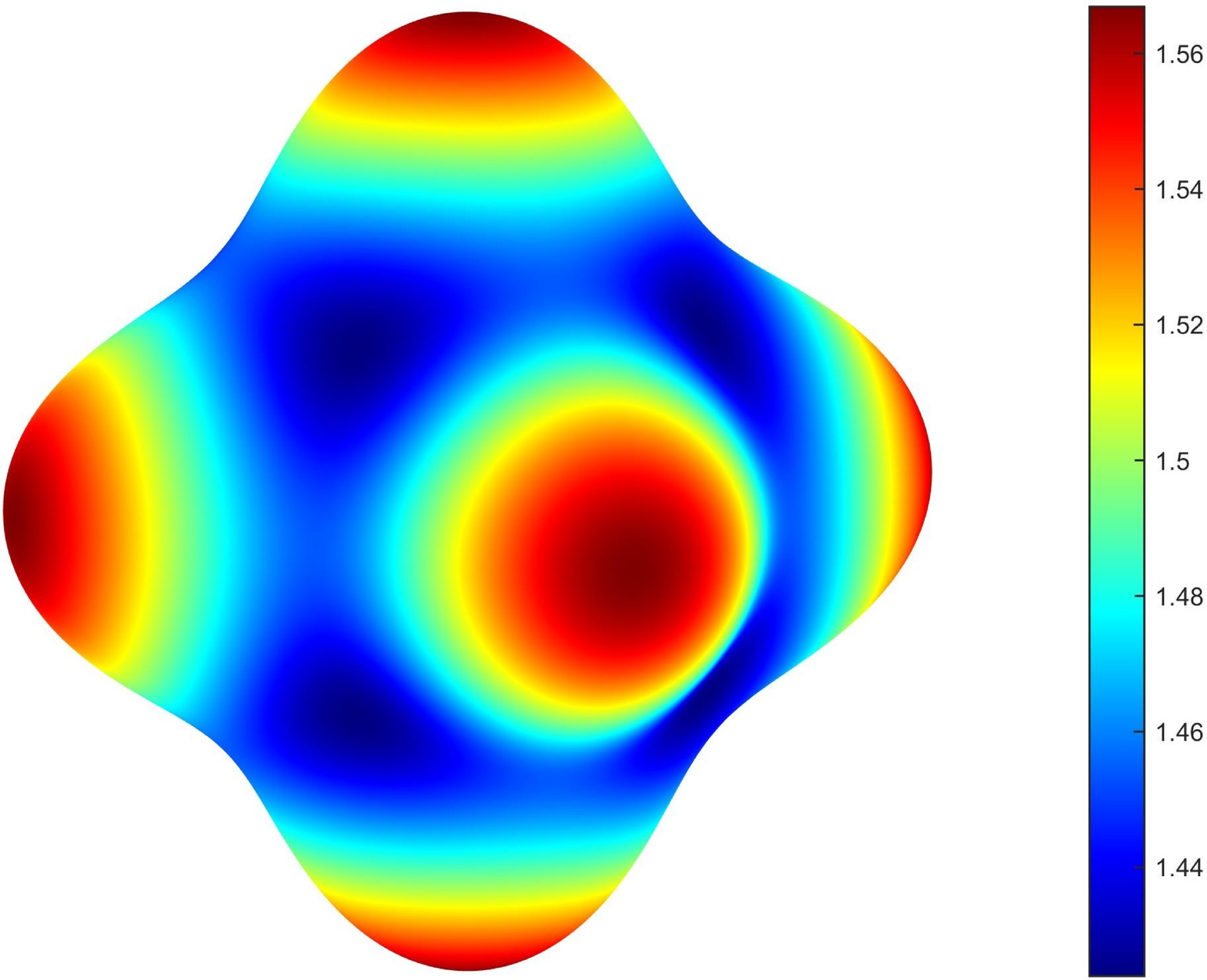}\\[1mm]
    \includegraphics[trim = 0mm 0mm 0mm 1mm, width=1.05\textwidth]{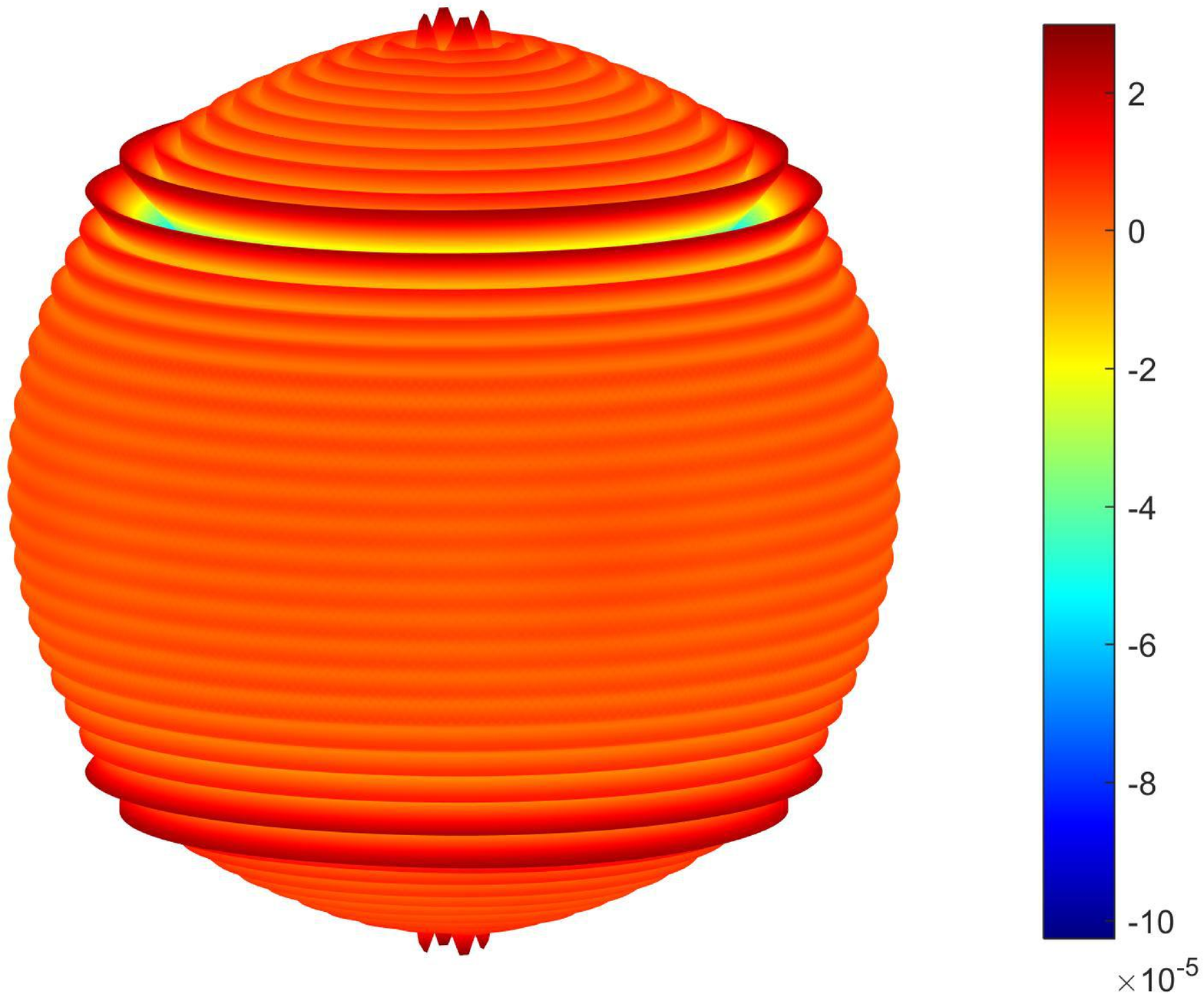}\\[1mm]
    \includegraphics[trim = 0mm 0mm 0mm 1mm, width=1.05\textwidth]{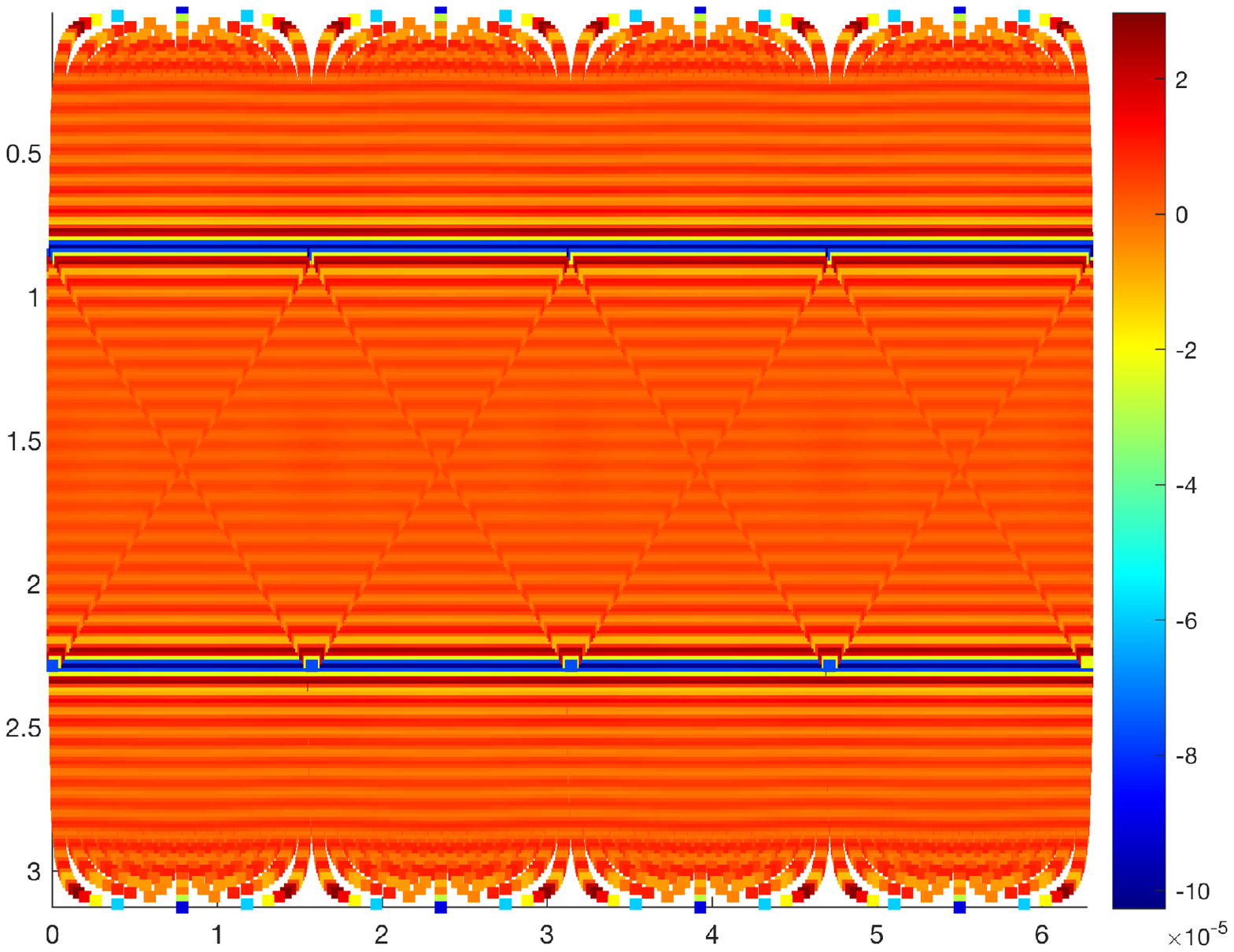}\\[1mm]
  \subcaption{HL ($N_J=49,152$)}\label{fig:RBF.k2.xd}
  \end{minipage}
  \end{minipage}
  \begin{minipage}{0.8\textwidth}
  	\vspace{1mm}
\caption{Projection term $\fracoev[\ord]$ (top row), error term $\frbcoev[\ord]{}$ (middle row), and the equirectangular projection of the error for RBF $f_{2}$ on $\sph{2}$ using different  quadrature rules, $\ord=7$.
}
\label{fig:RBF}
\end{minipage}
\end{minipage}
\end{figure}

Given a point set $\QN[N_J]$, we use $f_n$ on $\QN[N_J]$ as the data sequence $\fracoev[]:=(\fracoev[k])_{k=1}^{N_{\ord}}$, i.e. $\fracoev[k] = f_n(\pN)$, and compute the projection $\fracoev[\ord]$ and projection error $\projerr$ where $\fracoev[]=\fracoev[\ord]+\projerr$.

Figures~\ref{fig:RBF.k2.xa} -- \ref{fig:RBF.k2.xd} show the 3D view pictures of projection $\fracoev[\ord]$ (top row), error $\frbcoev[\ord]{}$ (middle row), and the equirectangular projection of the error (bottom row), using the four types of quadrature rules for $f_2$. We observe that the distributions of errors are partly due to the collective effect of the NFSFT algorithms and the points sets used in {\fmt}. We can observe that the errors by {\fmt} with different quadrature rules show distinct distribution patterns.

Table~\ref{tab:fmt.err} shows the relative $L_2$-error $\frac{\|\fracoev[]-\fracoev[\ord]\|}{\|\fracoev[]\|}$ (with Frobenius-norm) of the projections using the four types of point sets (1)--(4) in Figure~\ref{fig:QN}. The quadrature rules $\QN[N_J]$ with $J=7$ for GL ($N_J = 32,640$) and SD ($N_J=32,642$) are polynomial-exact quadrature rules of degree  $n = 255$.

We observe that SD incurs smaller approximation errors than GL.  The point sets $\QN[N_J]$ with $J=7$  for  SP ($N_J = 32,768$) and HL ($N_J=49,152$) which are \emph{not} polynomial-exact quadrature rules give worse approximation results than GL and SD. This demonstrates that using the polynomial-exact quadrature rules for framelets is more effective than using the non-polynomial-exact quadrature rules. Also, with the increase of the smoothness of the function $f_{n}$, the approximation error of the tight framelets with polynomial-exact quadrature rules (GL and SD) becomes smaller. 

\begin{remark}{
\rm
The fact that the lack of polynomial-exactness of the quadrature for framelets leads to noticeably worse approximation errors was also observed in \cite{LeMh2006}. 
The dependence of $L_{2}$ approximation errors of the tight framelets on smoothness of function space is consistent with that of the filtered approximation on $\sph{2}$, see \cite{Mh2005,NaPeWa2006-1,SlWo2012,WaLeSlWo2017}.
}
\end{remark}

\begin{table}[htb]
\centering
\begin{minipage}{0.9\textwidth}
\centering
\begin{tabular}{l*{10}{c}c}
\toprule
$\QN[N_J]$  &   $f_{0}$    &   $f_{1}$   &   $f_{2}$    &  $f_{3}$     &   $f_{4}$      \\
\midrule
 GL  (32,640)  &  3.9572e-05  & 1.0630e-07 & 1.9294e-08 & 1.6813e-08 &   1.6681e-08  \\
 SD (32,642)  &  6.2013e-05  &  9.8473e-08 &  1.0125e-08 &  3.6211e-09 &  2.9568e-09 \\
 SP (32,768) &  5.0854e-04 &  4.8888e-04 & 4.8297e-04 & 4.8112e-04 &  4.8053e-04 \\
 HL (49,152) &  4.2954e-05 &  1.1370e-05 & 1.1449e-05 & 1.1421e-05  & 1.1453e-05 \\
\bottomrule
\end{tabular}
\end{minipage}
\begin{minipage}{0.8\textwidth}
\vspace{3mm}
\caption{Relative $L_{2}$-errors of {\fmt} for  GL ($N_J=32,640$, exact for degree up to $n=255$), SD ($N_J=32,642$, exact for degree up to $n=255$), SP ($N_J = 32,768$), and HL ($N_J = 49,152$). SP and HL are with equal weights.}
\label{tab:fmt.err}
\end{minipage}
\end{table}
}
\end{example}

\begin{example}[Multiple high-pass filters]
\label{ex:SNR}
{\rm
To illustrate the role of using multiple high-pass filters played in a framelet system, we show a denoising experiment for restoring the signal $f_4$ from a noisy signal $f=f_4+g$ using three different filter banks. Here $f_4$ is given in \eqref{eq:Phi} and $g$ is a Gaussian white noise  $N(0,\sigma^2)$ with standard deviation $\sigma$.

We sample $f_4$ on the GL quadrature rules $\QN[N_J]$ with $J=6$ to obtain a signal $\fracoev[6]$ on the sphere and then add the Gaussian noise $g$ with standard deviation  $\sigma:=\sigma_\theta := \theta\max_{\PT{x}\in \QN[N_J]}{f_4(\PT{x})}$, where $\theta$ is a parameter ranging from $0.05$ to $0.20$ to control the noise level $\sigma_{\theta}$, that is, we choose $\sigma_{\theta}$ to be $5$ to $20$ percent of the maximal value of $f_4$.

Let $\chi_{[c_L,c_R];\epsilon_L,\epsilon_R}$ be the function supported on $[c_L-\epsilon_L,c_R+\epsilon_R]$ as defined in \cite[Eq.~3.1]{HaZhZh2016}. We construct three different filter banks $\filtbk_1, \filtbk_2$ and $\filtbk_3$ with $1, 2$ and $3$ high-pass filters: the filter bank $\filtbk_1=\{a;b^1_1\}$ determined by $\FT{a}:=\chi_{[-3/16,3/16]; 1/16,1/16}$ and $\FT{b}_1^1:=\chi_{[3/16,9/16]; 1/16,1/16}$, the filter bank $\filtbk_2=\{a;b^1_2,b^2_2\}$ by $\FT{b}_2^1:=\chi_{[3/16,3/8]; 1/16,1/8}$ and $\FT{b}_2^1:=\chi_{[3/8,9/16]; 1/16,1/16}$ and the filter bank $\filtbk_3=\{a;b^1_3,b^2_3,b^3_3\}$ by $\FT{b}_3^1:=\chi_{[3/16,5/16]; 1/16,1/16}$, $\FT{b}_3^2:=\chi_{[5/16,7/16]; 1/16,1/16}$, and $\FT{b}_3^3:=\chi_{[7/16,9/16]; 1/16,1/16}$. Sharing a low-pass filter $a$, each filter bank $\filtbk_{i}$ corresponds to a framelet system $\frsys(\Psi_k,\QQ)$ on the sphere, similar to $\frsys(\Psi,\QQ)$ in Subsection~\ref{sec:fmtS2}.

Given the noisy data $\fracoev[\theta]=\fracoev[6]+g$ with noise level $\sigma_\theta$ and a filter bank $\filtbk_{i}$, we apply Algorithm~\ref{algo:decomp.multi.level} to $\fracoev[\theta]$ with $J=6$ and $J_0=4$. We use a simple hard thresholding technique to the corresponding output high-pass (filtered) coefficient sequences with threshold value same as $\sigma_\theta$ and then apply Algorithm~\ref{algo:reconstr.multi.level} to the thresholded coefficient sequences and obtain a reconstructed signal $\rfrav[6]$. The performance of a framelet system for denoising is measured by the signal-to-noise ratio (with unit dB), denoted by $\mathrm{SNR}(\fracoev[6],\rfrav[6]):=20\log_{10}\frac{\|\fracoev[6]\|}{\|\rfrav[6]-\fracoev[6]\|}$. The larger $\mathrm{SNR}$, the more effective the framelet system for denoising is.

The results are reported in Table~\ref{tab:more-filters}. We observe that the filter bank $\filtbk_2$ brings more than $1$ dB improvement compared to $\filtbk_1$ by splitting $b_1^1$ to $b_2^1$ and $b_2^2$, and the use of $\filtbk_3$ brings about $0.5$ dB improvement compared to $\filtbk_2$. Note that we do not make any hard thresholding on the low-pass filter coefficient sequences. The results that $\filtbk_3$ outperforms $\filtbk_2$ and $\filtbk_2$ outperforms $\filtbk_1$ illustrate the advantage of using multiple high-pass filters in a framelet system for denoising. Also, using multiple high-pass filters allows more free parameters in the filter bank and more flexibility of the design of high-pass filters. 

\begin{table}[htb]
\centering
\begin{minipage}{0.9\textwidth}
\centering
\begin{tabular}{l*{10}{c}c}
\toprule
$\theta$ &  $\mathrm{SNR}(\fracoev[6],{\fracoev[\theta]})$ & $\filtbk_{1}$    &   $\filtbk_{2}$   &   $\filtbk_{3}$      \\
\midrule
 0.05  & 17.12 &  19.58  & 20.82 & 21.25   \\
 0.10  & 11.09 &  13.66  & 14.92 & 15.37   \\
 0.15  & 7.57  &  10.25  & 11.54 & 12.00   \\
 0.20  & 5.07  &  7.78   & 9.09  & 9.56    \\
\bottomrule
\end{tabular}
\end{minipage}
\begin{minipage}{0.8\textwidth}
\vspace{3mm}
\caption{Denoising performance in terms of SNR (dB) by the filter banks
$\filtbk_{1}=\{a;b^1_1\}$, $\filtbk_{2}=\{a; b^1_2,b^2_2\}$, $\filtbk_{3}=\{a;b^1_3,b^2_3,b^3_3\}$. The first column $\theta$ ranges from $0.05$ to $0.20$. The second column is the SNR of the original signal $\fracoev[6]$ and the noisy signal $\fracoev[\theta]$. The third, fourth and fifth columns are the SNR of the original signal $\fracoev[6]$ and the reconstructed signal $\rfrav[6]$ for the filter banks $\filtbk_1, \filtbk_2$ and $\filtbk_3$.
}
\label{tab:more-filters}
\end{minipage}
\end{table}

}
\end{example}

\begin{example}[Multiscale analysis]
\label{ex:2}
{\rm
We use the data set ETOPO1 of Earth surface (see Figure~\ref{fig:data}) to illustrate the multiscale decomposition of the {\fmt} algorithm using the GL rules. The data set ETOPO1 for the planar earth is based on $1$ arc-minute global relief model of Earth's surface that integrates land topography and ocean bathymetry by National Centers for Environmental Information (NCEI), see \cite{AmEa2009}.

We sample the data set ETOPO1 at GL points $\QN[N_J]$ to obtain a data sequence $\fracoev[]$ (see Figure~\ref{fig:tpg}) at the scaling level $J=9$ with $N_J = 786,432$ nodes. At level $8$, the GL rule $\QN[N_8]$ has $N_8 = 196,608$ nodes. At level $7$, the GL rule $\QN[N_7]$ has $N_7 = 49,152$ nodes. With the sequence $\QN[] = \{\QN[N_j]: j=7,8,9\}$ of quadrature rules, we can define the sequence of framelet systems $\{\frsys[j](\{\scala;\scalb^1,\scalb^2\})\}_{j=7}^9$ as described in Section~\ref{sec:fmtS2}.

Applying Algorithm~\ref{algo:decomp.multi.level} with the framelet systems $\{\frsys[j](\{\scala;\scalb^1,\scalb^2\})\}_{j=7}^9$,  we  obtain the projection  $\fracoev[9]$ (see Figure~\ref{fig:tpg.fmt}) and the error  $\frbcoev[9]{}$ (see Figure~\ref{fig:tpg.err}) at the finest level $j=9$ satisfying $\fracoev[] = \fracoev[9]+\frbcoev[9]{}$.

At the level $j=8$, the projection  $\fracoev[9]$ is decomposed to the framelet approximation coefficient sequence $\fracoev[8]$ (see Figure~\ref{fig:tpg.v8}) and the framelet detail  coefficient sequences $\frbcoev[8]{1}$ and $\frbcoev[8]{2}$ (see Figures~\ref{fig:tpg.w8.1} and \ref{fig:tpg.w8.2}).

At the level $j= 7$, the approximation $\fracoev[8]$ is further decomposed to
$\fracoev[7]$,  $\frbcoev[7]{1}$, and $\frbcoev[7]{2}$ (see Figures~\ref{fig:tpg.v7} -- \ref{fig:tpg.w7.2}).

The pictures in Figure~\ref{fig:tpg:GL} show that the framelet systems can decompose the input data into a good data approximation and elaborate data details at different resolutions.
The higher-level projection gives the picture with higher resolution and incurs the smaller projection error. The pictures also verify the multiresolution structure of a sequence of tight framelet systems and thus demonstrate the ability of {\fmt} for multiscale data analysis.

\begin{figure}[htb]
\begin{minipage}{\textwidth}
\centering
\begin{minipage}{\textwidth}
\centering
\begin{minipage}{\textwidth}
\centering
  \begin{minipage}{0.31\textwidth}
  \centering
  \includegraphics[trim = 0mm 0mm 0mm 1mm, width=0.9\textwidth]{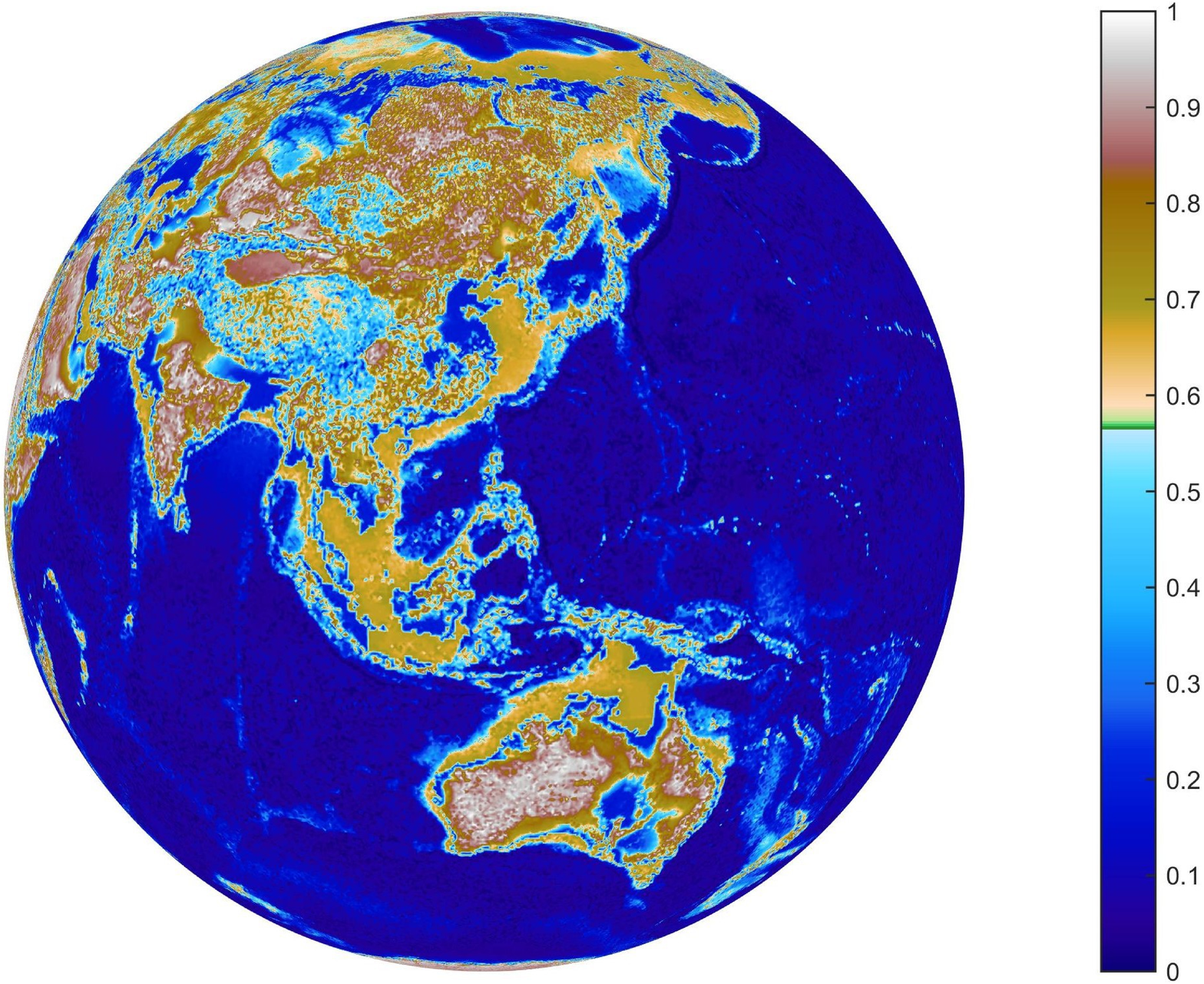}\\
  \subcaption{ETOPO1 data $\fracoev[]$}\label{fig:tpg}
  \end{minipage}
  \begin{minipage}{0.31\textwidth}
  \centering
  \includegraphics[trim = 0mm 0mm 0mm 1mm, width=0.9\textwidth]{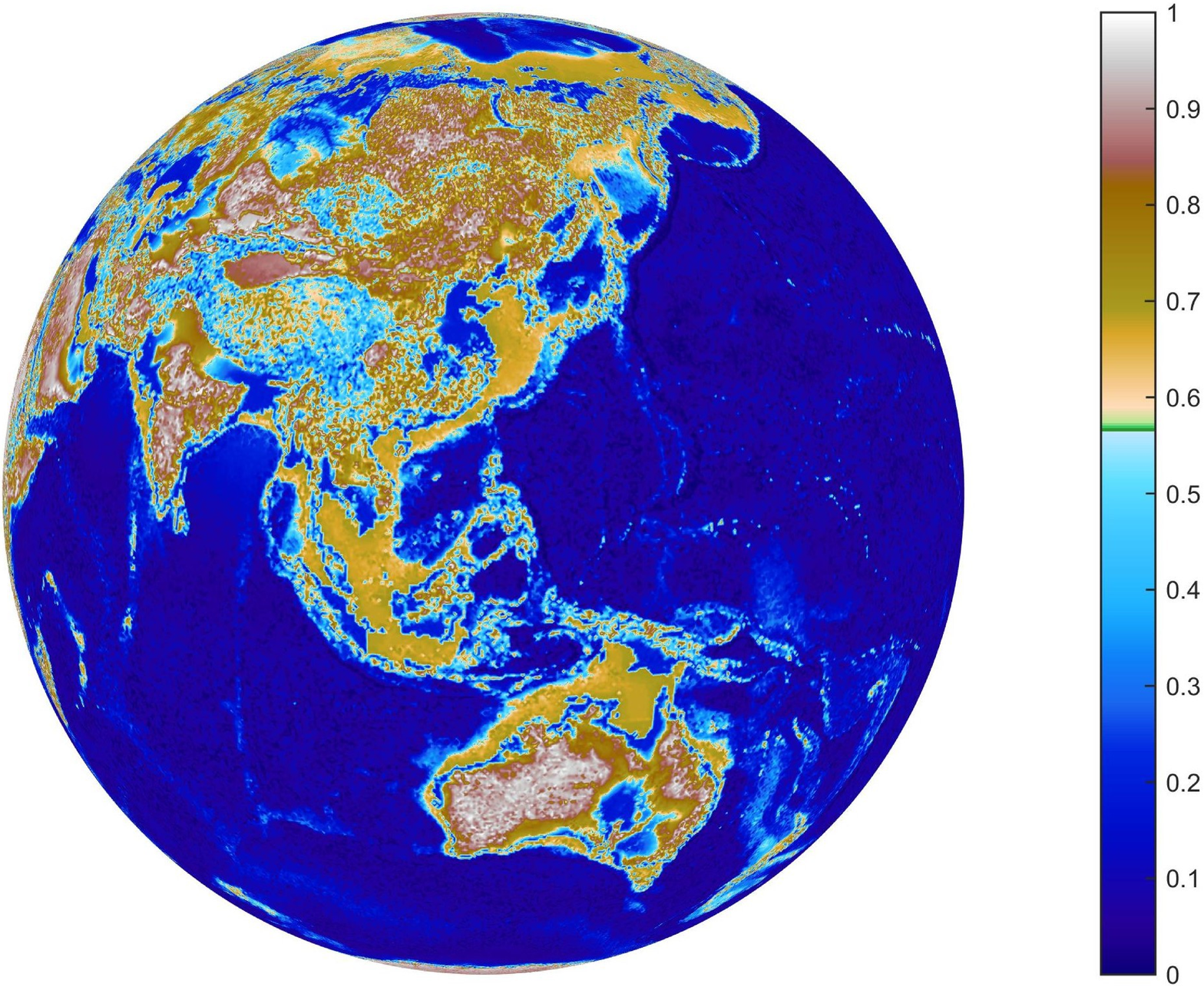}\\
  \subcaption{Projection term $\fracoev[9]$}\label{fig:tpg.fmt}
  \end{minipage}
\begin{minipage}{0.31\textwidth}
\centering
  \includegraphics[trim = 0mm 0mm 0mm 1mm, width=0.9\textwidth]{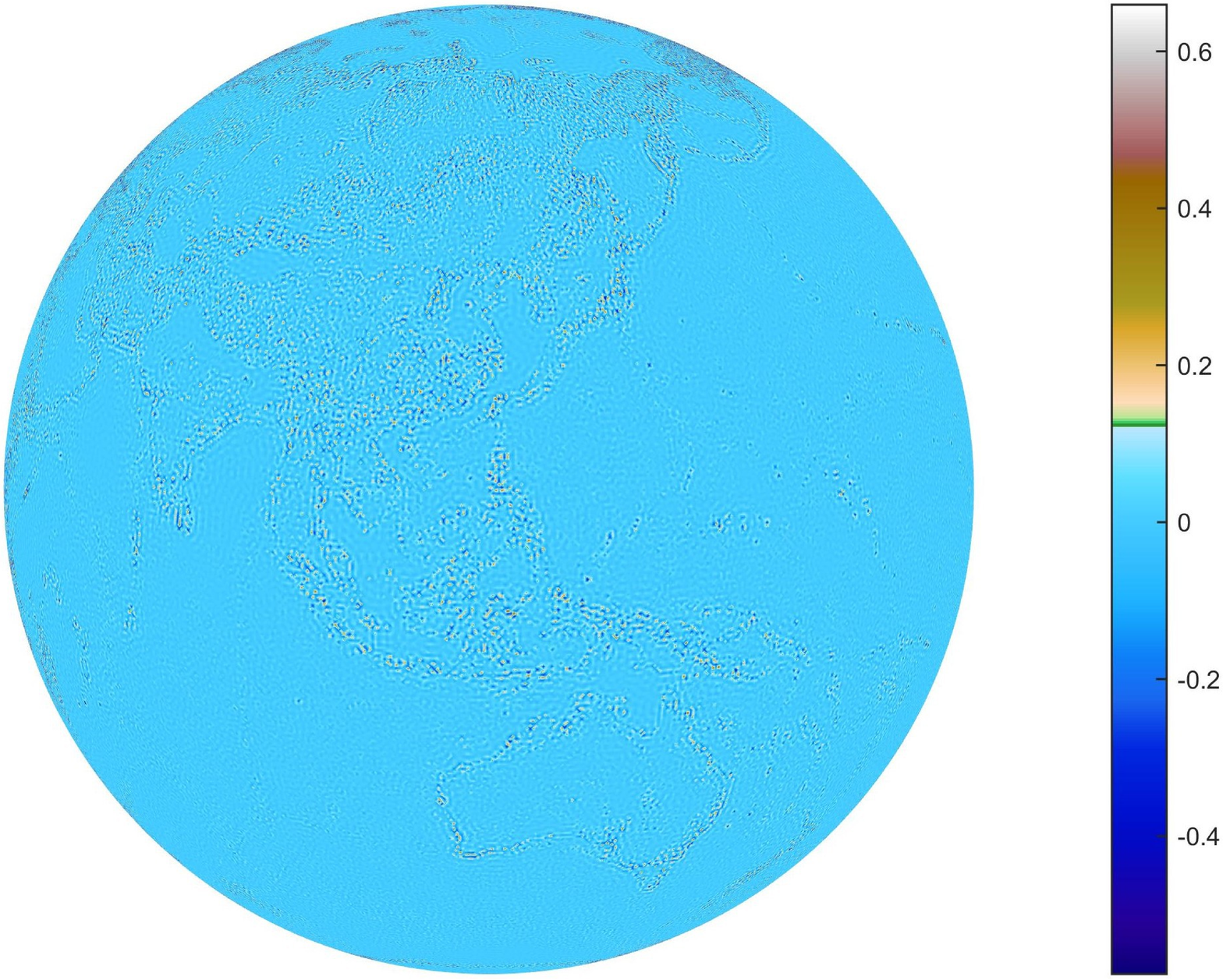}\\
  \subcaption{Error term $\frbcoev[9]{}$}\label{fig:tpg.err}
  \end{minipage}
\end{minipage}
\begin{minipage}{\textwidth}
\vspace{2mm}
\centering
  \begin{minipage}{0.31\textwidth}
  \centering
  \includegraphics[trim = 0mm 0mm 0mm 1mm, width=0.9\textwidth]{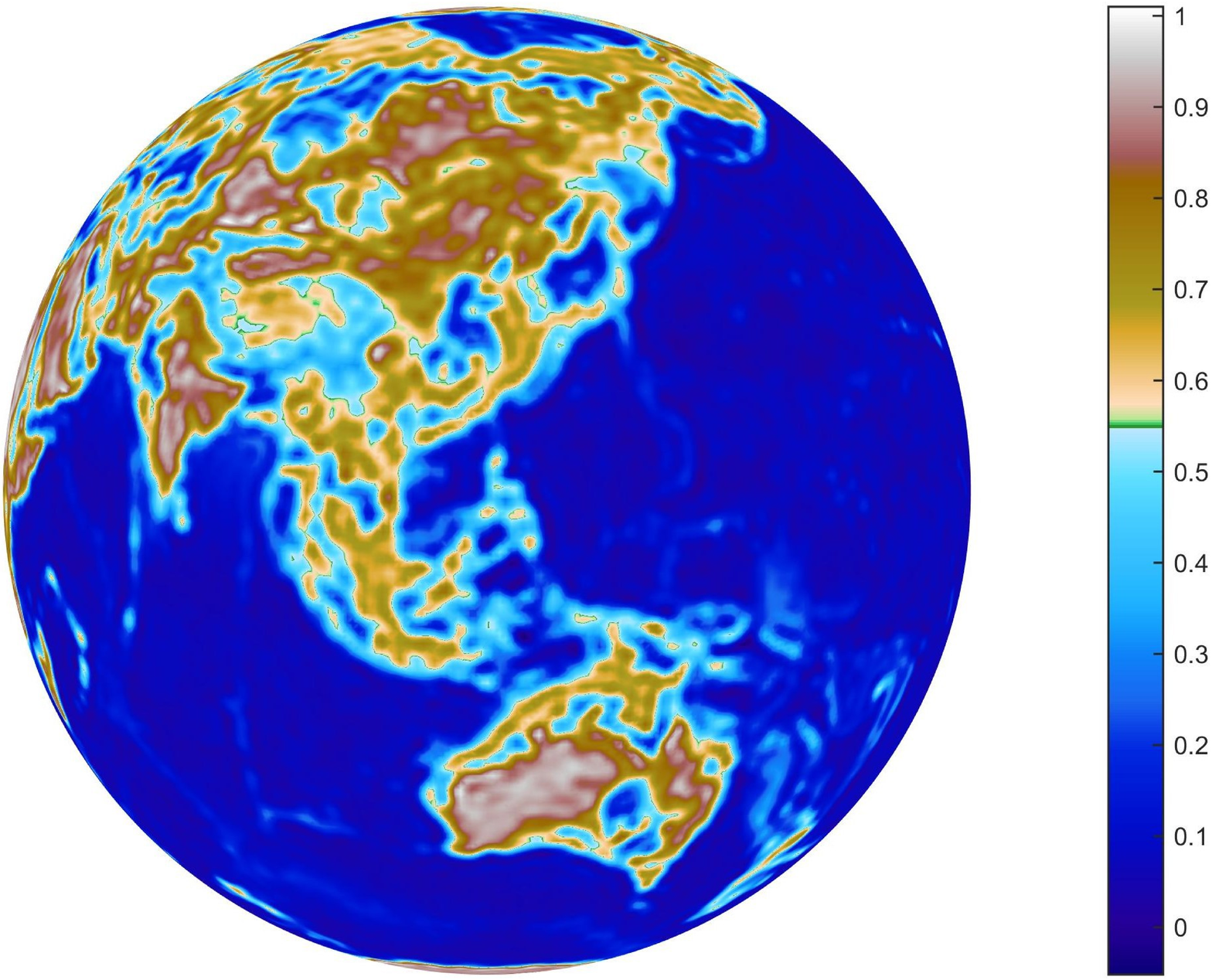}\\
  \subcaption{Approximation $\fracoev[8]$}\label{fig:tpg.v8}
  \end{minipage}
  \begin{minipage}{0.31\textwidth}
  \centering
  \includegraphics[trim = 0mm 0mm 0mm 1mm, width=0.9\textwidth]{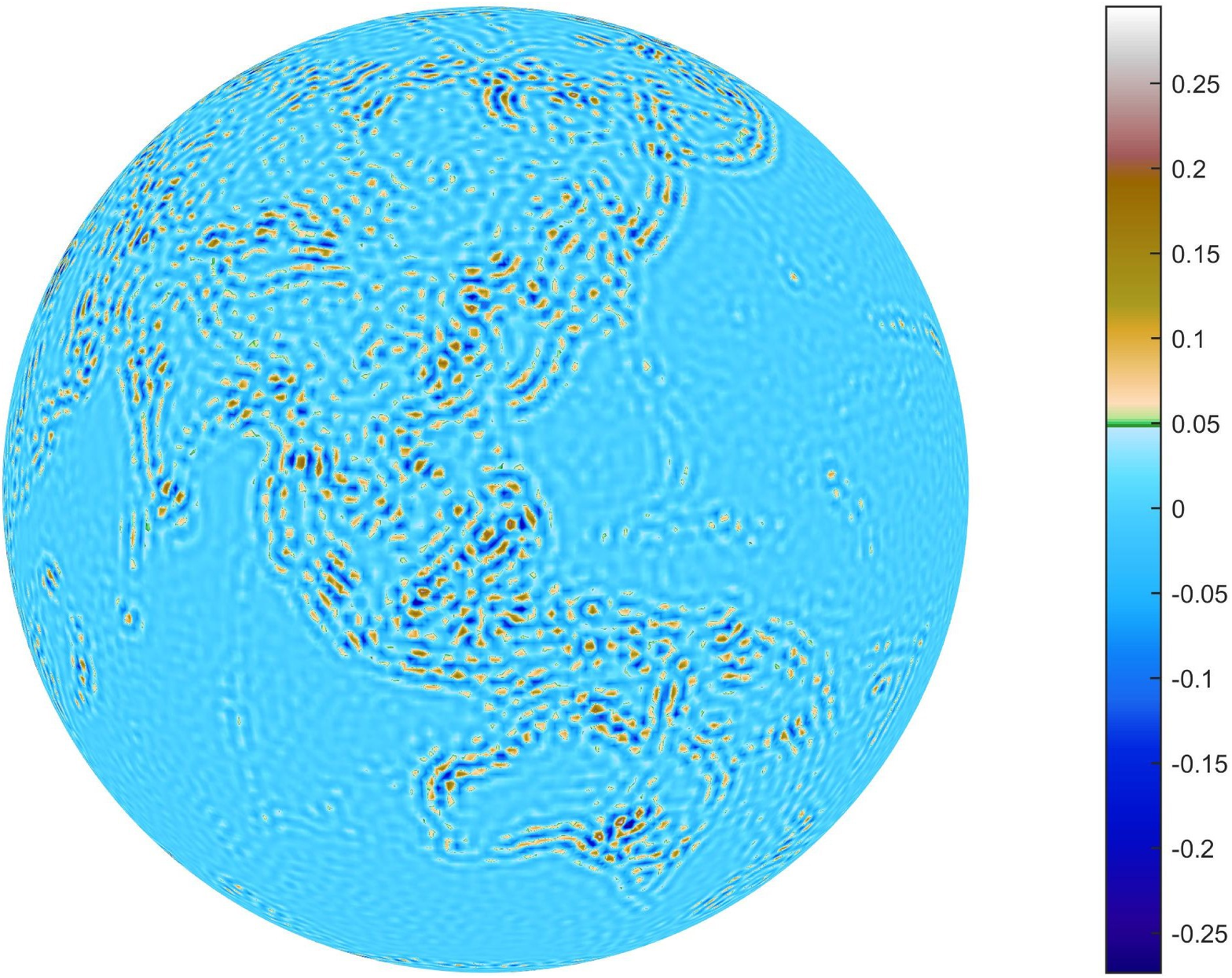}\\
  \subcaption{Detail $\frbcoev[8]{1}$}\label{fig:tpg.w8.1}
  \end{minipage}
\begin{minipage}{0.31\textwidth}
\centering
  \includegraphics[trim = 0mm 0mm 0mm 1mm, width=0.9\textwidth]{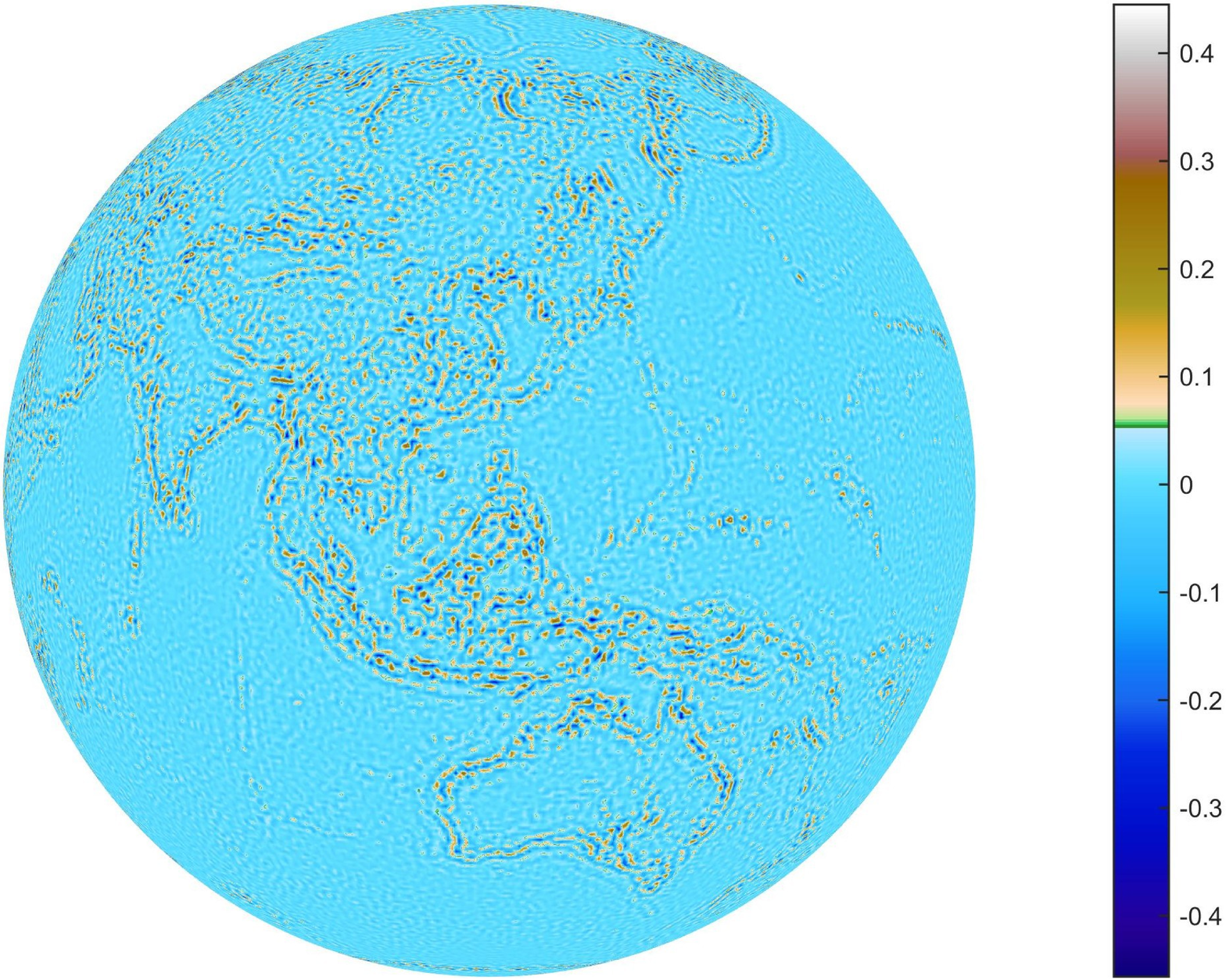}\\
  \subcaption{Detail $\frbcoev[8]{2}$}\label{fig:tpg.w8.2}
  \end{minipage}
\end{minipage}
\begin{minipage}{\textwidth}
\vspace{2mm}
\centering
  \begin{minipage}{0.31\textwidth}
  \centering
  \includegraphics[trim = 0mm 0mm 0mm 1mm, width=0.9\textwidth]{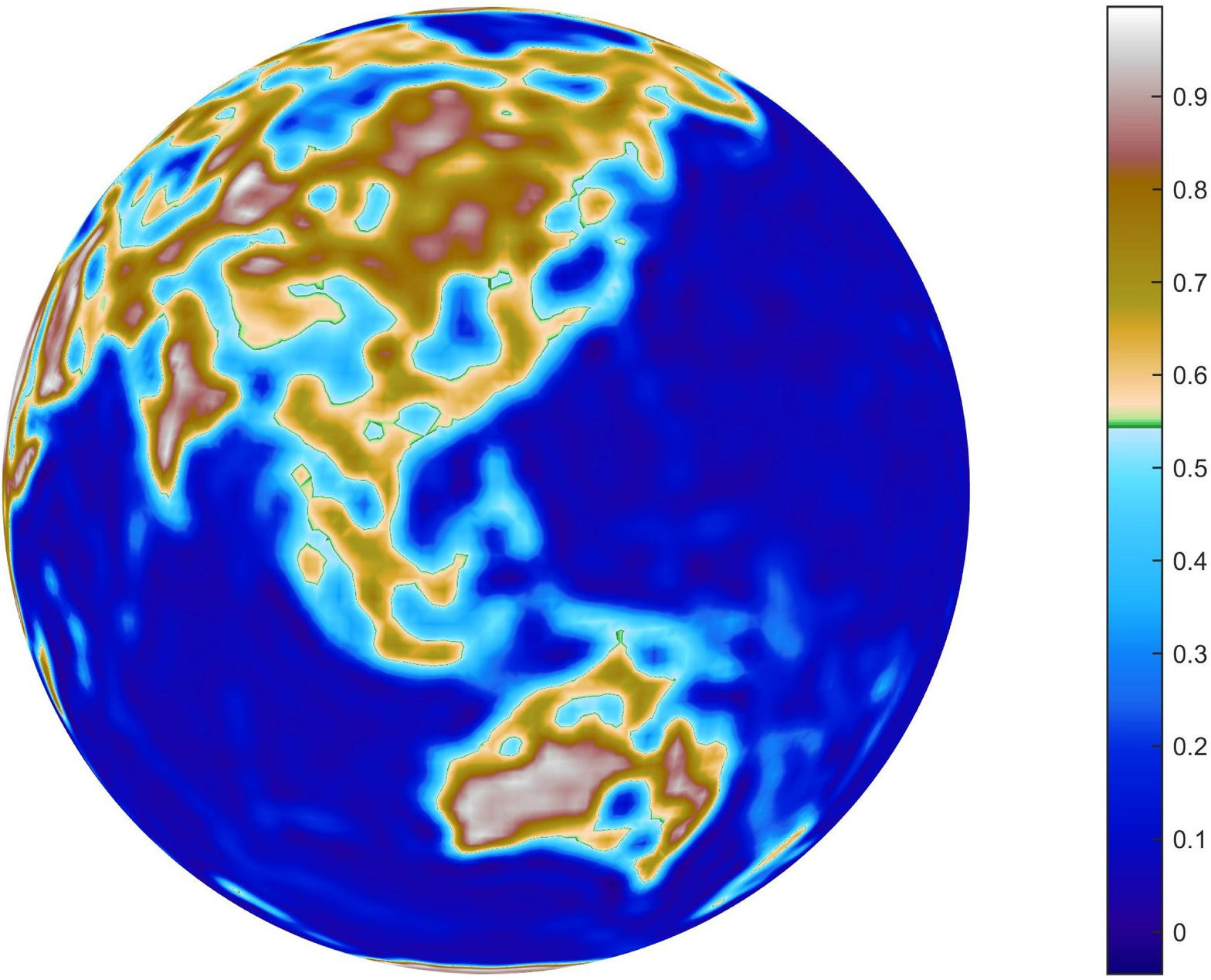}\\
  \subcaption{Approximation $\fracoev[7]$}\label{fig:tpg.v7}
  \end{minipage}
  \begin{minipage}{0.31\textwidth}
  \centering
  \includegraphics[trim = 0mm 0mm 0mm 1mm, width=0.9\textwidth]{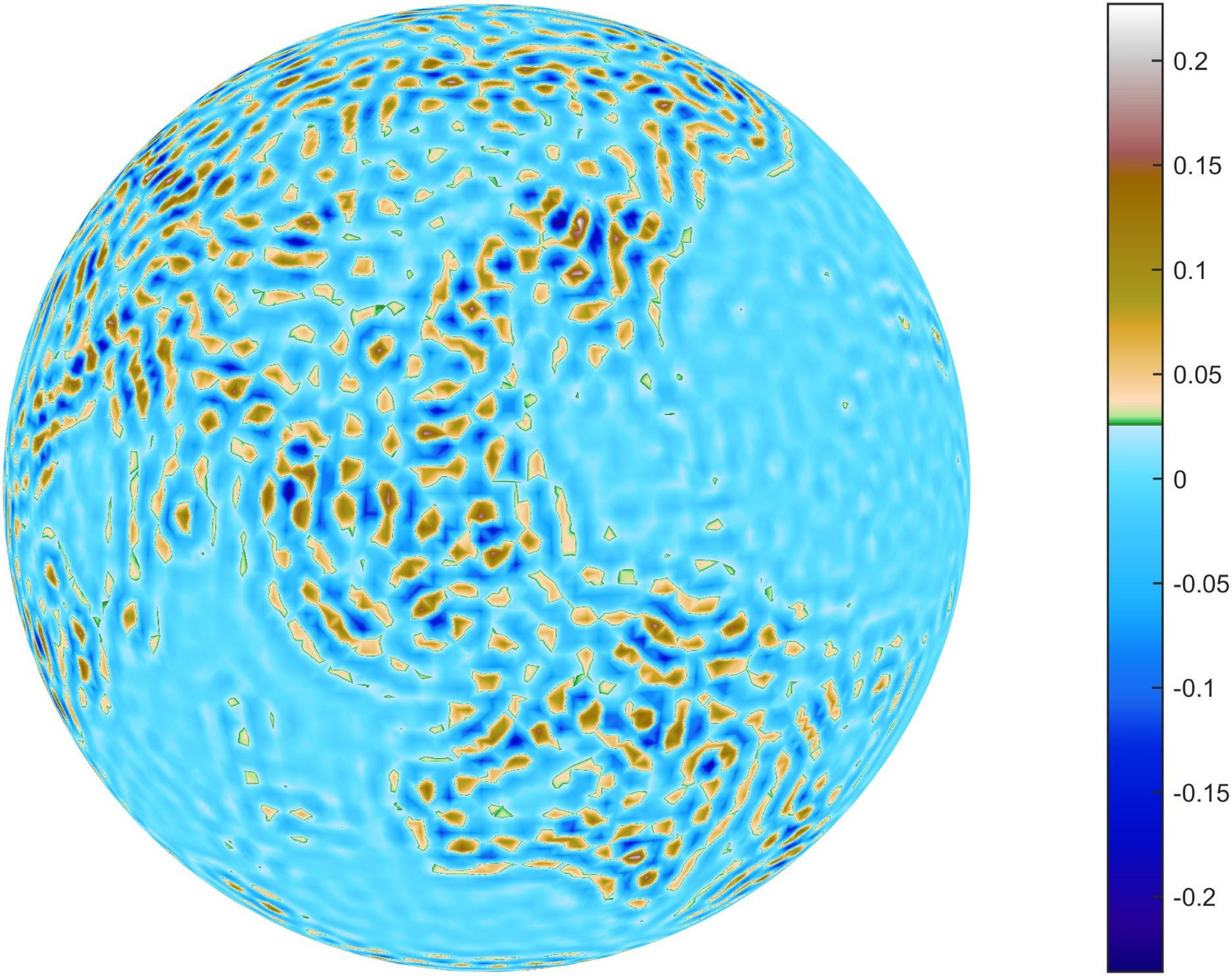}\\
  \subcaption{Detail $\frbcoev[7]{1}$}\label{fig:tpg.w7.1}
  \end{minipage}
\begin{minipage}{0.31\textwidth}
\centering
  \includegraphics[trim = 0mm 0mm 0mm 1mm, width=0.9\textwidth]{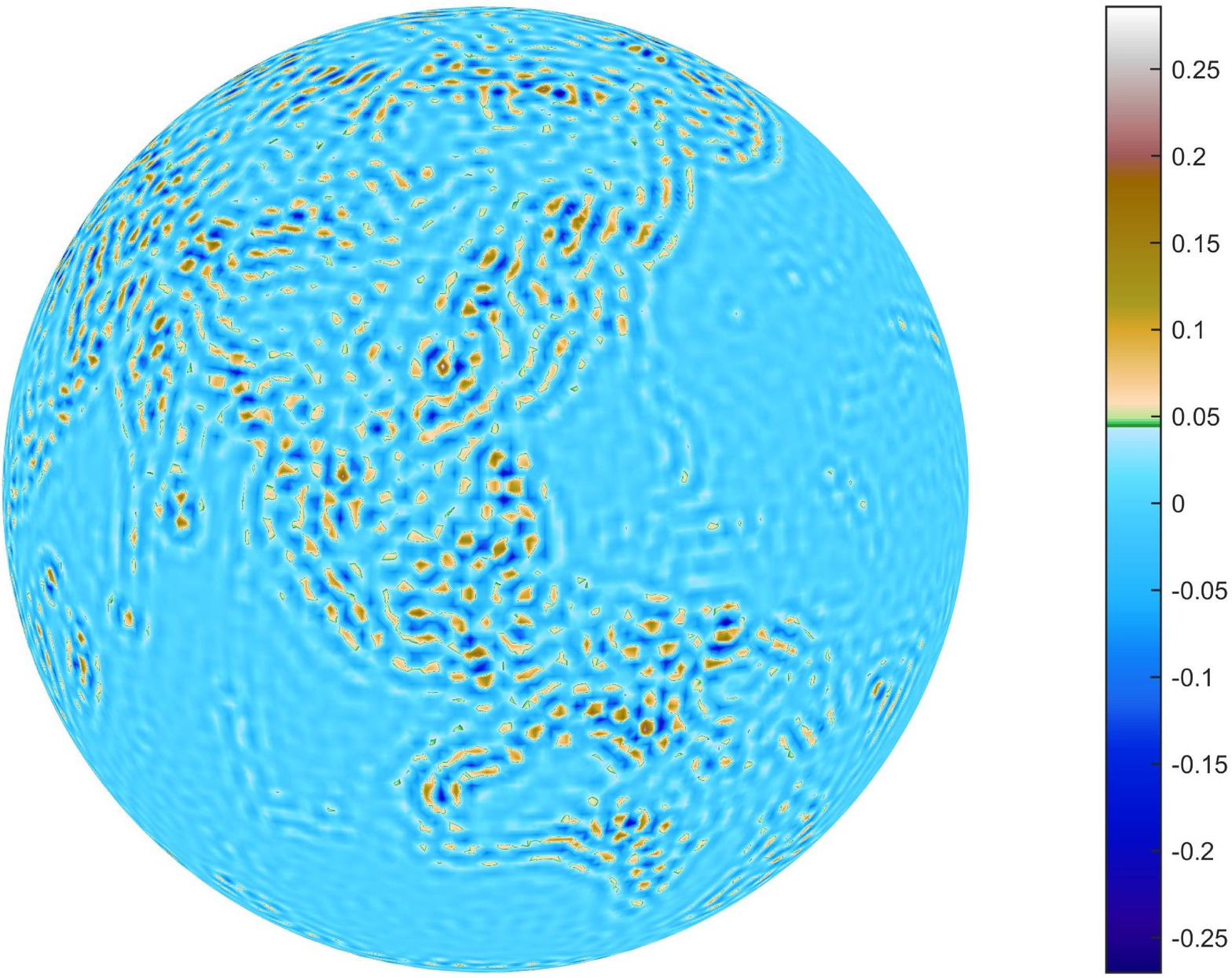}\\
  \subcaption{Detail $\frbcoev[7]{2}$}\label{fig:tpg.w7.2}
  \end{minipage}
\end{minipage}
\end{minipage}
\begin{minipage}{0.8\textwidth}
	\vspace{2mm}
\caption{Multiscale decomposition of the ETOPO1 data $\fracoev[]$ on GL rule $\QN[N_J]$ with $N_J= 523,776$ and $J=9$. $\fracoev[] = \fracoev[9]+\frbcoev[9]{}$ (top row). The projection term  $\fracoev[9]$ is decomposed as $\fracoev[8]+\frbcoev[8]{1}+\frbcoev[8]{2}$ (middle row), and the approximation $\fracoev[8]$ is further decomposed as $\fracoev[7]+\frbcoev[7]{1}+\frbcoev[7]{2}$ (bottom row).}
\label{fig:tpg:GL}
\end{minipage}
\end{minipage}
\end{figure}
}
\end{example}

\begin{example}[Computational complexity]
\label{ex:3}
{\rm
In this example, we use the CMB data set (see Figure~\ref{fig:data}) to illustrate the computational efficiency of the {\fmt} algorithm. The CMB data are collected by Plank at HEALPix (HL) points of resolution $2^{10}=1024$ with $12\times(2^{10})^2=12,582,912$ nodes, see \cite{Planck2015IX}.

The sequence of HL point sets $\QN[N_j]$ for $j=0,1,\ldots,10$ corresponds to a sequence $\{\frsys[j](\Psi)\}_{j={\ord[0]}}^{J}$ of framelet systems with ${\ord[0]}\ge0$ and with $J$ up to $10$. To illustrate the near linearity of the computational complexity for the {\fmt} algorithms, we fix ${\ord[0]}=0$ and change $J$ from $1$ to $10$. At level $J$, we use the CMB data at the nodes of $\QN[N_J]$, $N_J = 12\times (2^J)^2$ as the data sequence $\fracoev[\ord]$.

For each $J\in\{1,2,\ldots,10\}$, we test the total time, the decomposition time and the reconstruction time of the {\fmt}s described in Algorithms~\ref{algo:decomp.multi.level} and \ref{algo:reconstr.multi.level} associated with $\{\frsys[j](\Psi)\}_{j={\ord[0]}}^{J}$ for the data set $\fracoev[\ord]$. The results are reported in Table~\ref{tab:t.N.S2.FMT.CMB.HL}, where the numbers inside the brackets are the ratios of the time at level $\ord$ to the time at level $\ord-1$, $2\le\ord\le10$. From the ratios in Table~\ref{tab:t.N.S2.FMT.CMB.HL}, we observe that the computational time (decomposition, reconstruction or total CPU time) grows almost linearly with respect to the size of the data. This illustrates that the computational steps of {\fmt} are proportional to the size of the input data $\fracoev[\ord]$, which is consistent with the analysis in Section~\ref{sec:fmt}.

\begin{table}[htb]
\centering
\begin{minipage}{\textwidth}
\centering
\scriptsize
\begin{tabular}{l*{12}{c}c}
\toprule
$\ord$   &   1       &   2       &   3       &   4     &  5      &   6     &   7    &   8    & 9   & 10 \\
\midrule
 $N_J$    &   48       &     192   &     768   &    3,072   &   12,288   &   49,152   &   196,608   &   786,432 & 3,145,728  & 12,582,912 \\

 $t$    &   0.007 &0.022 (3.2) &0.048 (2.2) &0.11 (2.2) &0.28 (2.6) &1.07 (3.8) &4.39 (4.1) &20.9 (4.8) &102.4 (4.9)  & 569.8 (5.6)\\

 $t_{\mathrm{de}}$  & 0.003 &0.010 (3.1) &0.021 (2.1) &0.04 (2.0) &0.10 (2.4) &0.35 (3.3) &1.33 (3.8) &5.86 (4.4) &26.8 (4.6) & 129.9 (4.8) \\

 $t_{\mathrm{re}}$  &    0.003 &0.012 (3.4) &0.027 (2.3) &0.06 (2.4) &0.18 (2.8) &0.72 (4.1) &3.06 (4.2) &15.0 (4.9) &75.5 (5.0) & 439.9 (5.8) \\
\bottomrule
\end{tabular}
\end{minipage}
\begin{minipage}{0.9\textwidth}
\vspace{3mm}
\caption{{\fmt} CPU time v.s. number of input for CMB data (rounded).  Quadrature rules in $\{\QN\}_{j={\ord[0]}}^J$ are the HEALPix points with equal weights,  ${\ord[0]}=0$, and $1\le J\le 10$. The row of $t_{\mathrm{de}}$ is the CPU time of decomposition, the row of $t_{\mathrm{re}}$ is the CPU time of reconstruction and the row of $t=t_{\mathrm{de}}+t_{\mathrm{re}}$ is the total CPU  time.
The numbers inside brackets are the ratios $\frac{t(N_{J})}{t(N_{J-1})}$ (or $\frac{t_{\mathrm{de}}(N_{J})}{t_{\mathrm{de}}(N_{J-1})}$ or $\frac{t_{\mathrm{re}}(N_{J})}{t_{\mathrm{re}}(N_{J-1})}$) of  CPU time $t(N_J)$ (or $t_{\mathrm{de}}(N_{J})$ or $t_{\mathrm{re}}(N_{J})$) of level $J$ to the CPU time $t(N_{J-1})$ of level $J-1$. The numerical test is run under Intel Core i7 CPU @ 3.4GHz with 32GB RAM in OS X EI Capitan.
 }
\label{tab:t.N.S2.FMT.CMB.HL}
\end{minipage}
\end{table}

\begin{figure}[htb]
\begin{minipage}{\textwidth}
  \centering
  \begin{minipage}{0.37\textwidth}
  \centering
  \includegraphics[trim = 0mm 0mm 0mm 0mm, width=7cm]{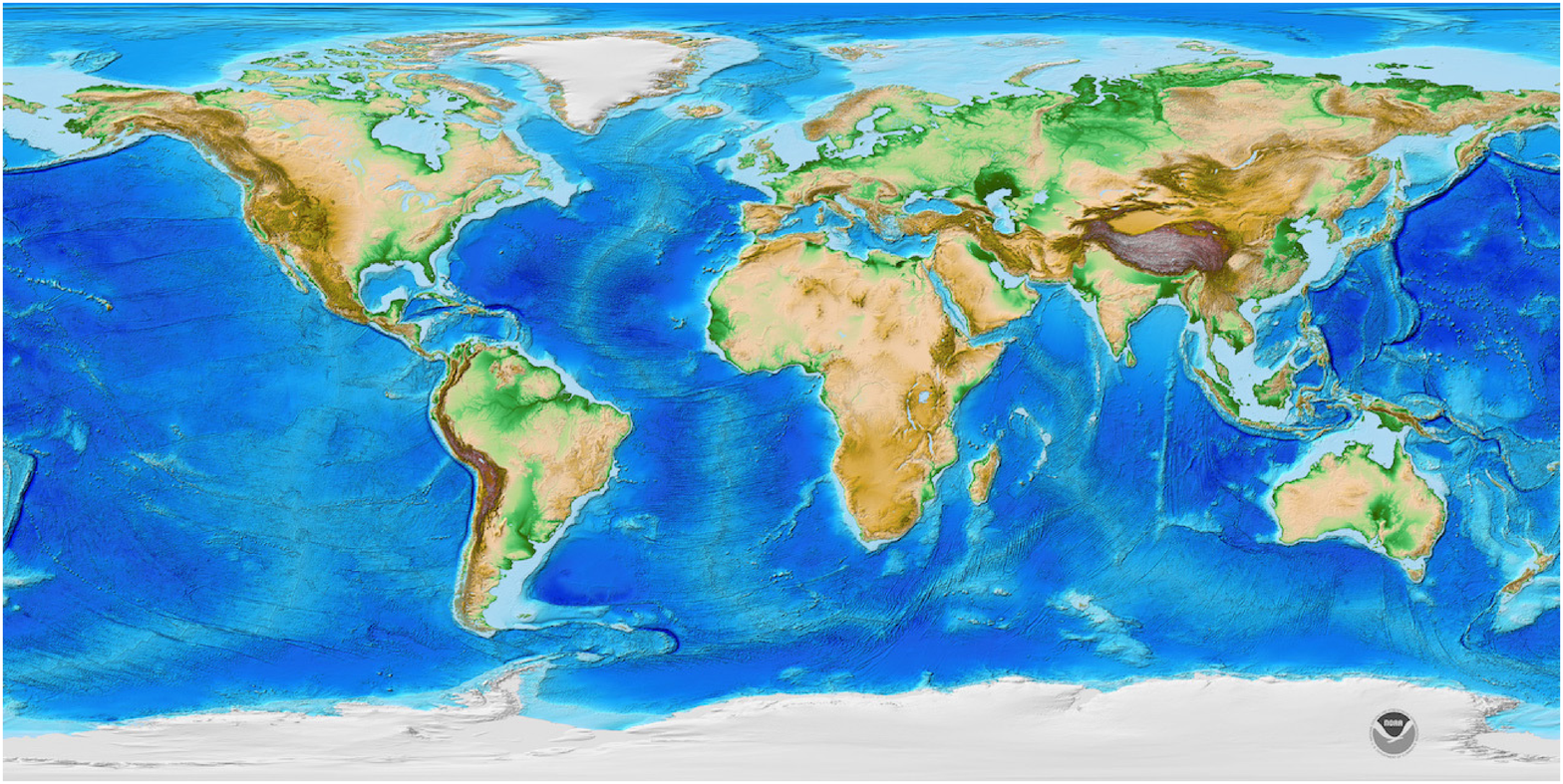}\\[1mm]
  \end{minipage}
  \begin{minipage}{0.05\textwidth}
  \centering
  \hspace{.3\textwidth}
  \end{minipage}
  \begin{minipage}{0.37\textwidth}
  \centering
  \includegraphics[trim =0mm 0mm 0mm 0mm, width=5cm]{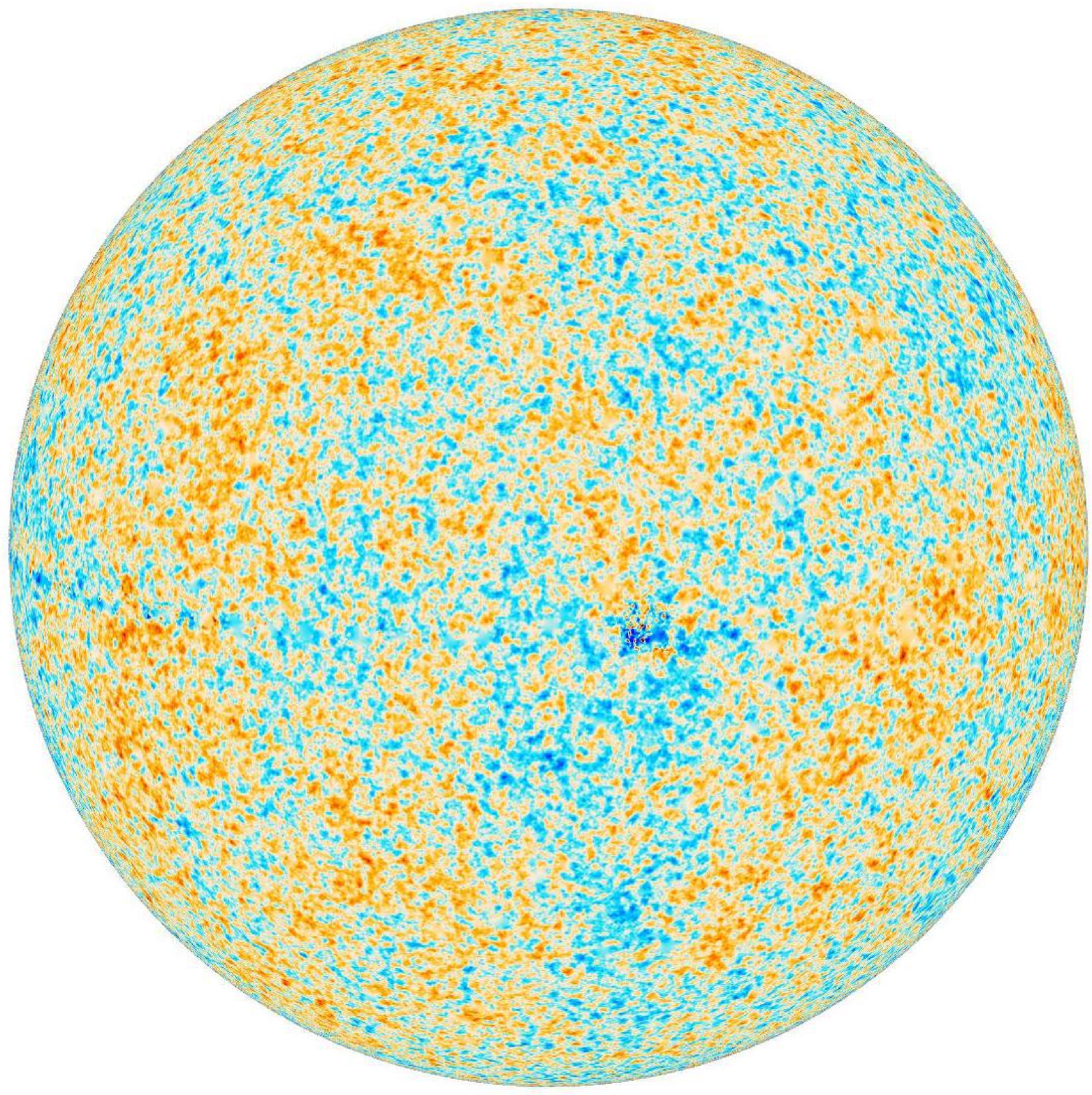}\\[1mm]
  \end{minipage}
\end{minipage}
\caption{ETOPO1 data (left) and CMB data (right)}\label{fig:data}
\end{figure}
}
\end{example}

\section{Final remarks}
\begin{enumerate}[1)]
\item Besides the orthogonal polynomials (Fourier domain) approach, the usual time domain approach and the group-theoretical approach are other two widely used approaches. In the usual time domain approach, wavelets are restricted to intervals, squares, cubes or regular domains in higher dimensions. For instance, Cohen, Daubechies and Vial \cite{CoDaVi1993} constructed the wavelets on a compact interval similar to wavelets on $\R$ by carefully handling  generators on the boundary. Using the lifting schemes, Sweldens \cite{Sweldens1998} constructed wavelets on irregular domains including compact intervals and surfaces.  Continuous wavelets on the two-dimensional sphere $\sph{2}$ can also be achieved using the group of rotations $SO(3)$, as shown by Freeden, Gervens and Schreiner \cite{FrGeSc1998} and Antonio and Vandergheynst et al. \cite{AnVa1998,AnVa1999}.

\item  In high-dimensional (big) data analysis such as in denoising and inpainting of image and video, in order to avoid the boundary effect when estimating convolution with a filter, one usually exploits symmetric extension and periodization techniques for the data. The data can then be regarded as samples on the torus $\mathbb{T}^d$ for which the convolution in time domain are implemented by the classical discrete Fourier transforms in the frequency domain. When data are sampled from the regular integer grid, the framelet filter bank transforms reduce to the classical framelet filter bank transforms in $\Rd$, see e.g. \cite{HaZhZh2016}, and the fast discrete Fourier transforms (FFT) are used for the fast implementation of framelet transforms. Multiscale analysis of data sampled at the regular integer grid has been widely used in inpainting, denoising, debluring, segmentation and so on.
For data sampled at irregular grids, the nonequispaced fast Fourier transforms \cite{Ku2006} provide an algorithmic realization of fast framelet transforms in our setting for framelet systems on $\Lpm[\mathbb{T}^{d}]{2}$, which is also an active area  of ``irregular sampling'' or ``non-uniform sampling''. Besides, it is possible and promising to use lattice rules and QMC designs with low-discrepancy, see e.g. \cite{DiKuSl2013,DiPi2010,SlJo1994}, to construct framelets in a high-dimensional torus.

\item  The polynomial-exact quadrature rule simplifies the conditions and implementation for tight framelets. The choice of quadrature rules for tight framelets in  Theorem~\ref{thm:framelet.tightness} is in fact rather general. One can also consider non-polynomial-exact quadrature rules satisfying one of equivalence conditions (iv) and (v) in Theorem~\ref{thm:framelet.tightness}. These would bring flexibility when designing tight framelet systems on manifolds, as there are many quadrature rules with good geometric property and good approximation for numerical integration without the requirement of polynomial exactness, see e.g. QMC designs on the two-dimensional sphere \cite{BrSaSlWo2014}, minimal energy points on a compact manifold \cite{HaSa2004}. Investigation into the construction of tight framelets and tight framelet filter banks for a manifold with these quadrature rules (not exact for polynomials) is significant.

\item In this paper, we assume $\mfd$ a compact Riemannian manifold and $\{(\eigfm,\eigvm)\}_{\ell=0}^{\infty}$ an orthonormal eigen-pair for $\Lpm{2}$. In fact, our results can be extended to a more general setting, for example, metric measure spaces \cite{MaMh2008}, graphs, meshes, which we will report elsewhere.

\item In the paper, the continuous framelets $\boldsymbol{\varphi}_{j,y}$ and $\boldsymbol{\psi}_{j,y}^n$ as well as the inner product for $L_2(\mfd)$ can be complex-valued. In implementation, taking square-root does not affect the numerical results of the algorithm for framelet systems. On the other hand, to avoid the square-root of negative numbers, one can simply move the square-root of the weights from the {\fmt} decomposition to the {\fmt} reconstruction. In this way, weights are computed in one step without splitting $\omega_{j,k}$ to $\sqrt{\omega_{j,k}}\cdot\sqrt{\omega_{j,k}}$. Such treatment of weights as well as the further relaxation on the filter banks can be done (in both theory and practice) by using dual framelets.

\item Directional wavelets on the sphere based on group representations are given by \cite{Ig2017,WiMcVaBl2008}. We do not consider directional sensitivity of the framelet systems. It is desirable to incorporate directionality into our framelets on manifolds.

\end{enumerate}

\section*{Acknowledgements}
The authors thank anonymous reviewers for their valuable comments and suggestions that help the improvement of the quality of this paper.
The authors thank Ian H. Sloan, Robert S. Womersley and Quoc T. Le Gia for their helpful comments on the paper.
Some of the results in this paper have been derived using the HEALPix \cite{Gorski_etal2005}.


{\footnotesize 

}

\end{document}